\documentclass[11pt,a4paper]{article}

\usepackage{epsf,epsfig,amsfonts,amsgen,amsmath,amstext,amsbsy,amsopn,amsthm}
\usepackage{amsmath,times,mathptmx}
\usepackage{amsfonts,amsthm,amssymb}
\usepackage{amsfonts}
\usepackage{graphics}
\usepackage{latexsym,bm}
\usepackage{amsfonts,amsthm,amssymb,bbding}
\usepackage{indentfirst}
\usepackage{graphicx}
\usepackage{color}
\usepackage[colorlinks=true,anchorcolor=blue,filecolor=blue,linkcolor=blue,urlcolor=blue,citecolor=blue]{hyperref}
\usepackage{float}
\usepackage{tikz,enumerate}
\setlength{\textwidth}{150mm} \setlength{\oddsidemargin}{7mm}
\setlength{\evensidemargin}{7mm} \setlength{\topmargin}{-5mm}
\setlength{\textheight}{245mm} \topmargin -18mm

\pagestyle{myheadings} \markright{} \textwidth 150mm \textheight 235mm \oddsidemargin=1cm
\evensidemargin=\oddsidemargin\topmargin=-1.5cm

\newtheorem{thm}{Theorem}[section]
\newtheorem{prob}{Problem}[section]

\newtheorem{lem}{Lemma}[section]

\newtheorem{pro}{Proposition}[section]

\newtheorem{remark}{Remark}[section]

\newtheorem{claim}{Claim}[section]
\newtheorem{definition}{Definition}[section]

\addtocounter{section}{0}

\begin{document}
\title{Extremal eigenvalues with respect to graph minors\footnote{Supported by
the National Natural Science Foundation of China (Nos. 12571369, 12271162).}}
\author{{\bf Mingqing Zhai$^a$}, {\bf Longfei Fang$^{b,c}$},
{\bf Huiqiu Lin$^{b}$}\thanks{Corresponding author: huiqiulin@126.com
(H. Lin)}
\\
\small $^{a}$ School of Mathematics and Statistics, Nanjing University of Science and Technology, \\
\small   Nanjing, Jiangsu 210094, China\\
\small $^{b}$ School of Mathematics, East China University of Science and Technology, \\
\small  Shanghai 200237, China \\
\small $^{c}$ School of Mathematics and Finance, Chuzhou University, Chuzhou, Anhui 239012, China
}

\date{}
\maketitle
\begin{abstract}
Minors play a crucial role in various branches of graph theory,
including structural graph theory, extremal graph theory, and topological graph theory, and have garnered significant interest in these areas.
This paper explores the maximal spectral radius, denoted $spex(n,\mathbb{H}_{minor})$, 
of $n$-vertex graphs that exclude any graph from a fixed family $\mathbb{H}$ as a minor. 

We derive the asymptotic value for $spex(n,\mathbb{H}_{minor})$ and establish a unified stable structure for extremal graphs
by introducing a novel application of the absorbing method to eigenvalue analysis, along with models and partitions
with respect to a general $H$ minor.
In particular, we prove three central theorems, the most fundamental of which asserts that 
every graph with spectral radius
$\rho\geq spex(n,\{H\}_{minor})$
contains either an $H$ minor or a spanning book
$B_{\gamma_H,n-\gamma_H}$,
where $\gamma_H\!=\!|H|\!-\!\alpha_H\!-\!1$ and $\alpha_H$ is the independence number of $H$.

These three theorems, combined with detailed combinatorial analysis, enable us to
determine $spex(n,\{H\}_{minor})$ for every complete $r$-partite graph $H$. 
This extends the result of Tait for $spex(n,\{K_r\}_{minor})$ and 
provides a stronger solution to his conjecture for $spex(n,\!\{K_{s,t}\}_{minor})$ [J. Combin. Theory Ser. A 166, 2019].
Additionally, these theorems imply or strengthen other existing eigenvalue-extremal results on minors,
such as planar graphs by Tait and Tobin, $K_r\!-\!E(H)$ minors by Chen, Liu and Zhang,
and friendship graph minors by He, Li and Feng.

\medskip

\noindent {{\bf Keywords:}
graph minor; spectral radius; stability method; absorbing method}

\noindent {\bf AMS\,\,\,Classification:} 05C50; 05C35.
\end{abstract}

\section{Introduction}
Over the past few decades,
research on the spectra of graphs---particularly the eigenvalues of 
their adjacency matrices---has been intensively studied.
Notable contributions include the work of  Alon \cite{Alon}, Bollob\'{a}s, Lee and Letzter \cite{BLL}, Bollob\'{a}s and Nikiforov \cite{BN},
Hoory, Linial and Widgerson \cite{Hoory}, Huang \cite{Huang},
Jiang \cite{Jiang}, Jiang, Tidor, Yao, Zhang and Zhao \cite{Jiang1}, Lubetzky, Sudakov and Vu \cite{LSV}, Tait \cite{Tait}, Tait and Tobin \cite{Tait1}, Wilf \cite{WILF}.

Fix a simple graph $H$. 
We call $H$ a minor of a graph $G$ if $H$ can be obtained from $G$ through a sequence of vertex deletions, edge deletions, and edge contractions.
Given a graph family $\mathbb{H}$, 
a graph is said to be $\mathbb{H}$-minor-free if it does not contain any member of $\mathbb{H}$ as a minor.
Minors play an important role in graph theory.
For a detailed background, refer to the survey by Robertson and Seymour \cite{RS}.
Wagner's theorem states that a graph is planar if and only if it is $\{K_5,K_{3,3}\}$-minor-free (see \cite{WAGNER}).
Planar graphs have been extensively studied, 
with profound findings in areas such as curvatures \cite{Hua1,Gh,Hi,Hua2,Hua3}, automorphism groups \cite{Ba,Con,Ma,Se}, partitions \cite{Cha,Ding,Ked},
and eigenvalues \cite{BOOT,CAO,CP,DM,EZ,Hong1,Hong2,Lin,Tait1}.

As a generalization of planar graphs, 
investigating problems on $K_r$-minor-free or $K_{s,t}$-minor-free graphs has garnered significant interest.
Mader \cite{Mader1} proved that $B_{r-2,n-r+2}$ has the maximum number of edges among
all $K_r$-minor-free graphs on $n$ vertices when $r\leq7$;
however, this result is not optimal for $r>7$.
In 1943, Hadwiger \cite{Hadwiger} conjectured that every $K_r$-minor-free graph is $(r-1)$-colorable for all integers $r\geq 1$.
During the 1980s, Kostochka \cite{Kos,Kos1} and Thomason \cite{Thomason1} independently proved that the maximum number of edges in a $K_r$-minor-free graph $G$
is $\Theta(r\sqrt{log r}\cdot n)$ for sufficiently large $r$,
which implies that $G$ is $O(r\sqrt{\log r})$-colorable.
In 2001, Thomason \cite{Thomason2} determined the asymptotic value of this edge-extremal function.
Recently, Alon, Krivelevich and Sudakov \cite{Alon1} provided a concise and self-contained proof of the Kostochka-Thomason bound.
Norin, Postle and Song \cite{NPS} showed that 
every $K_r$-minor-free graph is $O(r(\log r)^\beta)$-colorable for $\beta>1/4$.
Regarding $K_{s,t}$-minor-free graphs,
Ding, Johnson, and Seymour \cite{DJS} demonstrated that
every connected $K_{1,t}$-minor-free graph $G$ satisfies
$e(G)\leq{t\choose 2}+n-t$.
Chudnovsky, Reed and Seymour \cite{CRS} proved that
$e(G)\leq\frac12(t+1)(n-1)$ for every $K_{2,t}$-minor-free graph $G$,
thereby confirming a conjecture by Myers \cite{MY}.

Let $spex(n,\mathbb{H}_{minor})$ denote the maximal spectral radius of an
$n$-vertex $\mathbb{H}$-minor-free graph.
In the early 1990s, Boots and Royle \cite{BOOT}, and independently Cao and Vince \cite{CAO},
posed a conjecture regarding $spex(n,\{K_5,K_{3,3}\}_{minor})$.
In 2019, Tait and Tobin \cite{Tait1} resolved this long-standing conjecture for sufficiently large $n$.
Tait \cite{Tait} later determined $spex(n,\{K_r\}_{minor})$
and established a sharp upper bound for $spex(n,\{K_{s,t}\}_{minor})$.
Subsequently, Zhai and Lin \cite{ZL} determined the exact value of $spex(n,\{K_{s,t}\}_{minor})$.
In this paper, we address the following fundamental problem.

\begin{prob}\label{prob1.1}
For a graph $H$ (or a graph family $\mathbb{H}$),
what is the maximal spectral radius of all
$H$-minor-free (or $\mathbb{H}$-minor-free) graphs of order $n$?
\end{prob}

A \emph{book}, denoted by $B_{\gamma,n-\gamma}$, is obtained
by joining each vertex of a $\gamma$-clique to every vertex of an independent set of size $n-\gamma$.
For a given graph family $\mathbb{H}$, we define 
\begin{eqnarray}\label{align.01}
\gamma_H=|H|-\alpha_H-1~~
\mbox{and}~~\gamma_\mathbb{H}:=\min_{H\in \mathbb{H}}\gamma_H.
\end{eqnarray}
where $\alpha_H$ denotes the independence number of the graph $H$.

{\flushleft\textbf{Notation.}}
Throughout this paper,
we assume that $n$ is sufficiently large and that
$\mathbb{H}$ contains no member isomorphic to a star.
Consequently, $\gamma_\mathbb{H}\geq1$.

Problem \ref{prob1.1} has garnered significant attention and has been explored by numerous researchers for specific graphs $H$ 
(see, for example, \cite{Hong3,NIKI,Tait,Tait1,WANG,ZL}).
However, a unified framework for addressing Problem \ref{prob1.1} remains elusive.
Let $SPEX(n,\mathbb{H}_{minor})$ denote the family of extremal graphs with respect to $spex(n,\mathbb{H}_{minor})$. 
Our primary result for Problem \ref{prob1.1} is as follows.

\begin{thm}\label{thm1.1}
Every graph in $SPEX(n,\mathbb{H}_{minor})$ contains a spanning book $B_{\gamma_\mathbb{H},n-\gamma_\mathbb{H}}$.
Moreover, 
$$\sqrt{\gamma_\mathbb{H} n}\!+\!\frac{\gamma_\mathbb{H}-1}{2}\!+\!O\big(\frac{1}{\sqrt{n}}\big)
\!\leq\!spex(n,\mathbb{H}_{minor})
\!\leq\!\sqrt{\gamma_\mathbb{H} n}\!+\!\frac{\alpha_\mathbb{H}+\gamma_\mathbb{H}-1}{2}\!+\!O\big(\frac{1}{\sqrt{n}}\big).$$
\end{thm}

We now introduce some new notations and terminologies.
Choose an arbitrary graph $H\in\mathbb{H}.$
We define $\Gamma_s(H)$ to be the set of $s$-vertex induced subgraphs of $H$.
A member $H[S]$ in $\Gamma_s(H)$ is said to be \emph{irreducible},
if $\Gamma_s(H)$ does not contain any member isomorphic to a proper subgraph of $H[S]$.
Now, let $\Gamma_s^*(H)$ be the subset of  $\Gamma_s(H)$ in which every member is irreducible, 
and set
$$\Gamma(\mathbb{H})=\bigcup_{H\in\mathbb{H}}\Gamma_{|H|-\gamma_\mathbb{H}}^*(H).$$
It is clear that
$\Gamma_{|H|-\gamma_\mathbb{H}}^*(H)=\Gamma_{\alpha_H+1}^*(H)$
for every $H\in\mathbb{H}$
with $\gamma_H=\gamma_\mathbb{H}.$

A graph is said to be \emph{$\mathbb{H}$-minor-saturated},
if it is $\mathbb{H}$-minor-free, but the addition of any edge results in a graph containing some member of $\mathbb{H}$ as a minor.
Let $SAT(n,\mathbb{H}_{minor})$
be the set of $n$-vertex $\mathbb{H}$-minor-saturated graphs.
Choose an arbitrary $G^*\in SPEX(n,\mathbb{H}_{minor})$,
and let $L$ be the set of dominating vertices in $G^*$.
By Theorem \ref{thm1.1}, we have $|L|=\gamma_\mathbb{H}$.
The next objective is to characterize $G^*-L$.
We now state the following two theorems.

\begin{thm}\label{thm1.2}
The induced subgraph $G^*-L\in
SAT(n-\gamma_\mathbb{H},\Gamma(\mathbb{H})_{minor})$.
Specifically, if $\mathbb{H}=\{H\}$, then $G^*-L\in SAT(n-\gamma_H,\Gamma^*_{\alpha_H+1}(H)_{minor})$.
\end{thm}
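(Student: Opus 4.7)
The plan is to reduce the question to a structural equivalence between $\mathbb{H}$-minors in $G^*$ and $\Gamma(\mathbb{H})$-minors in $G^*-L$, which uses crucially that by Theorem \ref{thm1.1} the set $L$ is a dominating clique of size exactly $\gamma_\mathbb{H}$. The central claim I would prove is: for any graph $G$ with a set $L$ of $\gamma_\mathbb{H}$ dominating vertices, $G$ contains an $\mathbb{H}$-minor if and only if $G-L$ contains a $\Gamma(\mathbb{H})$-minor. The $(\Leftarrow)$ direction is direct: given an $H[S]$-minor of $G-L$ with $H\in\mathbb{H}$ and $|S|=|H|-\gamma_\mathbb{H}$, assign the $\gamma_\mathbb{H}$ vertices of $L$ bijectively as singleton branch sets to $V(H)\setminus S$; since $L$ forms a clique that dominates everything, every edge of $H$ incident to a vertex of $V(H)\setminus S$ is automatically supplied, yielding an $H$-minor in $G$.

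For the $(\Rightarrow)$ direction, take an $H$-minor of $G$ for some $H\in\mathbb{H}$ with branch sets $(T_h)_{h\in V(H)}$, and set $S_0=\{h:T_h\cap L=\emptyset\}$. Since the branch sets are disjoint and $|L|=\gamma_\mathbb{H}$, at most $\gamma_\mathbb{H}$ branch sets can meet $L$, so $|S_0|\geq |H|-\gamma_\mathbb{H}$; the family $(T_h)_{h\in S_0}$ lies entirely in $V(G)\setminus L$ and realizes $H[S_0]$ as a minor of $G-L$. Restricting to any $S_1\subseteq S_0$ with $|S_1|=|H|-\gamma_\mathbb{H}$ keeps $H[S_1]$ a minor of $G-L$. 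If $H[S_1]$ is not irreducible in $\Gamma_{|H|-\gamma_\mathbb{H}}(H)$, the definition provides some $S_2\subseteq V(H)$ with $|S_2|=|H|-\gamma_\mathbb{H}$ whose induced subgraph $H[S_2]$ is isomorphic to a proper subgraph of $H[S_1]$, and hence is also a minor of $G-L$ with strictly fewer edges. Iterating terminates in an irreducible member of $\Gamma^*_{|H|-\gamma_\mathbb{H}}(H)\subseteq\Gamma(\mathbb{H})$ realized as a minor of $G-L$.

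With the key claim in hand, the theorem follows quickly. Since $G^*$ is $\mathbb{H}$-minor free, the claim applied to $G^*$ gives that $G^*-L$ is $\Gamma(\mathbb{H})$-minor free. For saturation, let $uv$ be any non-edge of $G^*-L$; as $L$ dominates $G^*$, the pair is also non-adjacent in $G^*$. Adding $uv$ to the connected graph $G^*$ strictly increases the spectral radius, so by the extremality of $G^*$, the graph $G^*+uv$ cannot be $\mathbb{H}$-minor free, hence contains an $\mathbb{H}$-minor. Since $L$ remains a dominating set of size $\gamma_\mathbb{H}$ in $G^*+uv$, applying the key claim to $G^*+uv$ yields that $(G^*-L)+uv$ contains a $\Gamma(\mathbb{H})$-minor, establishing saturation. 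The particular case $\mathbb{H}=\{H\}$ is then immediate from $\Gamma(\{H\})=\Gamma^*_{\alpha_H+1}(H)$. The main obstacle is the $(\Rightarrow)$ direction of the key claim, in particular verifying that the iterative reduction to an irreducible induced subgraph preserves the minor relation even though $S_2$ need not be a subset of the original $S_0$ or $S_1$; the resolution uses only that a graph isomorphic to a subgraph of a minor is itself a minor.
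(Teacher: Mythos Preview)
Your proposal is correct and follows essentially the same approach as the paper: you prove the structural equivalence that the paper isolates as Lemma~3.6 (a graph with $\gamma_\mathbb{H}$ dominating vertices is $\mathbb{H}$-minor free iff removing those vertices leaves a $\Gamma(\mathbb{H})$-minor free graph), and then deduce saturation from spectral extremality and connectedness exactly as the paper does. Your $(\Rightarrow)$ argument via the counting observation that at most $\gamma_\mathbb{H}$ branch sets can meet $L$ is in fact slightly cleaner than the paper's version, which instead selects a minimal $H$-partition maximizing $|L\cap(\cup_i V_i)|$ to force each vertex of $L$ into its own singleton branch set.
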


Let $ex(n,\mathbb{H}_{minor})$ denote the maximal number of edges in an
$n$-vertex $\mathbb{H}$-minor-free graph, and let $EX(n,\mathbb{H}_{minor})$
be the set of extremal graphs with respect to $ex(n,\mathbb{H}_{minor})$.
If the members in $\Gamma(\mathbb{H})$ are connected,
we obtain a stronger result than Theorem \ref{thm1.2}.

\begin{thm}\label{thm1.3}
If $\Gamma(\mathbb{H})$ is a connected family,
then $G^*-L\in EX(n-\gamma_\mathbb{H},\Gamma(\mathbb{H})_{minor})$.
\end{thm}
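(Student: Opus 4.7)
The plan is to argue by contradiction. Suppose $G^*-L \notin EX(n-\gamma_\mathbb{H}, \Gamma(\mathbb{H})_{minor})$ and fix $G' \in EX(n-\gamma_\mathbb{H}, \Gamma(\mathbb{H})_{minor})$, so that $e(G') > e(G^*-L)$. The aim is to build an $\mathbb{H}$-minor free graph $\tilde G$ on $n$ vertices with $\rho(\tilde G) > \rho(G^*)$, contradicting $G^* \in SPEX(n, \mathbb{H}_{minor})$.

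The backbone is a structural equivalence that should already appear inside the proof of Theorem~\ref{thm1.2}: for any graph $G''$ on $n - \gamma_\mathbb{H}$ vertices, $K_{\gamma_\mathbb{H}} \nabla G''$ is $\mathbb{H}$-minor free if and only if $G''$ is $\Gamma(\mathbb{H})$-minor free. For the forward direction, consider a putative $H$-minor model in $K_{\gamma_\mathbb{H}} \nabla G''$; since the clique part has only $\gamma_\mathbb{H}$ vertices, at most $\gamma_\mathbb{H}$ of the $|H|$ pairwise-disjoint branch sets can meet it, so at least $|H|-\gamma_\mathbb{H}$ branch sets lie entirely in $V(G'')$. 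Restricting to any $|H|-\gamma_\mathbb{H}$ of them produces a minor of $G''$ containing some $H[W] \in \Gamma_{|H|-\gamma_\mathbb{H}}(H)$ as a subgraph, and passing to an irreducible reduction yields a minor in $\Gamma^*_{|H|-\gamma_\mathbb{H}}(H) \subseteq \Gamma(\mathbb{H})$. Conversely, any $F = H[S] \in \Gamma(\mathbb{H})$ satisfies $|V(H)\setminus S| = \gamma_\mathbb{H}$, so $H$ embeds as a subgraph of $K_{\gamma_\mathbb{H}} \nabla F$ (placing $V(H)\setminus S$ into the clique part), and therefore $H$ is a minor of $K_{\gamma_\mathbb{H}} \nabla G''$ whenever $F$ is a minor of $G''$. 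Combined with Theorem~\ref{thm1.1} giving $G^* = K_{\gamma_\mathbb{H}} \nabla (G^*-L)$, the candidate $\tilde G := K_{\gamma_\mathbb{H}} \nabla G'$ is then $\mathbb{H}$-minor free and satisfies $e(\tilde G) - e(G^*) = e(G') - e(G^*-L) \geq 1$.

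The principal difficulty is converting this edge-count advantage into a strict spectral gain. Let $x$ be the Perron vector of $G^*$ with $\|x\|_2 = 1$; symmetry of the dominating set $L$ forces $x_v \equiv a$ on $L$, and at each $u \in V(G^*-L)$ the eigenvalue equation reads
$$\rho(G^*)\, x_u = \gamma_\mathbb{H}\, a + \sum_{v \in N_{G^*-L}(u)} x_v.$$
Using $\rho(G^*) = \Theta(\sqrt{n})$ (which follows from $B_{\gamma_\mathbb{H}, n - \gamma_\mathbb{H}} \subseteq G^*$) together with the sparseness of $G^*-L$ (it is $\Gamma(\mathbb{H})$-minor free, hence has $O(n)$ edges), a quantitative Perron bootstrapping should yield $x_u = (1+o(1))\, c$ uniformly on $V(G^*-L)$ with $c := \gamma_\mathbb{H} a / \rho(G^*)$. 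Transferring $x$ through any bijection sending $L$ onto the dominating clique part of $\tilde G$ gives a unit test vector $y$ on $V(\tilde G)$ whose $L$--$L$ and $L$--$V(G')$ quadratic contributions exactly cancel those of $x$ in $G^*$, yielding
$$\rho(\tilde G) - \rho(G^*) \geq 2 \sum_{uv \in E(G')} y_u y_v - 2 \sum_{uv \in E(G^*-L)} x_u x_v = (1+o(1))\, 2c^2 \bigl(e(G') - e(G^*-L)\bigr) > 0,$$
the required contradiction. The main obstacle is this quantitative Perron estimate at a precision sharp enough for the $o(1)$ error to beat the single-edge gap; this is where the stability and absorbing techniques advertised in the abstract should enter. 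The connectedness hypothesis on $\Gamma(\mathbb{H})$ supports the argument by guaranteeing that $ex(\cdot, \Gamma(\mathbb{H})_{minor})$ grows linearly in $n$ and that extremal graphs have a clean structure, avoiding pathologies that disconnected forbidden minors would otherwise induce.
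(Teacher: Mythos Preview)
Your approach has a genuine quantitative gap in the final spectral comparison. The Perron bootstrapping you describe does indeed yield uniform control of the form $x_u \in [c_-, c_+]$ on $V(G^*-L)$, with $c_- = \sigma_L/\rho^*$ and $c_+ = \sigma_L/(\rho^* - O(1))$; but the relative error $c_+/c_- - 1$ is only $O(1/\rho^*) = O(1/\sqrt{n})$, not $o(1/n)$. Since both $e(G')$ and $e(G^*-L)$ are of order $n$, the two quadratic sums you compare each carry an absolute error of order $n\cdot c^2\cdot O(1/\sqrt{n}) = O(\sqrt{n})\,c^2$, which swamps the gain $(e(G')-e(G^*-L))\,c^2 \ge c^2$ coming from a single extra edge. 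In short, the ``$(1+o(1))$'' in your display hides an error that is multiplicative in the total number of edges, not in the edge difference, so the inequality cannot be closed this way.

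The paper circumvents exactly this obstacle by never comparing the full graphs $G'$ and $G^*-L$ at once. Instead it first proves a local statement (Lemma~\ref{lem3.7}): any \emph{bounded-size} union $R$ of components of $G^*-L$ already satisfies $e(R)=ex(|R|,\Gamma(\mathbb{H})_{minor})$. Here the error term in the analogue of your computation is $O(1)$, and the single-edge gap suffices. This is precisely where the connectedness hypothesis enters: because every member of $\Gamma(\mathbb{H})$ is connected, one may replace the induced subgraph on $R$ by any other $\Gamma(\mathbb{H})$-minor free graph on $R$ without creating a forbidden minor elsewhere. The global edge-maximality is then deduced from the local one by a separate structural argument (Lemmas~\ref{lem3.8}--\ref{lem3.10} and the proof following them), which shows that all components of both $G^*-L$ and an extremal $G'$ are small, or else reduces to tree/unicyclic components via a pruning argument. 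Your remark that connectedness ``guarantees linear growth of $ex$'' is not the operative mechanism; it is the component-wise replaceability that matters.
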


Theorems \ref{thm1.1}, \ref{thm1.2} and \ref{thm1.3}
are crucial tools for addressing Problem \ref{prob1.1}.
An important application of these theorems is to characterize
$SPEX(n,\{H\}_{minor})$ for a complete $r$-partite graph $H$,
which extends the result of Tait for $SPEX(n,\{K_r\}_{minor})$ and 
provides a stronger solution to his conjecture for $SPEX(n,\{K_{s,t}\}_{minor})$ (see \cite{Tait}).

Before stating the result, we need to introduce some notations.
We denote by $\overline{G}$ the complement of a graph $G$, and by $K_{s_1,\ldots,s_r}$, a complete $r$-partite graph
with $s_1\geq\cdots\geq s_r$.
Let $H_{s_1,s_2}=(\beta-1)K_{1,s_2}\cup K_{1,s_2+\beta_0}$,
where $0\leq \beta_0\leq s_2$ and $\beta(s_2+1)+\beta_0=s_1+1$.
Clearly, $H_{s_1,s_2}$ is a star forest of order $s_1+1$.
Let $S\big(\overline{H_{s_1,s_2}}\big)$ denote the graph obtained
from $\overline{H_{s_1,s_2}}$ by subdividing
an edge $uv$ with the minimum degree sum $d(u)+d(v)$.

Denote by ${Pet}^\star$ the Petersen graph.
Set $\beta=\lfloor\frac{s_1+1}{s_2+1}\rfloor$ and $n-\sum_{2}^{r}s_i+1:=ps_1+q$ \emph{($1\leq q\leq s_1$)}.
We now define a graph $G^\blacktriangle$,
which will be used in our next theorem.
\begin{eqnarray}\label{align.00}
G^\blacktriangle=\left\{
\begin{aligned}
   &(p-1)K_{s_1} \cup S\left(\overline{H_{s_1,s_2}}\right) &&\hbox{if $(q,\beta)=(2,2)$};\\
   &(p-1)K_{s_1} \cup \overline{{Pet}^\star} &&\hbox{if $(q,\beta,s_1)=(2,1,8)$}; \\
   &(p-q)K_{s_1} \cup q\overline{H_{s_1,s_2}} &&\hbox{if $q\leq 2(\beta-1)$ and $(q,\beta)\neq(2,2)$}; \\
   &pK_{s_1} \cup K_q &&\hbox{if $q>2(\beta-1)$ and $(q,\beta,s_1)\neq(2,1,8)$}.
\end{aligned}
\right.
\end{eqnarray}


\begin{thm}\label{thm1.8}
Let $r\geq2$, $s_1\geq\cdots\geq s_r\geq1$
and $\gamma=\sum_{i=2}^rs_i-1\geq1$.

\noindent (i) If $s_1$ is even or $s_2\geq2$,
then $SPEX\big(n,\{K_{s_1,\ldots,s_r}\}_{minor}\big)
=\{K_\gamma\nabla G^\blacktriangle\}$.

\noindent (ii) If $s_1$ is odd and $s_2=1$, then $SPEX\big(n,\{K_{s_1,\ldots,s_r}\}_{minor}\big)
=\{K_\gamma\nabla G^\blacktriangledown\}$,
where every component of $G^\blacktriangledown$ is a cycle for $s_1=3$,
and is either $K_{s_1}$ or $\overline{H_{s_1,1}}$ otherwise.
\end{thm}


Theorems \ref{thm1.1} and \ref{thm1.2}
also imply or strengthen other existing results on minors.
Let $G^*\in SPEX(n,\mathbb{H}_{minor})$.
We now proceed with several direct applications.

If $\mathbb{H}=\{K_{a,3},K_{a+2}\}$, where $a\in\{2,3\}$,
then $\gamma_\mathbb{H}=a-1$.
For $H=K_{a,3}$, we have $\Gamma^*_{|H|-\gamma_\mathbb{H}}(H)
=\Gamma^*_4(H)=\{K_{1,3},K_{2,2}\}$, and for $H=K_{a+2}$, $\Gamma^*_{|H|-\gamma_\mathbb{H}}(H)
=\Gamma^*_3(H)=\{K_3\}$. By Theorems \ref{thm1.1} and \ref{thm1.2},
$G^*$ has $a-1$ dominating vertices and $G^*-L$ is $\{K_{1,3},K_{2,2},K_3\}$-minor-saturated.
Therefore, $G^*-L$ is a path, proving the following theorem.

\begin{thm}\label{thm1.4}\emph{(Tait and Tobin, \cite{Tait1})} 
For $a\in\{2,3\}$, we have $$SPEX(n,\{K_{a,3},K_{a+2}\}_{minor})=\{K_{a-1}\nabla P_{n-a+1}\}.$$
\end{thm}


Recently, Chen, Liu and Zhang \cite{Zhang}
characterized $SPEX(n,\{K_r-E(H)\}_{minor})$,
where $H$ is a subgraph of $K_r$ consisting of some vertex-disjoint paths.
Now, let $H$ be an arbitrary subgraph of $K_r$ with clique number $\omega(H)$, 
and let $B^{k}_{s,t}$ denote the graph obtained from the book
$B_{s,t}$ by adding $k$ isolated edges within its independent set.
By Theorems \ref{thm1.1} and \ref{thm1.2}, 
it is straightforward to verify the following extended result.

\begin{thm}\label{thm1.5}
Every graph $G^*$ in $SPEX(n,\{K_r-E(H)\}_{minor})$
contains a spanning book $B_{\gamma,n-\gamma}$, where $\gamma=r-\omega(H)-1$.
In particular, if $\omega(H)=2$, then
$G^*\cong B^{\lfloor(n-r+3)/2\rfloor}_{r-3,n-r+3}$ for $H\cong \frac{|H|}2K_2$,
and $G^*\cong B_{r-3,n-r+3}$ otherwise.
\end{thm}

A \emph{$(k+1)$-wheel} is defined as $W_{k+1}=K_1\nabla C_k$.
Cioab\u{a}, Desai, Tait \cite{CB1} proved a nice eigenvalue-extremal result for graphs excluding a fixed wheel.
By applying Theorems \ref{thm1.1} and \ref{thm1.2}, 
we immediately obtain a refinement of their result for $W_{k+1}$-minor-free graphs.


\begin{thm}\label{thm1.6}
Let $\gamma=\lceil\frac{k}2\rceil$.
If $k$ is odd, then $SPEX(n,\{W_{k+1}\}_{minor})
=\{B_{\gamma,n-\gamma}\}$.
Otherwise, we have $SPEX(n,\{W_{k+1}\}_{minor})=\{B^1_{\gamma,n-\gamma}\}$.
\end{thm}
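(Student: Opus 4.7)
The plan is to derive Theorem~\ref{thm1.6} directly from the structural tools Theorems~\ref{thm1.1} and~\ref{thm1.2}. With $H=W_{k+1}$, the hub is universal, so $\alpha_H=\alpha(C_k)=\lfloor k/2\rfloor$ and $\gamma_H=|H|-\alpha_H-1=\lceil k/2\rceil=\gamma$. Fix any $G^*\in SPEX(n,\{W_{k+1}\}_{minor})$. Theorem~\ref{thm1.1} supplies a set $L$ of $\gamma$ dominating vertices so that $G^*$ contains $B_{\gamma,n-\gamma}$ as a spanning subgraph, and Theorem~\ref{thm1.2} then asserts that $G^*-L$ is $\Gamma^*_{\alpha_H+1}(W_{k+1})$-minor saturated on $n-\gamma$ vertices. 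The task thus splits cleanly into identifying this irreducible family and reading off what saturation forces on the edges of $G^*-L$.

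For the first task I would partition each $(\alpha_H+1)$-subset $S\subseteq V(W_{k+1})$ into the hub-inclusive case ($h\in S$) and the pure case ($S\subseteq V(C_k)$). In the hub-inclusive case, $W_{k+1}[S]$ carries at least $\alpha_H\ge 2$ hub-edges and therefore contains, as an abstract subgraph, any pure minimum-edge member of $\Gamma_{\alpha_H+1}(W_{k+1})$; this makes every hub-inclusive member reducible. In the pure case, since $\alpha(C_k)=\alpha_H<\alpha_H+1$, at least one cycle edge survives; a direct count on how many cycle-arcs are cut when the excluded vertex set forms or fails to form an independent set in $C_k$ shows that the edge-minimum pure member is uniquely $K_2\cup(\alpha_H-1)K_1$ for odd $k$, and is exactly the incomparable pair $P_3\cup(\alpha_H-2)K_1$, $2P_2\cup(\alpha_H-3)K_1$ for even $k$. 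Incomparability in the even case (same edge count but different degree sequences) keeps both members irreducible, and every other pure member strictly contains one of them. The main obstacle is precisely this step: checking the non-embedding of the two even-case candidates into one another and verifying that every other $(\alpha_H+1)$-vertex pure induced subgraph dominates at least one of them.

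For the second task one uses the straightforward dictionary, valid once $|V(F)|\ge\alpha_H+1$ (automatic since $n$ is large): $F$ has a $K_2\cup jK_1$ minor iff $F$ has an edge; a $P_3\cup jK_1$ minor iff $F$ has two incident edges; and a $2P_2\cup jK_1$ minor iff $F$ has two vertex-disjoint edges. For odd $k$, minor-freeness forces $G^*-L$ to be edgeless and this is trivially saturated, giving $G^*\cong B_{\gamma,n-\gamma}$. For even $k$, minor-freeness forces $G^*-L$ to have at most one edge, while saturation excludes the edgeless option (a single added edge creates neither two incident nor two disjoint edges), so $G^*-L$ has exactly one edge and $G^*\cong B^1_{\gamma,n-\gamma}$. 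Once the irreducible family is in hand, this decoding is essentially mechanical.
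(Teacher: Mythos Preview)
Your approach mirrors the paper's Example~4 exactly: compute $\alpha_H=\lfloor k/2\rfloor$ and $\gamma_H=\lceil k/2\rceil$, invoke Theorems~\ref{thm1.1} and~\ref{thm1.2}, identify $\Gamma^*_{\alpha_H+1}(W_{k+1})$, and decode what saturation forces on $G^*-L$. The computation of the irreducible family and the passage from saturation to the structure of $G^*-L$ are carried out correctly for odd $k$ and for even $k\ge 6$.

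There is, however, a boundary case at $k=4$ where your argument (and the paper's terse Example~4) breaks. When $k=4$ one has $\alpha_H=2$, so the putative second irreducible member $2P_2\cup(\alpha_H-3)K_1$ is vacuous and in fact $\Gamma^*_{3}(W_5)=\{P_3\}$. Your sentence ``minor-freeness forces $G^*-L$ to have at most one edge'' then fails: a graph on at least three vertices is $P_3$-minor free precisely when every component has at most two vertices, so any matching qualifies. Saturation in $SAT(n-2,\{P_3\}_{minor})$ yields a near-perfect matching rather than a single edge, and the spectral extremum becomes $K_2\nabla\big(\lfloor\tfrac{n-2}{2}\rfloor K_2\cup\epsilon K_1\big)=B^{\lfloor(n-2)/2\rfloor}_{2,n-2}$, not $B^1_{2,n-2}$. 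Indeed, since $W_5\cong K_{2,2,1}$, applying Theorem~\ref{thm1.8} with $(s_1,s_2,s_3)=(2,2,1)$ gives $\beta=1$, $q>2(\beta-1)=0$, and hence $G^\blacktriangle=pK_2\cup K_q$, confirming this. So the statement of Theorem~\ref{thm1.6} is actually inconsistent with Theorem~\ref{thm1.8} at $k=4$; your proof is valid for $k=3$ and all $k\ge 5$, but no argument can establish the claimed conclusion at $k=4$ because that conclusion is false as written.
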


A \emph{$t$-flower} $F_{s_1,\ldots,s_t}$
is the graph obtained from $t$ cycles of lengths
$s_1,\ldots,s_t$ respectively by identifying one vertex.
If $s_1=\cdots=s_t=3$, then it is
a friendship graph.
Recently, He, Li and Feng \cite{FENG}
determined $SPEX\big(n,\{F_{s,\ldots,s}\}_{minor}\big)$ for
$s\in\{3,4\}$.
By Theorems \ref{thm1.1} and \ref{thm1.2}, 
we can similarly obtain a stronger result for $F_{s_1,\ldots,s_t}$-minor-free graphs.


\begin{thm}\label{thm1.7}
Let $\gamma=\sum_{i=1}^t\lceil\frac{s_i}2\rceil-t$. 
If there exists an odd $s_i$, then $SPEX(n,\{F_{s_1,\ldots,s_t}\}_{minor})=\{B_{\gamma,n-\gamma}\}$.
Otherwise, we have $SPEX(n,\{F_{s_1,\ldots,s_t}\}_{minor})
=\{B^1_{\gamma,n-\gamma}\}$.
\end{thm}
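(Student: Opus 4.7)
The plan is to specialize Theorems \ref{thm1.1} and \ref{thm1.2} to $\mathbb{H} = \{F_{s_1,\ldots,s_t}\}$ and then translate the resulting minor-saturation condition on $G^* - L$ into a direct structural statement about $G^*$, following the outline of Example 5.

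First I would compute the relevant invariants for $H = F_{s_1,\ldots,s_t}$. A routine argument gives $\alpha_H = \sum_{i=1}^t \lfloor s_i/2 \rfloor$ (by taking alternate non-center vertices on each cycle) and $|H| = \sum_{i=1}^t s_i - (t-1)$, so $\gamma_H = |H| - \alpha_H - 1 = \sum_{i=1}^t \lceil s_i/2 \rceil - t = \gamma$. By Theorem \ref{thm1.1}, any $G^* \in SPEX(n,\{F_{s_1,\ldots,s_t}\}_{minor})$ contains $B_{\gamma,n-\gamma}$ as a spanning subgraph, so its set $L$ of dominating vertices satisfies $|L| = \gamma$. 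By Theorem \ref{thm1.2}, $G^* - L$ is $\Gamma^*_{\alpha_H+1}(H)$-minor saturated on $n-\gamma$ vertices.

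Next I would determine $\Gamma^*_{\alpha_H+1}(H)$. If some $s_i$ is odd, take a maximum independent set $I$ of $H$ disjoint from the center $c$ and let $v$ be a vertex of $C_{s_i}$ adjacent to exactly one vertex of $I$; such a $v$ exists precisely because the non-center path $P_{s_i-1}$ has an even number of vertices. Then $H[I \cup \{v\}] \cong K_2 \cup (\alpha_H-1)K_1$, which is irreducible since no $(\alpha_H+1)$-subset of $V(H)$ is independent; any induced subgraph on $\alpha_H+1$ vertices with two or more edges contains $K_2 \cup (\alpha_H-1)K_1$ as a proper spanning subgraph up to isomorphism, so it is not irreducible. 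Hence $\Gamma^*_{\alpha_H+1}(H) = \{K_2 \cup (\alpha_H-1)K_1\}$. If every $s_i$ is even, a pigeonhole argument (splitting on whether $c \in S$) combined with a vertex-level count on $P_{s_j-1}$ shows that for every $(\alpha_H+1)$-subset $S$, some cycle contributes at least $s_j/2 + 1$ vertices to $S$, forcing $H[S]$ to have at least two edges and hence to contain either $P_3$ or $2P_2$ as an induced subgraph. Explicit selections realize both $P_3 \cup (\alpha_H-2)K_1$ and $2P_2 \cup (\alpha_H-3)K_1$ as induced subgraphs, and neither is a proper spanning subgraph of the other, so $\Gamma^*_{\alpha_H+1}(H) = \{P_3 \cup (\alpha_H-2)K_1, 2P_2 \cup (\alpha_H-3)K_1\}$.

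Translating the saturation property into an edge count for $G^* - L$ is then direct. For $n$ large, any single edge in $G^* - L$ together with $\alpha_H-1$ other vertices realizes $K_2 \cup (\alpha_H-1)K_1$ as a subgraph (hence as a minor), while any two edges realize either $P_3 \cup (\alpha_H-2)K_1$ or $2P_2 \cup (\alpha_H-3)K_1$ similarly. Combined with saturation, this forces $G^* - L$ to be edgeless in the odd case (giving $G^* \cong B_{\gamma,n-\gamma}$) and to have exactly one edge in the even case (giving $G^* \cong B^1_{\gamma,n-\gamma}$). Since $SPEX(n,\{F_{s_1,\ldots,s_t}\}_{minor})$ is non-empty for large $n$, this determines the extremal family uniquely.

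I expect the main obstacle to lie in the even case, where one must verify via a careful vertex-level argument on $P_{s_j-1}$ that $s_j/2+1$ selected vertices force at least two edges (not just one) and that both $P_3 \cup (\alpha_H-2)K_1$ and $2P_2 \cup (\alpha_H-3)K_1$ genuinely arise as induced subgraphs of $F_{s_1,\ldots,s_t}$; everything else then reduces to direct applications of Theorems \ref{thm1.1} and \ref{thm1.2}.
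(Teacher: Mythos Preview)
Your proposal is correct and follows essentially the same route as the paper's Example~5: compute $\alpha_H$ and $\gamma_H$, apply Theorems~\ref{thm1.1} and~\ref{thm1.2}, identify $\Gamma^*_{\alpha_H+1}(H)$ by the odd/even dichotomy, and read off the structure of $G^*-L$ from saturation. You supply more justification than the paper does for two points the paper leaves implicit---that $K_2\cup(\alpha_H-1)K_1$ is the unique irreducible member in the odd case, and that $s_i/2+1$ vertices on an even cycle force at least two edges---but the overall argument is the same.
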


The rest of the paper is organized as follows.
Section \ref{section2} presents some preliminary results, including a stability theorem, 
which serves as a key tool for this paper.
For the sake of readability,
we postpone the proof of this theorem until the final section.
In Section \ref{section3},
we base our approach on the stability theorem and apply an absorbing method to prove Theorem \ref{thm1.1}.
Building on Theorem \ref{thm1.1},
we can further prove Theorems \ref{thm1.2} and \ref{thm1.3},
which provide a more refined description of graphs in $SPEX(n,\mathbb{H}_{minor})$.
In Section \ref{section4},
we will use Theorems \ref{thm1.1}, \ref{thm1.2} and \ref{thm1.3}
to characterize $SPEX(n,\mathbb{H}_{minor})$ for
complete multipartite minors, thereby proving Theorems \ref{thm1.8}.

\section{Preliminary results}\label{section2}

Observe that isolated vertices in a graph $H$ 
do not affect the determination of whether a graph $G$ of sufficiently large order is $H$-minor-free.
Throughout the paper,
we assume that every member $H$ of the family $\mathbb{H}$ is a finite graph with minimum degree $\delta(H)\geq 1$.
We use $|G|$ to denote the order and $e(G)$ to denote the size of $G$.
Let $G[S]$ be the subgraph of $G$ induced by a vertex subset $S$.

In 1967, Mader \cite{Mader}
proved a landmark result: $e(G)\leq C n$ for
every $n$-vertex $H$-minor-free graph $G$, where $C$ is a constant.
Thomason \cite{Thomason} later gave the following result.

\begin{lem}\label{lem2.1}
Every non-empty graph $G$ with $e(G)\geq2^{s+1}t|G|$
has a proper $K_{s,t}$ minor.
\end{lem}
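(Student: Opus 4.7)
The plan is to first reduce to a minimum-degree hypothesis via the standard averaging trick, and then construct the $K_{s,t}$ minor inductively by maintaining a growing clique of hub vertices together with a shrinking pool of common neighbors; the factor $2^{s+1}$ in the hypothesis is exactly what pays for a halving loss at each of the $s$ construction steps.

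For the reduction, the hypothesis $e(G)\geq 2^{s+1}t|G|$ gives average degree at least $2^{s+2}t$, so iteratively deleting any vertex of degree strictly less than $2^{s+1}t$ does not decrease the average degree and therefore terminates at a nonempty subgraph $G'$ with $\delta(G')\geq 2^{s+1}t$. It then suffices to find a $K_{s,t}$ minor in $G'$. I would proceed by induction on $s$, the base case $s=1$ being immediate since any vertex of $G'$ together with $t$ of its many neighbors spans a $K_{1,t}$. For the inductive step, the target is to produce $s$ pairwise-adjacent hubs $v_1,\ldots,v_s$ together with a common-neighbor pool $B\subseteq N(v_1)\cap\cdots\cap N(v_s)$ of size at least $t$, since any $t$ elements of $B$ then yield $K_{s,t}$ directly as a subgraph, which is a proper minor because $|G'|$ is much larger than $s+t$.

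Concretely, I would start from an arbitrary $v_1$ with $B_1=N(v_1)$ of size at least $2^{s+1}t$, and at step $i\in\{1,\ldots,s-1\}$ pick $v_{i+1}\in B_i$ maximizing $|N(v_{i+1})\cap B_i|$, then set $B_{i+1}=B_i\cap N(v_{i+1})\setminus\{v_{i+1}\}$, so that the $s$ hubs chosen so far are pairwise adjacent and the surviving pool has size at least $|B_1|/2^{s-1}=4t$. The main obstacle is justifying the halving $|B_{i+1}|\geq |B_i|/2$: it requires some vertex of $B_i$ to have at least half of $B_i$ as neighbors inside $B_i$, which is not automatic from $\delta(G')\geq 2^{s+1}t$ alone, since edges incident to $B_i$ may leak outside. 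The delicate step is to fall back on one of two cheap remedies whenever the induced density on $B_i$ is too low: either recurse into a dense subgraph of $G'[B_i]$ obtained by a fresh round of the averaging trick, or replace the hub $v_{i+1}$ by a small connected branch set (for instance a two-edge path through a well-chosen vertex outside $B_i$), which preserves common adjacency to the pool while still halving its size by at most a factor of two. Completing this bookkeeping closes the induction and delivers the desired $K_{s,t}$ minor.
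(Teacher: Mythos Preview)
The paper does not prove this lemma; it is quoted from Thomason. Your reduction to a subgraph $G'$ with $\delta(G')\geq 2^{s+1}t$ and the plan of induction on $s$ are sound in outline, but there is a genuine gap at exactly the point you yourself flag, and neither proposed remedy closes it. Once you are in $G'$, the minimum-degree hypothesis controls total degree, not degree into $B_i$, so $G'[B_i]$ can be arbitrarily sparse (even edgeless); there is then no vertex of $B_i$ with $|B_i|/2$ neighbours inside $B_i$, and correspondingly no dense subgraph of $G'[B_i]$ to ``recurse into''. Replacing the hub by a two-edge path $u\,w\,u'$ with $w\notin B_i$ fails for the same reason: nothing forces the neighbours of $w$ to lie in $B_i$, so the branch set $\{u,w,u'\}$ need not see half of $B_i$ either.

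The missing idea is to pass not merely to a subgraph of large minimum degree but to a \emph{minor}-minimal $H$ with $e(H)\geq 2^{s+1}t|H|$, minimal under both vertex deletion and edge contraction. Deletion-minimality still gives $\delta(H)\geq 2^{s+1}t$; contraction-minimality forces $|N_H(u)\cap N_H(v)|\geq 2^{s+1}t-1$ for every edge $uv$ (otherwise contracting $uv$ would preserve the density bound on a smaller graph). Now for any vertex $v_1$, every vertex of $H_1:=H[N_H(v_1)]$ has degree at least $2^{s+1}t-1$ in $H_1$, so $e(H_1)$ is essentially at least $2^{s}t\,|H_1|$; by induction $H_1$ contains a $K_{s-1,t}$ minor, and adjoining $v_1$ as a singleton branch set yields $K_{s,t}$. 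This codegree bound from contraction-minimality is precisely the halving mechanism you were looking for, and it is what your argument lacks.
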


Lemma \ref{lem2.1} can be extended to general graph minors.

\begin{lem}\label{lem2.2}
Let $G$ be an $H$-minor-free graph of order $n$. Then
$e(G)<2^{|H|+1}e(H) n$.
\end{lem}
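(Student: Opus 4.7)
The plan is to derive Lemma \ref{lem2.2} as a direct consequence of Lemma \ref{lem2.1} by locating a complete bipartite graph that contains $H$ as a minor and whose parameters match the desired constant $C_H = 2^{|H|+1}e(H)$ exactly. So the real content is a simple structural observation, and the quantitative bound then follows by contraposition of Thomason's inequality.

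The key observation I would establish first is that $H$ itself is a minor of $K_{|H|,e(H)}$. To see this, index one side of the bipartition by $V(H)$ and the other by $E(H)$. For each edge $e_k = u_kw_k$ of $H$, the vertex $e_k$ on the edge-side of $K_{|H|,e(H)}$ is adjacent to both $u_k$ and $w_k$ on the vertex-side. Retaining only the two edges $e_ku_k$ and $e_kw_k$ for every $k$ yields a subdivision of $H$ (each edge subdivided exactly once) sitting inside $K_{|H|,e(H)}$, which realizes $H$ as a topological, hence ordinary, minor of $K_{|H|,e(H)}$.

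With this in hand, the proof proceeds by contradiction. Suppose $G$ is $H$-minor free on $n$ vertices but $e(G)\geq 2^{|H|+1}e(H)\cdot n$. Since this forces $G$ to be non-empty, Lemma \ref{lem2.1} applied with $s=|H|$ and $t=e(H)$ produces a (proper) $K_{|H|,e(H)}$ minor of $G$. By transitivity of the minor relation and the structural observation of the previous paragraph, $G$ would then contain $H$ as a minor, contradicting the hypothesis. Hence $e(G) < 2^{|H|+1}e(H)\cdot n = C_H\cdot n$, as required.

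Given how direct the reduction is, I do not anticipate any genuine obstacle: the only care needed is to check that the parameters $s = |H|$ and $t = e(H)$ in Thomason's bound reproduce exactly $2^{s+1}t = 2^{|H|+1}e(H) = C_H$, matching the constant in the statement. The whole argument is essentially ``$H\preceq K_{|H|,e(H)}$, then invoke Lemma \ref{lem2.1}''.
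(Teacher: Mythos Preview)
Your proposal is correct and matches the paper's own proof essentially verbatim: both observe that the graph obtained from $H$ by subdividing every edge once embeds in $K_{|H|,e(H)}$ (so $H\preceq K_{|H|,e(H)}$), and then invoke Lemma~\ref{lem2.1} with $s=|H|$, $t=e(H)$ by contraposition. There is nothing to add.
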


\begin{proof}
Let $H'$ be the graph obtained from $H$ by subdividing each edge once.
Clearly, $H'$ is a bipartite spanning subgraph of $K_{|H|,e(H)}$.
Thus, $K_{|H|,e(H)}$ contains $H$ as a minor.

Now, suppose to the contrary that $e(G)\geq2^{|H|+1}e(H)n.$
By Lemma \ref{lem2.1}, $G$ contains a $K_{|H|,e(H)}$ minor,
and thus an $H$ minor, a contradiction. Therefore, the result follows.
\end{proof}


\begin{lem}\label{lem3.1} \emph{(\cite{CRS,DJS})}
Let $t\geq 3$ and $n\geq t+2$.
If $G$ is an $n$-vertex connected graph with no $K_{1,t}$ minor,
then $e(G)\leq \binom{t}{2}+n-t$,
and for all $n$, this is the best possible.
\end{lem}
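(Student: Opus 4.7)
The plan is a two-pronged attack: exhibit an extremal construction for sharpness, and bound the edge count by induction on $n$. For sharpness I would take the graph $G^\ast$ obtained from $K_t$ by subdividing a single edge into a path of length $n - t + 1$. This has exactly $n$ vertices and $\binom{t}{2} + (n - t)$ edges, it is connected, and every minor of $G^\ast$ is a minor of $K_t$ (one may always first contract the subdivided path back to recover $K_t$, then perform the remaining minor operations). Since $K_t$ has only $t < t+1$ vertices, it cannot contain $K_{1,t}$ as a minor, so neither can $G^\ast$, which gives sharpness for every $n$.

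For the upper bound, I would induct on $n$ with trivial base $n=t$. The easy case of the inductive step is $\delta(G)\leq 1$: a leaf $v$ exists, its incident edge is a bridge, so $G-v$ is a connected $K_{1,t}$-minor-free graph on $n-1$ vertices, and the inductive hypothesis gives $e(G) = e(G-v) + 1 \leq \binom{t}{2} + n - t$. The harder case is $\delta(G)\geq 2$, where I would invoke the block--cut-vertex decomposition of $G$: writing the blocks as $B_1,\ldots,B_s$, one has the standard identities $\sum_i e(B_i)=e(G)$ and $\sum_i (|B_i|-1)=n-1$, and moreover, since $G$ is $K_{1,t}$-minor-free, no vertex of $G$ has degree at least $t$, which constrains how densely several blocks can share a cut vertex.

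The heart of the argument, and the main obstacle, is establishing the tight per-block bound that every $2$-connected $K_{1,t}$-minor-free graph $B$ on $m$ vertices satisfies $e(B) \leq m - 1 + \binom{t-1}{2}$, with equality achieved by subdivisions of $K_t$. I would prove this by an ear decomposition of $B$: starting from a cycle and adding ears, an ear of length $\ell$ contributes $\ell$ edges and $\ell-1$ new vertices, so the surplus of $e(B)$ over the spanning-tree count $m-1$ is borne entirely by length-$1$ ears (chords), and these chords are in turn controlled by the observation that $\binom{t-1}{2} + 1$ chords would force a $K_{1,t}$ minor at a branch vertex of high degree. Once this per-block lemma is secured, I would combine it with the block identities above and use the fact that at most one block of $G$ can be ``full'' (otherwise two cycle-dense blocks meeting at a common cut vertex would combine into the forbidden minor), and collapse $e(G) = \sum_i e(B_i)$ to the claimed bound $\binom{t}{2} + n - t$.
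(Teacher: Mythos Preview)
The paper does not prove this lemma; it is quoted from \cite{CRS,DJS} without argument, and only the sharpness witness $S^{n-t}(K_t)$ is mentioned in the surrounding text. So there is no in-paper proof to compare against, and your sharpness construction is exactly the one the authors cite.

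Your upper-bound sketch, however, has two concrete gaps. First, the ear-decomposition bookkeeping is off: starting from a cycle, \emph{every} ear of length $\ell$ adds $\ell$ edges and $\ell-1$ vertices, hence contributes exactly $1$ to the surplus $e(B)-(|B|-1)$, not just the length-$1$ ears. Thus $e(B)-(|B|-1)$ equals the cycle rank of $B$ (one plus the number of ears), and your task becomes bounding the cycle rank of a $2$-connected $K_{1,t}$-minor-free graph by $\binom{t-1}{2}$. The sentence ``$\binom{t-1}{2}+1$ chords would force a $K_{1,t}$ minor at a branch vertex of high degree'' does not establish this; high cycle rank does not by itself pin many ears at a single vertex, and $\Delta(B)\le t-1$ alone is far too weak.

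Second, even granting the per-block inequality $e(B_i)\le (m_i-1)+\binom{t-1}{2}$, summing over the blocks via $\sum_i(m_i-1)=n-1$ yields only $e(G)\le (n-1)+s\binom{t-1}{2}$, which overshoots the target whenever $s\ge 2$. Your fix (``at most one block can be full'') does not close this: two blocks that are each merely half-full already break the bound, and two dense blocks need not share a cut vertex at all. To make a block argument work you would need something sharper, e.g.\ inducting through an end-block and showing that the presence of any nontrivial second block forces enough slack in the first. As written, the combination step does not go through.
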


Let $S^\ell(K_t)$ denote the graph obtained from $K_t$ by subdividing one edge $\ell$ times.
As noted by Ding, Johnson and Seymour \cite{DJS},
the upper bound in Lemma \ref{lem3.1} is sharp,
and $S^{n-t}(K_t)$ serves as an extremal graph.
A natural question arises: how can we characterize all such extremal graphs?
When $t=3$, $S^{n-t}(K_t)$ is an $n$-cycle,
which is clearly the unique extremal graph.
The next case, $t=4$, will be instrumental for our main theorem.

\begin{lem}\label{lem3.2}
Let $n\geq 5$ and let $G$ be a connected graph of order 
$n$ that is $K_{1,4}$-minor-free, with the maximal number of edges. 
Then $G\cong S^{n-4}(K_4)$.
\end{lem}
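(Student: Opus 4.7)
\medskip

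\textbf{Proof proposal.} My plan is to combine Lemma \ref{lem3.1} with a degree-sequence analysis and then reduce $G$ topologically to a $K_4$. Since any vertex of degree $\ge 4$ gives a trivial $K_{1,4}$ minor, $\Delta(G)\le 3$; Lemma \ref{lem3.1} with $t=4$ then forces $e(G)=n+2$, and combining $\sum_v d(v)=2(n+2)$ with $\Delta\le 3$ and $\delta\ge 1$ yields $n_3=n_1+4$, where $n_i$ counts vertices of degree $i$.

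I next argue by induction on $n$ that $n_1=0$. Suppose $v$ is a leaf with neighbour $u$; then $G-v$ is a connected $K_{1,4}$-minor-free graph on $n-1$ vertices with $(n-1)+2$ edges, hence again extremal. For $n=5$ this forces $G-v=K_4$ and $\deg_G(u)=4$, a contradiction; for $n\ge 6$ the induction hypothesis gives $G-v\cong S^{n-5}(K_4)$, so $u$ must be a degree-2 vertex internal to the subdivided path. Taking $V_0$ to be the subpath from an endpoint of the subdivided edge up to $u$ produces four pairwise-disjoint branches---namely $v$, the continuation of the subdivided path past $u$, and the two remaining vertices of $K_4$---yielding a $K_{1,4}$ minor. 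Hence $n_1=0$, so $G$ has exactly four degree-3 vertices and $n-4$ degree-2 vertices; suppressing the degree-2 vertices produces a 3-regular multigraph $G'$ on four vertices.

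Finally I claim $G'\cong K_4$ and then $G\cong S^{n-4}(K_4)$. A loop at $w$ in $G'$ corresponds to a cycle $C$ in $G$ whose only degree-3 vertex is $w$; following $w$'s off-cycle edge until meeting another degree-3 vertex $w'$ yields a path $Q$, and $V_0=Q$ gives four disjoint branches (the two nonempty arcs of $C-w$ adjacent to $w\in V_0$, and the two remaining neighbours of $w'$), a contradiction. A multi-edge between $u,v$ in $G'$ corresponds to two internally disjoint $u$-$v$ paths $P_1,P_2$ in $G$; when $P_2$ has at least two internal vertices, $V_0=P_1$ already has four distinct external neighbours, while in the short cases ($P_2$ a direct edge or a length-2 path) one absorbs $P_2$ into $V_0$ and extends along the external edges of $u$ or $v$ to a further degree-3 vertex whose remaining neighbourhood supplies the missing branches. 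Hence $G'\cong K_4$, and $G$ is $K_4$ with $n-4$ subdivisions distributed among its edges; a short case analysis (distinguishing whether two subdivided edges of $K_4$ share a vertex) shows that subdividing two distinct edges always forces a $K_{1,4}$ minor via an appropriate choice of a 2-vertex $V_0$ among the unsubdivided edges of $K_4$, so exactly one edge is subdivided and $G\cong S^{n-4}(K_4)$. The main obstacle is the short-multi-edge elimination: when a multi-edge is realised by a triangle (one direct edge plus a length-2 path) or a small $K_{2,2}$ (two length-2 paths), a single-path $V_0$ produces only three external neighbours, so one must absorb the offending substructure and trace out to another degree-3 vertex; since only four degree-3 vertices exist in $G$, the resulting case analysis is finite but does need careful bookkeeping to keep the four branches pairwise disjoint.
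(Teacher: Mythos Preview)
Your topological-reduction approach is genuinely different from the paper's. After establishing $n_1=0$ (which both of you do the same way), the paper runs a second induction: it either shows directly that the configuration ``every degree-2 vertex lies in a triangle'' is impossible, or else finds a degree-2 vertex $u$ with non-adjacent neighbours $u_1,u_2$, contracts the path $u_1uu_2$ to an edge, applies the induction hypothesis to get $S^{n-5}(K_4)$, and then locates the contracted edge inside that graph. Your method---suppress all degree-2 vertices at once and classify the resulting 3-regular multigraph on four vertices---is conceptually cleaner and avoids the paper's somewhat ad hoc Case~1.

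That said, your multi-edge elimination has a real gap. After loops are excluded, the only connected loopless 3-regular multigraph on four vertices other than $K_4$ is $M_1$: double edges $uv$ and $wz$, single edges $uw$ and $vz$. Your ``absorb $P_2$ and extend along $u$'s external edge to $w$'' produces $V_0=P_1\cup P_2\cup Q_u$, whose only external neighbours are $v'$ (toward $z$) and the two neighbours of $w$ on the $wz$-paths---three branches, not four; extending further to $z$ does not reliably recover a fourth. The correct fix is much simpler than the case analysis you anticipate: take $V_0=Q_u$, the path in $G$ corresponding to the \emph{single} edge $uw$ of $M_1$. Then $u$ contributes two external neighbours (the first vertices of $P_1$ and $P_2$) and $w$ contributes two more (the first vertices of the two $wz$-paths); these lie on four internally disjoint paths with pairwise disjoint endpoint-sets $\{u,v\}$ and $\{w,z\}$, so they are automatically four distinct vertices. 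This handles $M_1$ uniformly, with no short/long split at all.

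One further minor point: in your loop case, $C-w$ is a single path, not two arcs; what you actually need are the two neighbours of $w$ on $C$ taken as singleton branches, which is fine since $G$ is simple and hence $|C|\ge 3$.
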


\begin{proof}
Since $S^{n-4}(K_4)$ is $K_{1,4}$-minor-free, we have $e(G)\geq e(S^{n-4}(K_4))=n+2$.
On the other hand, for $n\geq 6$, we obtain $e(G)\leq n+2$ by Lemma \ref{lem3.1},
and for $n=5$, we can see that $e(G)\leq \lfloor\frac{3\times 5}{2}\rfloor=7$ since $\Delta(G)\leq3$.
Thus, $e(G)=n+2$.

Let $U_i$ be the set of vertices of degree $i$ in $G$.
Then, we have
$$2n-|U_1|+|U_3|=|U_1|+2|U_2|+3|U_3|=2e(G)=2n+4,$$
which yields $|U_3|-|U_1|=4$.

We now prove that $G\cong S^{n-4}(K_4)$ by induction on $n$.
For the base case $n=5$,
combining $|U_1|+|U_2|+|U_3|=5$ and $|U_3|-|U_1|=4$ gives $2|U_1|+|U_2|=1$.
Thus, $|U_1|=0, |U_2|=1$ and $|U_3|=4$.
Assume that $U_2=\{u\}$ and $U_3=\{v_1,v_2,v_3,v_4\}$, where $N_G(u)=\{v_1,v_2\}$.
Then, $N_G(v_3)=\{v_1,v_2,v_4\}$ and $N_G(v_4)=\{v_1,v_2,v_3\}$.
It follows that $G\cong S^1(K_4)$.

For the inductive step, assume $n\geq 6$.
We first prove that $U_1=\varnothing$.
Suppose, to the contrary that, $u\in U_1$.
Then, $G\!-\!\{u\}$ is a $K_{1,4}$-minor-free connected graph with $n+1$ edges.
By the induction hypothesis, $G\!-\!\{u\}\cong S^{n-5}(K_4)$.
However, regardless of whether $u$ is adjacent to a vertex of degree two or three in $G\!-\!\{u\}$,
$G$ always contains a $K_{1,4}$ minor, a contradiction.
Thus, $U_1=\varnothing$,
which implies that $|U_3|=|U_1|+4=4$ and $|U_2|=n-4\geq 2$.
Let $U_3=\{v_1,v_2,v_3,v_4\}$.
In the following, we distinguish two cases.

\begin{figure}
\centering
\begin{tikzpicture}[x=0.80mm, y=0.80mm, inner xsep=0pt, inner ysep=0pt, outer xsep=0pt, outer ysep=0pt]
\path[line width=0mm] (71.44,60.45) rectangle +(120.20,39.83);
\definecolor{L}{rgb}{0,0,0}
\path[line width=0.30mm, draw=L] (80.00,90.00) -- (80.00,70.00);
\path[line width=0.30mm, draw=L] (80.00,70.00) -- (120.00,70.00);
\path[line width=0.30mm, draw=L] (120.00,90.00) -- (120.00,70.00);
\path[line width=0.30mm, draw=L] (80.00,90.00) -- (93.33,90.00);
\path[line width=0.30mm, draw=L] (120.00,90.00) -- (106.67,90.00);
\path[line width=0.15mm, draw=L] ;
\path[line width=0.30mm, draw=L, dash pattern=on 0.30mm off 0.50mm] (93.33,90.00) -- (106.67,90.00);
\definecolor{F}{rgb}{0,0,0}
\path[line width=0.30mm, draw=L, fill=F] (100.00,70.00) circle (1.00mm);
\path[line width=0.30mm, draw=L, fill=F] (80.00,70.00) circle (1.00mm);
\path[line width=0.30mm, draw=L, fill=F] (120.00,70.00) circle (1.00mm);
\path[line width=0.30mm, draw=L, fill=F] (80.00,90.00) circle (1.00mm);
\path[line width=0.30mm, draw=L, fill=F] (120.00,90.00) circle (1.00mm);
\path[line width=0.30mm, draw=L, fill=F] (93.33,90.00) circle (1.00mm);
\path[line width=0.30mm, draw=L, fill=F] (106.67,90.00) circle (1.00mm);
\path[line width=0.30mm, draw=L] (80.00,70.00) -- (120.00,90.00);
\path[line width=0.30mm, draw=L] (80.00,90.00) -- (120.00,70.00);
\path[line width=0.30mm, draw=L] (140.00,90.00) -- (140.00,70.00);
\path[line width=0.30mm, draw=L] (140.00,70.00) -- (180.00,70.00);
\path[line width=0.30mm, draw=L] (180.00,70.00) -- (180.00,90.00);
\path[line width=0.30mm, draw=L] (140.00,90.00) -- (153.33,90.00);
\path[line width=0.30mm, draw=L] (166.67,90.00) -- (180.00,90.00);
\path[line width=0.30mm, draw=L, dash pattern=on 0.30mm off 0.50mm] (153.33,90.00) -- (166.67,90.00);
\path[line width=0.30mm, draw=L, fill=F] (140.00,70.00) circle (1.00mm);
\path[line width=0.30mm, draw=L, fill=F] (180.00,70.00) circle (1.00mm);
\path[line width=0.30mm, draw=L, fill=F] (140.00,80.00) circle (1.00mm);
\path[line width=0.30mm, draw=L, fill=F] (140.00,90.00) circle (1.00mm);
\path[line width=0.30mm, draw=L, fill=F] (153.33,90.00) circle (1.00mm);
\path[line width=0.30mm, draw=L, fill=F] (166.67,90.00) circle (1.00mm);
\path[line width=0.30mm, draw=L, fill=F] (180.00,90.00) circle (1.00mm);
\path[line width=0.30mm, draw=L] (140.00,70.00) -- (180.00,90.00);
\path[line width=0.30mm, draw=L] (140.00,90.00) -- (180.00,70.00);
\draw(78,92.4) node[anchor=base west]{\fontsize{14.23}{17.07}\selectfont $v_1$};
\draw(118.66,92.4) node[anchor=base west]{\fontsize{14.23}{17.07}\selectfont $v_2$};
\draw(78,65.5) node[anchor=base west]{\fontsize{14.23}{17.07}\selectfont $v_3$};
\draw(118,65.5) node[anchor=base west]{\fontsize{14.23}{17.07}\selectfont $v_4$};
\draw(98.6,65.5) node[anchor=base west]{\fontsize{14.23}{17.07}\selectfont $u$};
\draw(138,92.4) node[anchor=base west]{\fontsize{14.23}{17.07}\selectfont $v_1$};
\draw(178,92.4) node[anchor=base west]{\fontsize{14.23}{17.07}\selectfont $v_2$};
\draw(138,65.5) node[anchor=base west]{\fontsize{14.23}{17.07}\selectfont $v_3$};
\draw(178,65.5) node[anchor=base west]{\fontsize{14.23}{17.07}\selectfont $v_4$};
\draw(142,79) node[anchor=base west]{\fontsize{14.23}{17.07}\selectfont $u$};
\draw(90.6,92.4) node[anchor=base west]{\fontsize{14.23}{17.07}\selectfont $w_1$};
\draw(150.42,92.4) node[anchor=base west]{\fontsize{14.23}{17.07}\selectfont $w_1$};
\draw(103.3,92.4) node[anchor=base west]{\fontsize{14.23}{17.07}\selectfont $w_{n-5}$};
\draw(164.43,92.4) node[anchor=base west]{\fontsize{14.23}{17.07}\selectfont $w_{n-5}$};
\end{tikzpicture}
\caption{The extremal graph $G$ in Case 2.}{\label{figure.1}}
\end{figure}
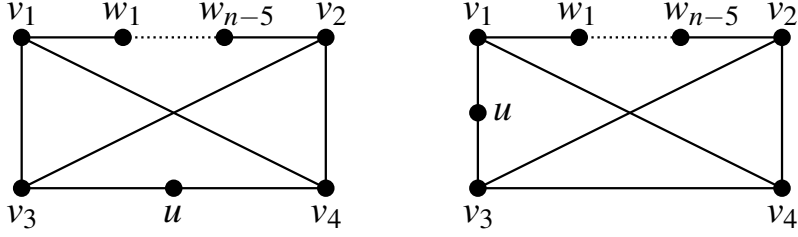

{\textbf{Case 1. Every vertex in $U_2$ is contained in a triangle.}}
\vspace{1mm}

Recall that $|U_2|\geq2$.
Choose $u_1,u_2\in U_2$.
Suppose first that $u_1,u_2$ share two common neighbors, $w_1$ and $w_2$.
Since $u_i$ belongs to
a triangle for $i\in \{1,2\}$,
it follows that $w_1w_2\in E(G)$.
Now, $d_G(w_1)=d_G(w_2)=3$, and hence, $G[\{u_1,u_2,w_1,w_2\}]$ is a component of $G$,
which contradicts the fact that $G$ is a connected graph of order $n\geq6$.

Suppose now that $N_G(u_1)\cap N_G(u_2)=\{w\}$.
We first claim that $u_1u_2\in E(G)$. 
If not, there exists $w_i\in N_G(u_i)\setminus\{w\}$ for $i\in\{1,2\}$.
Clearly, $\{w_1,w_2\}\cap \{u_1,u_2\}=\varnothing$.
Since $u_i$ belongs to a triangle,
we have $ww_i\in E(G)$,
and hence, $d_G(w)\geq4$,
which contradicts $\Delta(G)\leq3$.
Thus, $u_1u_2\in E(G)$. 
Since $n\geq6$ and $G$ is connected, $w$ must have a third neighbor $u_3$ with $d_G(u_3)\geq2$.
Furthermore, $u_3\in U_2$ (otherwise,
$G$ would contain a $K_{1,4}$ minor).
Let $N_G(u_3)=\{w,u_4\}$.
Then, $wu_4\in E(G)$,
which implies $d_G(w)\geq4$, a contradiction.

We now have $N_G(u_1)\cap N_G(u_2)=\varnothing$,
and $u_1u_2\notin E(G)$ since $u_1$ lies in a triangle.
Let $P=u_1w_1\dots w_su_2$ be a shortest $(u_1,u_2)$-path,
and let $N_G(u_1)=\{w_1,u_3\}$ and $N_G(u_2)=\{w_s,u_4\}$.
Then, $u_3\neq u_4$ and $u_3,u_4\notin V(P)$.
Furthermore, $w_1u_3,w_su_4\in E(G)$, as $u_1,u_2\in U_2$.
Thus, $u_1,u_3\in N_G(w_1)$ and $u_2,u_4\in N_G(w_s)$.
By contracting the subpath $w_1\ldots w_s$ as a vertex,
we obtain a copy of $K_{1,4}$, a contradiction.
The proof of Case 1 is complete.

\vspace{1mm}
{\textbf{Case 2. There exists a vertex $u\in U_2$ with two non-adjacent neighbors.}}
\vspace{1mm}

Let $u_1,u_2$ be two non-adjacent neighbors of $u$,
and let $G'$ be the graph obtained from $G$ by contracting the path $u_1uu_2$
as an edge $u_1u_2$.
Then, $e(G')=e(G)-1=|G'|+2$.
Since $G$ is $K_{1,4}$-minor-free, $G'$ is too.
By the induction hypothesis, $G'\cong S^{n-5}(K_4)$.
Let $P$ be the induced path of length $n-4$ in $G'$.
Then, both ends are of degree three in $G'$ and $G$.
We may assume $P=v_1w_1\ldots w_{n-5}v_2$,
where $v_1,v_2\in U_3$.

Suppose that $u_1u_2\in E(G')\setminus E(P)$.
Then, $u_1,u_2$ are of degree three in both $G'$ and $G$.
Now, if $\{u_1,u_2\}=\{v_3,v_4\}$, then $G[\{v_1,v_2,v_3,v_4,w_1,u\}]$ contains
a $K_{1,4}$ minor (see Fig. \!\!\ref{figure.1}), a contradiction.
Thus, $u_1\in\{v_1,v_2\}$ and $u_2\in\{v_3,v_4\}$.
Without loss of generality, assume $\{u_1,u_2\}=\{v_1,v_3\}$.
In this case, $G[\{v_1,v_2,v_3,v_4,w_1,u\}]$ still contains a $K_{1,4}$ minor,
leading to a contradiction.
Therefore, $u_1u_2\in E(P)$, and
thus $G\cong S^{n-4}(K_4)$.
\end{proof}

A member $H$ of $\mathbb{H}$ is said to be \emph{minimal},
if: (i) $\gamma_{H}=\gamma_\mathbb{H}$; and (ii) subject to (i), $|H|$ is also minimal.
It is evident that all minimal graphs have the same independence number. 
Therefore, we can set $\alpha_\mathbb{H}:=\alpha_{H^*}$
for any minimal graph $H^*$.
Furthermore, we define
$C_\mathbb{H}:=\min C_{H^*}$, where $C_{H^*}=2^{|H^*|+1}e(H^*)$ and $H^*$ ranges over all minimal graphs.
Recall that $\gamma_{H^*}=|H^*|-\alpha_{H^*}-1$.
Hence, we have $\gamma_\mathbb{H}+\alpha_\mathbb{H}<C_\mathbb{H}.$

\begin{lem}\label{lem2.3}
$B_{\gamma_H+1,\alpha_H}$ contains an $H$ minor.
\end{lem}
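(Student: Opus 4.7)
The plan is to exhibit $H$ directly as a spanning subgraph of $B_{\gamma_H+1,\alpha_H}$; since any spanning subgraph is in particular a minor, this will immediately imply the claim. The key arithmetic observation is that $B_{\gamma_H+1,\alpha_H}$ has exactly $(\gamma_H+1)+\alpha_H=|H|$ vertices, so no contractions or deletions are needed.

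More concretely, I would first fix a maximum independent set $I\subseteq V(H)$, so that $|I|=\alpha_H$ and $|V(H)\setminus I|=|H|-\alpha_H=\gamma_H+1$. Then I would describe $B_{\gamma_H+1,\alpha_H}$ as the join of a clique $K$ on a vertex set $A$ with $|A|=\gamma_H+1$ and an independent set $B$ with $|B|=\alpha_H$. Define a bijection $\varphi:V(H)\to A\cup B$ that sends $V(H)\setminus I$ to $A$ and $I$ to $B$.

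The verification is then a one-line check. For any edge $uv\in E(H)$, at least one endpoint lies outside $I$ by the independence of $I$, say $u\in V(H)\setminus I$, so $\varphi(u)\in A$. Since $\varphi(u)$ is adjacent in $B_{\gamma_H+1,\alpha_H}$ to every other vertex (vertices of $A$ via the clique edges, vertices of $B$ via the join), we have $\varphi(u)\varphi(v)\in E(B_{\gamma_H+1,\alpha_H})$. Hence $\varphi$ embeds $H$ as a spanning subgraph, and the conclusion follows.

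I do not expect any real obstacle here; the statement is essentially a definitional observation (and indeed the paragraph right before the lemma already flags that ``$H$ is a spanning subgraph of $B_{\gamma_H+1,\alpha_H}$''). The only subtle point is to make sure the clique part of the book is large enough to accommodate the non-independent vertices of $H$, which is exactly the identity $|H|-\alpha_H=\gamma_H+1$ supplied by the definition of $\gamma_H$.
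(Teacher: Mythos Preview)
Your proposal is correct and matches the paper's approach exactly: the paper simply observes (in the sentence preceding the lemma) that $H$ is a spanning subgraph of $B_{\gamma_H+1,\alpha_H}$ and states the lemma without further proof. Your write-up just spells out this observation in detail.
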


\begin{proof}
Observe that $H$ is a spanning subgraph of $B_{\gamma_H+1,\alpha_H}$.
The statement follows.
\end{proof}

An \emph{elementary operation} on a graph is one of the following:
deleting a vertex, deleting an edge, or contracting an edge.
Clearly, a graph $G$ contains an $H$ minor
if $H$ can be obtained from $G$ by a sequence of elementary operations.

\begin{lem}\label{lem2.4}
$B_{\gamma_{\mathbb{H}},n-\gamma_{\mathbb{H}}}$ is $\mathbb{H}$-minor-free,
and $spex(n,\mathbb{H}_{minor})\geq\sqrt{\gamma_{\mathbb{H}}(n-\gamma_{\mathbb{H}})}$.
\end{lem}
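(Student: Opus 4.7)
The plan is to prove the two assertions separately, and both follow fairly directly from unpacking the definition of $\gamma_{\mathbb{H}}$ and from a standard observation about spanning bipartite subgraphs.

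\textbf{Part 1: minor-freeness.} I would argue by contradiction, using branch sets. Suppose some $H\in\mathbb{H}$ is a minor of $G:=B_{\gamma_{\mathbb{H}},n-\gamma_{\mathbb{H}}}$. Write $V(G)=L\cup I$, where $L$ is the $\gamma_{\mathbb{H}}$-clique of dominating vertices and $I$ is the independent set of size $n-\gamma_{\mathbb{H}}$. Take connected, pairwise disjoint branch sets $\{V_x:x\in V(H)\}$ in $G$ such that for every edge $xy\in E(H)$ there is an edge of $G$ between $V_x$ and $V_y$. Since $I$ is independent in $G$, any branch set contained entirely in $I$ is a singleton, and between any two such singleton branch sets there is no edge of $G$; hence the corresponding vertices in $H$ are pairwise non-adjacent. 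Because $|L|=\gamma_{\mathbb{H}}$, at most $\gamma_{\mathbb{H}}$ branch sets meet $L$, so at least $|H|-\gamma_{\mathbb{H}}$ of the branch sets lie in $I$. These yield an independent set of size at least $|H|-\gamma_{\mathbb{H}}$ in $H$, giving $\alpha_H\geq |H|-\gamma_{\mathbb{H}}$, i.e.\ $\gamma_H=|H|-\alpha_H-1\leq \gamma_{\mathbb{H}}-1$. This contradicts $\gamma_H\geq \gamma_{\mathbb{H}}$ from the defining minimum in~\eqref{align.01}.

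\textbf{Part 2: spectral lower bound.} The graph $B_{\gamma_{\mathbb{H}},n-\gamma_{\mathbb{H}}}$ contains $K_{\gamma_{\mathbb{H}},n-\gamma_{\mathbb{H}}}$ as a (spanning) subgraph, obtained by deleting the edges inside the $\gamma_{\mathbb{H}}$-clique $L$ but keeping all edges between $L$ and $I$. Since adjacency spectral radius is monotone under subgraph inclusion,
$$\rho\big(B_{\gamma_{\mathbb{H}},n-\gamma_{\mathbb{H}}}\big)\ \geq\ \rho\big(K_{\gamma_{\mathbb{H}},n-\gamma_{\mathbb{H}}}\big)\ =\ \sqrt{\gamma_{\mathbb{H}}(n-\gamma_{\mathbb{H}})}.$$
Combining with Part 1, $B_{\gamma_{\mathbb{H}},n-\gamma_{\mathbb{H}}}$ is a feasible competitor in the extremal problem, hence $\mathrm{spex}(n,\mathbb{H}_{minor})\geq \sqrt{\gamma_{\mathbb{H}}(n-\gamma_{\mathbb{H}})}$.

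The only subtle point is Part 1, and even there the entire argument is: in $B_{\gamma_{\mathbb{H}},n-\gamma_{\mathbb{H}}}$ the only way to fit more than $\gamma_{\mathbb{H}}+\alpha_H$ pairwise disjoint connected sets is to force more than $\alpha_H$ of them into the independent side $I$. I do not anticipate any genuine obstacle; the bound on $|H|-\alpha_H-1$ from the definition of $\gamma_{\mathbb{H}}$ is precisely calibrated for this branch-set count to fail.
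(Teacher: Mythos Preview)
Your proof is correct. Part 2 matches the paper exactly. For Part 1 you take a slightly different route from the paper, and it is worth noting the contrast.

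The paper proceeds in two steps. First it fixes an individual $H\in\mathbb{H}$ and shows that $B_{\gamma_H,n-\gamma_H}$ contains no \emph{subgraph} copy of $H$ (by the same counting of vertices forced into the independent side). Then it upgrades ``no subgraph'' to ``no minor'' via a structural observation: any elementary operation (vertex deletion, edge deletion, edge contraction) applied to a subgraph of a generalized book again yields a subgraph of that book, so every minor of $B_{\gamma_H,n-\gamma_H}$ is already a subgraph of it. Finally, since $\gamma_{\mathbb{H}}\le\gamma_H$, the smaller book $B_{\gamma_{\mathbb{H}},n-\gamma_{\mathbb{H}}}$ sits inside $B_{\gamma_H,n-\gamma_H}$ and inherits $H$-minor-freeness.

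Your branch-set argument bypasses both the ``minor $\Rightarrow$ subgraph'' reduction and the passage through the individual $\gamma_H$: you work directly in $B_{\gamma_{\mathbb{H}},n-\gamma_{\mathbb{H}}}$ and read off an independent set in $H$ from the branch sets trapped in $I$. This is shorter and arguably cleaner. What the paper's route buys is the reusable side fact that generalized books are minor-closed (every minor is a subgraph), which is conceptually pleasant even if not needed elsewhere in the paper.
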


\begin{proof}
Choose an arbitrary member $H\in\mathbb{H}$.
We first claim that $B_{\gamma_H,n-\gamma_H}$ contains no copy of $H$.
Indeed, if $H\subseteq B_{\gamma_H,n-\gamma_H}$,
let $T$ be the $\gamma_H$-clique and $S$ be the independent set of $n-\gamma_H$ vertices 
in $B_{\gamma_{H},n-\gamma_{H}}$.
Then, we have
$$|V(H)\cap S|=|V(H)\setminus T|\geq|H|-\gamma_H=\alpha_{H}+1.$$
Furthermore, since $V(H)\cap S$ is also an independent set in $H$,
we obtain $\alpha_{H}\geq|V(H)\cap S|\geq\alpha_{H}+1$, a contradiction.
Therefore, the claim holds.

We now show that $B_{\gamma_{H},n-\gamma_{H}}$ is $H$-minor-free.
Suppose, to the contrary, that $B_{\gamma_{H},n-\gamma_{H}}$ contains an $H$ minor.
Then, $H$ can be obtained from $B_{\gamma_{H},n-\gamma_{H}}$ by a sequence of elementary operations.
These elementary operations give rise to a graph sequence
$H_0,H_1,\cdots,H_a$, where $H_a=H$.
From the structure of a book,
we know that every elementary operation
on a subgraph of $B_{\gamma_{H},n-\gamma_{H}}$
results in a new subgraph of $B_{\gamma_{H},n-\gamma_{H}}$.
This implies that $H\subseteq B_{\gamma_{H},n-\gamma_{H}}$,
contradicting the claim proved above.
Thus, $B_{\gamma_{H},n-\gamma_{H}}$ is $H$-minor-free.

In view of (\ref{align.01}), we have
$\gamma_{\mathbb{H}}\leq\gamma_H$, and thus
$B_{\gamma_{\mathbb{H}},n-\gamma_{\mathbb{H}}}\subseteq B_{\gamma_{H},n-\gamma_{H}}$.
Since $B_{\gamma_{H},n-\gamma_{H}}$ is $H$-minor-free, $B_{\gamma_{\mathbb{H}},n-\gamma_{\mathbb{H}}}$
must also be $H$-minor-free.
Considering the choice of $H$, we can conclude that
$B_{\gamma_{\mathbb{H}},n-\gamma_{\mathbb{H}}}$
is $\mathbb{H}$-minor-free,
and so is $K_{\gamma_{\mathbb{H}},n-\gamma_{\mathbb{H}}}$.
Hence,
$spex(n,\mathbb{H}_{minor})\geq
\rho(K_{\gamma_{\mathbb{H}},n-\gamma_{\mathbb{H}}})
=\sqrt{\gamma_{\mathbb{H}}(n-\gamma_{\mathbb{H}})}$.
\end{proof}

We conclude this section with the following stability result. 
For the sake of readability, we postpone its proof until the last section.

\begin{thm}\label{thm2.1}
Let $G$ be a graph of sufficiently large order $n$.
Let $X$ be a non-negative eigenvector corresponding to $\rho(G)$
with $x_{u^*}=\max_{u\in V(G)}x_u$.
If $\rho(G)\geq\sqrt{\gamma_{\mathbb{H}}(n-\gamma_{\mathbb{H}})}$,
then either $G$ contains an $H$ minor for some $H\in\mathbb{H}$,
or there exists a set $L$ of exactly $\gamma_\mathbb{H}$ vertices
such that $x_u\ge\big(1-\frac{1}{2(10C_\mathbb{H})^2}\big)x_{u^*}$
and $d_G(u)\ge\big(1-\frac{1}{(10C_\mathbb{H})^2}\big)n$
for every $u\in L$.
\end{thm}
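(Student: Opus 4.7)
The plan is to suppose that $G$ contains no $H$ minor for any $H\in\mathbb{H}$ (otherwise the first alternative already holds) and build the desired set $L$. Fix a minimal $H^*\in\mathbb{H}$ with $\gamma_{H^*}=\gamma_\mathbb{H}$ and $C_{H^*}=C_\mathbb{H}$; Lemma \ref{lem2.2} then yields $e(G)<C_\mathbb{H} n$. Normalize the Perron eigenvector $X$ so that $x_{u^*}=1$; every $x_v$ lies in $[0,1]$. Set $\varepsilon:=1/(10C_\mathbb{H})^2$ and take as candidate set
\[
L:=\{v\in V(G) : x_v\ge 1-\varepsilon/2\}.
\]
The task is to verify: (i) $|L|\ge\gamma_\mathbb{H}$; (ii) every $u\in L$ satisfies $d_G(u)\ge(1-\varepsilon)n$; and (iii) $|L|\le\gamma_\mathbb{H}$. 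The workhorse identity, obtained by iterating $\rho x_v=\sum_{u\sim v}x_u$ once, is
\[
\rho^2 \;=\; \rho^2 x_{u^*} \;=\; \sum_{w\in V(G)} x_w\,|N(u^*)\cap N(w)|,
\]
which we compare to $\rho^2\ge\gamma_\mathbb{H}(n-\gamma_\mathbb{H})$.

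For (i), I split the right-hand side according to whether $w\in L$. The $L$-contribution is bounded by $|L|\cdot d_G(u^*)\le|L|\,n$, and the $(V(G)\setminus L)$-contribution carries the attenuation factor $x_w\le 1-\varepsilon/2$. Applying Lemma \ref{lem2.2} to the still $\mathbb{H}$-minor-free subgraph $G-L$ controls the edges within $V(G)\setminus L$, which in turn bounds the walks from $u^*$ that leave $L$ through a non-$L$ neighbor; at that point $|L|\le\gamma_\mathbb{H}-1$ becomes incompatible with the lower bound on $\rho^2$. For (ii), use $\rho\sum_v x_v=\sum_v d_G(v)x_v\le 2e(G)\le 2C_\mathbb{H} n$ to get $\sum_v x_v\le 2C_\mathbb{H}\sqrt{n/\gamma_\mathbb{H}}$, then partition $V(G)$ by eigenvector entry size. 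Every vertex of moderate or large entry forces $d_G(v)\ge c\,\rho$, so there are only $O(\sqrt{n})$ of them, while tiny-entry neighbors contribute at most $d_G(u)\cdot(\varepsilon/10)$ to $\sum_{v\sim u}x_v$. Combined with $\sum_{v\sim u}x_v=\rho x_u\ge(1-\varepsilon/2)\rho$, this bookkeeping forces at most $\varepsilon n$ non-neighbors of $u$.

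For (iii), suppose $|L|\ge\gamma_\mathbb{H}+1$. By (ii) each $u\in L$ has at most $\varepsilon n$ non-neighbors, so any $\gamma_\mathbb{H}+1$ vertices of $L$ share at least $(1-(\gamma_\mathbb{H}+1)\varepsilon)n-(\gamma_\mathbb{H}+1)$ common neighbors, which exceeds $\alpha_\mathbb{H}+\binom{\gamma_\mathbb{H}+1}{2}$ for $n$ large. Use $\binom{\gamma_\mathbb{H}+1}{2}$ of these common neighbors to contract any non-edge inside the chosen $(\gamma_\mathbb{H}+1)$-set, producing $K_{\gamma_\mathbb{H}+1}$; the remaining $\alpha_\mathbb{H}$ common neighbors then form an independent set, realizing $B_{\gamma_\mathbb{H}+1,\alpha_\mathbb{H}}$ as a minor of $G$. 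Lemma \ref{lem2.3} then furnishes an $H^*$ minor, contradicting minor-freeness.

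The main obstacle is step (i). The edge bound $e(G)<C_\mathbb{H} n$ alone only forces $\rho^2=O(C_\mathbb{H} n)$, which is of the same order as the target $\gamma_\mathbb{H}(n-\gamma_\mathbb{H})$, so the $(1-\varepsilon/2)$ attenuation on its own is too weak. Closing the gap requires applying Lemma \ref{lem2.2} not globally but to the minor-free induced subgraph $G-L$ (and to $G[N(u^*)\setminus L]$), and tracking 2-walks from $u^*$ by whether the middle vertex and landing vertex lie in $L$. The constant $1/(10C_\mathbb{H})^2$ is tuned precisely so that the resulting loss from $V(G)\setminus L$ exactly beats the slack permitted by the Thomason-type edge bound, closing the inequality with room to spare for the lower-order errors.
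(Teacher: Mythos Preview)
Your step (iii) is fine and matches the paper, but steps (i) and (ii) both have genuine gaps that your single-threshold set $L=\{v:x_v\ge 1-\varepsilon/2\}$ cannot close.

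For (ii), the bookkeeping you describe only yields $d_G(u)\gtrsim \rho/\varepsilon=O(\sqrt{n})$, not $(1-\varepsilon)n$. From $\sum_{v\sim u}x_v=\rho x_u\ge(1-\varepsilon/2)\rho$ and the split into tiny ($x_v<\varepsilon/10$) and non-tiny neighbors you get
\[
(1-\varepsilon/2)\rho \;\le\; d_G(u)\cdot\tfrac{\varepsilon}{10}+O(\sqrt{n}),
\]
hence $d_G(u)\ge (10/\varepsilon)\rho(1+o(1))\asymp C_\mathbb{H}^2\sqrt{n}$, which is far from linear in $n$. Knowing that $x_u$ is close to $1$ simply does not force $u$ to have linearly many neighbors without further structural input.

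For (i), the attenuation $1-\varepsilon/2$ is negligible compared to the slack in the edge bound. With your split of $\rho^2=\sum_w x_w|N(u^*)\cap N(w)|$, the non-$L$ part is at most $(1-\varepsilon/2)\sum_{v\in N(u^*)}d(v)\le (1-\varepsilon/2)\cdot 2C_\mathbb{H} n$, which dominates $\gamma_\mathbb{H} n$; the gain $\varepsilon C_\mathbb{H} n\asymp n/C_\mathbb{H}$ is nowhere near enough to force $|L|\ge\gamma_\mathbb{H}$. Applying Lemma~\ref{lem2.2} to $G-L$ does not help: it still gives $e(G-L)\le C_\mathbb{H} n$, the same order.

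The paper overcomes both obstacles with a \emph{multi-scale} family $L^\lambda=\{v:x_v\ge(10C_\mathbb{H})^{-\lambda}\}$. First it shows $|L^4|$ is bounded by an absolute constant, so vertices outside $L^4$ carry a genuinely tiny factor $(10C_\mathbb{H})^{-4}$ (not $1-\varepsilon/2$). Then, crucially, it bounds $e(\overline{L}_1^4,L_{1,2}^4)$ by $(\gamma_\mathbb{H}-1)|N_1(u)|+o(n)$ via a pigeonhole argument: if too many vertices of $\overline{L}_1^4$ had $\ge\gamma_\mathbb{H}$ neighbors in the constant-size set $L_{1,2}^4$, one could extract $K_{\gamma_\mathbb{H}+1,|H^*|}$ and hence an $H^*$ minor. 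This structural edge bound is what converts the second-order expansion of $\rho^2 x_u$ into the linear degree estimate $|N_1(u)|\ge(x_u-(10C_\mathbb{H})^{-3})n$, and it is also what drives the contradiction when $|L^1|\le\gamma_\mathbb{H}-1$. Your sketch never invokes such a cross-edge bound, and without it neither (i) nor (ii) can be completed.
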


\section{An absorbing method for characterizing $SPEX(n,\mathbb{H}_{minor})$}\label{section3}

In this section, we prove Theorems \ref{thm1.1}, \ref{thm1.2} and \ref{thm1.3}. 
Choose an arbitrary $G^*\in SPEX(n,\mathbb{H}_{minor})$.
Let $X^*=(x_1,\ldots,x_n)^T$ be a non-negative unit eigenvector corresponding to $\rho(G^*)$,
and let $u^*$ be a vertex in $V(G^*)$ such that $x_{u^*}=\max_{u\in V(G^*)}x_u$.
Define $\rho^*:=\rho(G^*)$.
By Lemma \ref{lem2.4}, we have $\rho^*\geq
\sqrt{\gamma_{\mathbb{H}}(n-\gamma_{\mathbb{H}})}$.
Furthermore, by Theorem \ref{thm2.1}, we obtain the following proposition.

\begin{pro}\label{pro3.0}
$G^*$ contains a set $L$ of exactly $\gamma_\mathbb{H}$ vertices
such that $x_u\ge\big(1-\frac{1}{2(10C_\mathbb{H})^2}\big)x_{u^*}$
and $d_{G^*}(u)\ge\big(1-\frac{1}{(10C_\mathbb{H})^2}\big)n$
for every $u\in L$.
\end{pro}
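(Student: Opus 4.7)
The plan is to observe that Proposition \ref{pro3.0} is essentially a direct specialization of Theorem \ref{thm2.1} to the extremal graph $G^*$. All the hard work is hidden inside Theorem \ref{thm2.1}, whose proof is postponed to the last section; here I only need to verify the hypotheses of that theorem and rule out the unwanted alternative in its conclusion.

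First I would record that $G^*$ is $\mathbb{H}$-minor free, which is immediate from the definition of $SPEX(n,\mathbb{H}_{minor})$. Next, I would verify the spectral lower bound: since $B_{\gamma_\mathbb{H},n-\gamma_\mathbb{H}}$ is $\mathbb{H}$-minor free by Lemma \ref{lem2.4}, it is a candidate competitor in the extremal problem, and so
\[
\rho^*=spex(n,\mathbb{H}_{minor})\ge \rho(B_{\gamma_\mathbb{H},n-\gamma_\mathbb{H}})\ge \sqrt{\gamma_\mathbb{H}(n-\gamma_\mathbb{H})},
\]
again by Lemma \ref{lem2.4}. Thus $G^*$ satisfies the hypothesis of Theorem \ref{thm2.1}, provided $n$ is large enough, which is assumed throughout the paper.

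Finally, I would apply Theorem \ref{thm2.1} to $G^*$ with the non-negative unit eigenvector $X^*$ and maximum entry $x_{u^*}$ fixed as in the setup. The theorem offers two alternatives: either $G^*$ contains an $H$ minor for some $H\in\mathbb{H}$, or there is a set $L$ of exactly $\gamma_\mathbb{H}$ vertices satisfying the claimed eigenvector and degree inequalities. The first alternative is impossible because $G^*$ is $\mathbb{H}$-minor free, so the second must hold, which is exactly the statement of Proposition \ref{pro3.0}.

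There is no real obstacle in this step; the only subtlety is to make sure the chain of inequalities $\rho^*\ge\sqrt{\gamma_\mathbb{H}(n-\gamma_\mathbb{H})}$ is genuinely invoked from Lemma \ref{lem2.4} rather than assumed, and that the alternative in Theorem \ref{thm2.1} is correctly eliminated. The substantive difficulty will surface later, when Theorem \ref{thm2.1} itself must be proved via the stability argument in the final section.
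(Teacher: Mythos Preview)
Your proposal is correct and follows exactly the paper's own approach: the paper deduces Proposition \ref{pro3.0} immediately from Lemma \ref{lem2.4} (giving $\rho^*\ge\sqrt{\gamma_\mathbb{H}(n-\gamma_\mathbb{H})}$) and Theorem \ref{thm2.1}, with the minor alternative ruled out since $G^*\in SPEX(n,\mathbb{H}_{minor})$ is $\mathbb{H}$-minor free by definition. There is nothing to add.
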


We now partition $V(G^*)\setminus L$ into $L'\cup L''$,
where $L''=\{v:~ L\subseteq N_{G^*}(v)\}$.
The key point for Theorem \ref{thm1.1} is to show that $L'=\varnothing$,
which will be proved using an {\it absorbing method}.
Specifically,
we will find an {\it absorbing set} within $L''$,
and then use it to absorb vertices in $L'$.
To this end,
we require some auxiliary definitions and propositions.

\begin{definition}\label{de3.1}
A path $P=v_1v_2\ldots v_s$ (where $v_1=v_s$ is allowed) is called a
{\bf linear path} in $G$, if $P\subseteq G$ and $d_G(v_i)=2$
for each $i\in\{2,\ldots s-1\}$.
A linear path $P$ is said to be {\bf maximal}, if there exists
no any linear path $P'$ such that $P\subseteq P'$ and $P\neq P'$.
\end{definition}

By Definition \ref{de3.1},
every linear path in
$G$ is either an induced path or an induced cycle.
For a connected graph $G$ with $|G|\geq2$,
Definition \ref{de3.1} further implies the following two propositions.

\begin{pro}\label{pro3.1}
Let $P$ be a maximal linear path in $G$.
If $P$ is a path, then each endpoint $v$ satisfies
$d_G(v)\neq2$;
if $P$ is a cycle, then at most one vertex $v$ on $P$ satisfies $d_G(v)\neq2$.
\end{pro}

\begin{pro}\label{pro3.2}
Every connected graph has an edge-decomposition
of its maximal linear paths.
\end{pro}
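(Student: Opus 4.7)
My plan is to associate to each edge $e\in E(G)$ a canonical maximal linear path $P(e)$ containing $e$, and then to verify that two distinct such paths share no edges. The collection of distinct $P(e)$'s will then be the required edge-decomposition.

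Given $e=u_0u_1$, I would extend greedily on the $u_1$-side: if the current right-most vertex $u_k$ satisfies $d_G(u_k)=2$, let $u_{k+1}$ be the neighbor of $u_k$ distinct from $u_{k-1}$, and repeat. Since $G$ is finite, the procedure must terminate, and I expect it to terminate in exactly one of two ways: either (i) it reaches a vertex $u_k$ with $d_G(u_k)\ne 2$, in which case I further extend symmetrically on the $u_0$-side to obtain an induced path, or (ii) $u_k=u_0$, in which case the walk closes into an induced cycle. Either way, this gives the desired $P(e)$, and Proposition~\ref{pro3.1} then confirms that $P(e)$ is maximal.

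The one non-routine step is showing the above dichotomy is exhaustive: namely, that the forward extension cannot revisit an intermediate vertex $u_i$ with $1\le i<k$ before closing at $u_0$. Suppose for contradiction that $k$ is the earliest repeat time and $u_k=u_i$ with $i\ge 1$. Since $d_G(u_i)=2$, the two neighbors of $u_i$ are $u_{i-1}$ and $u_{i+1}$, so $u_{k-1}\in\{u_{i-1},u_{i+1}\}$. In the first sub-case $u_{k-1}=u_{i-1}$ is already a repeat at time $k-1<k$, contradicting the minimality of $k$; in the second, if $i+1<k-1$ the same conclusion applies, while if $k=i+2$ we would have $u_{i+2}=u_i$, violating the no-immediate-backtrack rule used in defining $u_{i+2}$. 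Hence $i=0$, as claimed. This self-avoidance is really the only substantive obstacle; the rest is bookkeeping.

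Finally, for the partition property, I would observe that at every degree-$2$ vertex $v$ the two edges at $v$ must lie in a common maximal linear path, since any linear path through one of them is forced (by $d_G(v)=2$) to traverse the other as well. Consequently the greedy extension starting from $e$ is uniquely determined in both directions, so $P(e)$ does not depend on which of its edges is designated as $e$. Therefore any two distinct maximal linear paths are edge-disjoint, and $\{P(e):e\in E(G)\}$ is the desired edge-decomposition.
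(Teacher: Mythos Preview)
The paper gives no proof of this proposition at all; it simply asserts that Propositions~3.1 and~3.2 follow from Definition~3.1. Your argument is exactly the routine verification one would write to justify that assertion, and it is correct.

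One small imprecision worth noting: in your case~(i), after the forward walk terminates at a new vertex $u_k$ with $d_G(u_k)\neq 2$ and you ``extend symmetrically on the $u_0$-side,'' the backward walk may still close into a cycle by reaching $u_k$. Indeed, your own self-avoidance argument shows the backward walk cannot hit any of $u_1,\dots,u_{k-1}$ (each has degree~$2$ with both neighbors already on the forward walk), but nothing prevents it from hitting $u_k$. If that happens you obtain a cycle through $u_k$ in which every other vertex has degree~$2$, which is again a maximal linear path by Proposition~3.1. So the phrase ``to obtain an induced path'' should read ``to obtain an induced path or a cycle through $u_k$''; nothing else changes.
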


For a graph $H$ with $V(H)=\{v_1,\ldots,v_h\}$,
a {\bf model} of $H$ in a graph $G$ is a collection of vertex-disjoint
connected subgraphs $G_{v_1}$, \ldots, $G_{v_h}$ such that
for any $v_iv_j\in E(H)$, there exists an edge with one end in $G_{v_i}$
and the other in $G_{v_j}$.
It is not hard to see that $G$ has an $H$ minor if and only if
there is a model of $H$ in $G$.
Based on this terminology,
we can introduce the following definition.

\begin{definition}\label{de3.2}
Let $G$ be a graph with an $H$ minor, and let $\{G_{v_i}:~ v_i\in V(H)\}$ be
a model of $H$ in $G$.
Then $(V(G_{v_1}),\dots,V(G_{v_h}))$
is called an {\bf $H$-partition} of $G$.
\end{definition}

Note that $\cup_{v_i\in V(H)}V(G_{v_i})\subseteq V(G)$.
Hence, an $H$-partition of $G$ need not form a partition of $V(G)$,
even though its members are pairwise vertex-disjoint.
An $H$-partition is called \emph{minimal},
if $\sum_{i=1}^{|H|}|G_{v_i}|$ is minimized among all $H$-partitions of $G$.

By Proposition \ref{pro3.0},
$|L|=\gamma_{\mathbb{H}}=\min_{H\in\mathbb{H}}(|H|-\alpha_{H}-1)$,
and every vertex in $L$ has at most
$\frac{n}{(10C_\mathbb{H})^2}$ non-neighbors in $L'$. Hence,
\begin{eqnarray}\label{align.09}
|L'|\leq\frac{n}{(10C_\mathbb{H})^2}|L|
=\frac{\gamma_\mathbb{H}n}{(10C_\mathbb{H})^2}.
\end{eqnarray}

Before proceeding, we need some additional notations.
For a graph $G$ with $u\in V(G)$ and
$S\subseteq V(G)$,
we write $N_{S}(u):=N_G(u)\cap S$ and $d_{S}(u):=|N_S(u)|$.
Let $G\cup G'$ be the union of two vertex-disjoint graphs $G$ and $G'$.
Specially, we use $kG$ to denote the disjoint union of $k$ copies of $G$.
For two disjoint subsets $S,T\subseteq V(G)$,
let $G[S,T]$ be the bipartite subgraph obtained
from $G[S\cup T]$ by deleting all its edges within $S$ and within $T$.
We use $e(S)$ and $e(S,T)$ to denote the numbers of edges in
$G[S]$ and $G[S,T]$, respectively.

\begin{lem}\label{lem3.3}
$d_{L''}(v)\leq \alpha_{\mathbb{H}}$ for each $v\in L'\cup L''$
and $G^*[L'']$ is
$(K_{1,\alpha_{\mathbb{H}}+1}\cup\gamma_{\mathbb{H}}K_1)$-minor-free.
\end{lem}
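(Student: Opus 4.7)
My plan is to prove both statements by contradiction, in each case building a $B_{\gamma_\mathbb{H}+1,\alpha_\mathbb{H}}$ minor of $G^*$ and then invoking Lemma~\ref{lem2.3} to obtain an $H^*$ minor for some minimal $H^*\in\mathbb{H}$, contradicting that $G^*$ is $\mathbb{H}$-minor free. The shared mechanism is that every vertex of $L''$ is joined to all of $L$, so vertices of $L''$ play two complementary roles in the construction: some serve as ``independent leaves'' joined to the core clique, while others are ``helpers'' that get contracted with the $l_i\in L$ so that the contracted set acts as a clique in the minor even though $L$ itself may not be.

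For the first assertion, I would suppose for contradiction that some $v\in L'\cup L''$ satisfies $d_{L''}(v)\ge\alpha_\mathbb{H}+1$, and fix neighbors $v_1,\dots,v_{\alpha_\mathbb{H}+1}\in L''\cap N_{G^*}(v)$. By Proposition~\ref{pro3.0} and~(\ref{align.09}), $|L''|$ is of order $n$, so I can select $\gamma_\mathbb{H}$ additional helpers $w_1,\dots,w_{\gamma_\mathbb{H}}\in L''\setminus\{v,v_1,\dots,v_{\alpha_\mathbb{H}+1}\}$. Writing $L=\{l_1,\dots,l_{\gamma_\mathbb{H}}\}$, I would take the branch sets $U_i=\{l_i,w_i\}$ for $1\le i\le\gamma_\mathbb{H}$, set $U_0=\{v\}$ when $v\in L''$ and $U_0=\{v,v_1\}$ when $v\in L'$, and use the remaining $v_i$'s as singleton branch sets for the independent side. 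Each $U_i$ is connected via $l_iw_i\in E(G^*)$, and every pairwise adjacency required by the $B_{\gamma_\mathbb{H}+1,\alpha_\mathbb{H}}$ model is witnessed by an edge of the form $w_il_j$, $v_1l_j$, or $vv_i$, all of which exist by the defining property of $L''$ or by the choice of $v_i\in N_{G^*}(v)$.

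For the second assertion, I would start from a hypothetical $K_{1,\alpha_\mathbb{H}+1}\cup\gamma_\mathbb{H}K_1$ minor of $G^*[L'']$, realised by disjoint connected branch sets $A_0,A_1,\dots,A_{\alpha_\mathbb{H}+1},B_1,\dots,B_{\gamma_\mathbb{H}}\subseteq L''$, with $A_0$ joined to each $A_i$ inside $L''$. The key step is to absorb the $B_j$'s into $L$ by replacing $\{l_j\}$ with $\{l_j\}\cup B_j$ for each $j$. Exactly as in the first assertion, these enlarged sets form a $\gamma_\mathbb{H}$-clique in the minor, $A_0$ extends it to a $(\gamma_\mathbb{H}+1)$-clique, and any $\alpha_\mathbb{H}$ of the $A_i$'s furnish the independent leaves joined to that clique, producing a $B_{\gamma_\mathbb{H}+1,\alpha_\mathbb{H}}$ minor.

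The main obstacle is that at this point in the paper we do not yet know $L$ to be a clique, so the naive attempt of using $L\cup\{v\}$ (or $L\cup A_0$) directly as the $(\gamma_\mathbb{H}+1)$-clique core fails whenever some pair $l_il_j$ is non-adjacent. The absorbing trick above is the uniform fix: pairing each $l_i$ with a helper $w_i\in L''$ (or with the branch set $B_i$ in part two) forces the contracted pairs to be mutually adjacent, because each helper is adjacent to every vertex of $L$. Checking that enough disjoint helpers exist is a routine counting step from~(\ref{align.09}), and verifying that every required edge in the $B_{\gamma_\mathbb{H}+1,\alpha_\mathbb{H}}$ model is present is immediate from $L''\subseteq N_{G^*}(l_i)$ for every $i$.
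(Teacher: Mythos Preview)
Your proposal is correct and follows essentially the same approach as the paper: both arguments pair each $l_i\in L$ with a helper from $L''$ so that the contracted pairs form the required $(\gamma_\mathbb{H}+1)$-clique in a $B_{\gamma_\mathbb{H}+1,\alpha_\mathbb{H}}$ minor, then invoke Lemma~\ref{lem2.3}. The only cosmetic differences are that the paper handles $v\in L'$ and $v\in L''$ uniformly by always contracting $v$ with one of its $L''$-neighbors (your case split is unnecessary but harmless), and for the second assertion the paper reduces to the first by contracting $G^*[L'']$ down to a copy of $H_0$ rather than building the model directly as you do.
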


\begin{proof}
We first show $d_{L''}(v)\leq \alpha_{\mathbb{H}}$ for $v\in L'\cup L''$.
Suppose to the contrary that $d_{L''}(v_0)\geq\alpha_\mathbb{H}+1$
for some $v_0\in L'\cup L''$.
Let $L=\{u_1,\ldots,u_{\gamma_\mathbb{H}}\}$
and $\{w_0,\ldots,w_{\alpha_\mathbb{H}}\}\subseteq N_{L''}(v_0)$.

By equation (\ref{align.09}),
we obtain $|L''|=n-|L|-|L'|\geq\gamma_\mathbb{H}+\alpha_\mathbb{H}+2$
for sufficiently large $n$.
Thus, we can choose $v_1,\ldots,v_{\gamma_{\mathbb{H}}}$
in $L''\setminus\{v_0,w_0,w_1,\ldots,w_{\alpha_\mathbb{H}}\}$.
Note that $G^*[L,L'']\cong K_{|L|,|L''|}.$
Let $G$ be the graph obtained from $G^*$ by
contracting each edge $u_iv_i$ into a new vertex $\overline{u}_i$
for $i\in\{1,\ldots,\gamma_{\mathbb{H}}\}$.
Then, $\{\overline{u}_1,\ldots,\overline{u}_{\gamma_\mathbb{H}}\}$
is a clique in $G$, and $\overline{u}_i\in N_{G}(w_j)$ for
$i\in\{1,\ldots,\gamma_{\mathbb{H}}\}$ and $j\in\{0,\ldots,\alpha_{\mathbb{H}}\}$.

Furthermore, let $G'$ be the graph obtained from $G$
by contracting the edge $v_0w_0$ into a new vertex $\overline{u}_0$.
Recall that $\overline{u}_i\in N_{G}(w_0)$ for $i\in\{1,\ldots,\gamma_{\mathbb{H}}\}$
and $w_j\in N_{G}(v_0)$ for $j\in\{1,\ldots,\alpha_{\mathbb{H}}\}$.
Thus, $\overline{u}_i,w_j\in N_{G'}(\overline{u}_0)$ for
$i\in\{1,\ldots,\gamma_{\mathbb{H}}\}$ and $j\in\{1,\ldots,\alpha_{\mathbb{H}}\}$.
Now, $G'[\{\overline{u}_i,w_j: ~0\leq i\leq\gamma_{\mathbb{H}};
1\leq j\leq\alpha_{\mathbb{H}}\}]$ contains
$B_{\gamma_{\mathbb{H}}+1,\alpha_{\mathbb{H}}}$ as a spanning subgraph.
By Lemma \ref{lem2.3},
$G'$ contains an $H$ minor for some $H\in \mathbb{H}$, and so does $G^*$, leading to a contradiction.
Hence, $d_{L''}(v)\leq\alpha_{\mathbb{H}}$ for each $v\in L'\cup L''$.

Next, suppose that
$G^*[L'']$ contains an $H_0$ minor, where $H_0\cong K_{1,\alpha_{\mathbb{H}}+1}\cup\gamma_{\mathbb{H}}K_1.$
Let $G''$ be the graph obtained from $G^*$ by replacing $G^*[L'']$
with a copy of $H_0$.
Then, $|H_0|=\gamma_\mathbb{H}+\alpha_\mathbb{H}+2$, and there exists $v_0\in V(H_0)$ with $d_{G''}(v_0)=\alpha_\mathbb{H}+1$.
Using a similar argument as above, $G''$ contains an $H$ minor for some $H\in \mathbb{H}$,
and so does $G^*$, a contradiction.
Therefore, the lemma follows.
\end{proof}

\begin{lem}\label{lem3.4}
 $x_v\le\frac{4 x_{u^*}}{100C_\mathbb{H}}$ for each $v\in L'\cup L''$.
\end{lem}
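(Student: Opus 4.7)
My plan is to split the argument into two cases based on whether $v \in L''$ or $v \in L'$. The first case is essentially immediate: since $N_{G^*}(v) = L$ for every $v \in L''$, the eigenvalue equation yields $\rho^* x_v = \sum_{u \in L} x_u \leq \gamma_\mathbb{H}\, x_{u^*}$. Combined with the lower bound $\rho^* \geq \sqrt{\gamma_\mathbb{H}(n - \gamma_\mathbb{H})}$ from Lemma \ref{lem2.4}, this gives $x_v \leq \sqrt{\gamma_\mathbb{H}/(n - \gamma_\mathbb{H})}\, x_{u^*}$, which is much smaller than $\tfrac{4 x_{u^*}}{100 C_\mathbb{H}}$ for $n$ sufficiently large.

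For $v \in L'$, I would let $M = \max_{w \in L' \cup L''} x_w$, attained at some $v_0$. By the $L''$ case we may assume $v_0 \in L'$. Writing the eigenvalue equation at $v_0$ and partitioning $N_{G^*}(v_0)$ into its intersections with $L$, $L''$, and $L'$, I would bound the three resulting sums by $\gamma_\mathbb{H}\, x_{u^*}$ (trivially), by $\alpha_\mathbb{H}\cdot(\gamma_\mathbb{H}/\rho^*)\, x_{u^*}$ (using Lemma \ref{lem3.3} to get $d_{L''}(v_0) \leq \alpha_\mathbb{H}$, together with the bound from the $L''$-case; this term is negligible for large $n$), and by $d_{L'}(v_0)\, M$ respectively. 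Rearranging yields
\[
\bigl(\rho^* - d_{L'}(v_0)\bigr) M \;\leq\; \bigl(\gamma_\mathbb{H} + o(1)\bigr) x_{u^*},
\]
so to obtain the desired inequality it suffices to show $d_{L'}(v_0) \leq \rho^* - 25 C_\mathbb{H} \gamma_\mathbb{H}(1+o(1))$. Since $\rho^* \geq \sqrt{\gamma_\mathbb{H}(n-\gamma_\mathbb{H})}$ grows with $n$, this essentially amounts to excluding a $d_{L'}(v_0)$ as large as $\rho^*$.

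The main obstacle is bounding $d_{L'}(v_0)$, because the naive estimate $d_{L'}(v_0) \leq |L'| \leq \gamma_\mathbb{H} n/(100 C_\mathbb{H}^2)$ is too weak for $n$ large. My plan is to extend the minor-building construction from the proof of Lemma \ref{lem3.3}. Let $A = L' \cap \{w : L \subseteq N_{G^*}(w)\}$ be the $L'$-vertices adjacent to all of $L$, and put $B' = L' \setminus A$. Repeating the Lemma \ref{lem3.3} contraction (choose disjoint $v_1, \ldots, v_{\gamma_\mathbb{H}} \in L''$, contract each $u_i v_i$ to form a $\gamma_\mathbb{H}$-clique, then contract $v_0 w_0$) with $w_0, \ldots, w_{\alpha_\mathbb{H}}$ now ranging over $A \cup L''$ gives $d_{A \cup L''}(v_0) \leq \alpha_\mathbb{H}$. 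By Proposition \ref{pro3.0} we have $|B'| \leq \gamma_\mathbb{H} n/(100 C_\mathbb{H}^2)$, and a pigeonhole argument on which $u_* \in L$ is missed produces many neighbors of $v_0$ in $B'$ that share a common non-neighbor $u_* \in L$; these are still adjacent to $L \setminus \{u_*\}$, and by routing the missing edge to $u_*$ through a suitable $L''$-vertex adjacent to $u_*$, a $B_{\gamma_\mathbb{H}+1, \alpha_\mathbb{H}}$-minor, and hence an $H$-minor via Lemma \ref{lem2.3}, can again be produced whenever $d_{B'}(v_0)$ is too large. This forces the required upper bound on $d_{L'}(v_0) = d_A(v_0) + d_{B'}(v_0)$, completing the proof.
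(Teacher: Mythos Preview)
Your treatment of the $L''$ case and the bound $d_{A\cup L''}(v_0)\le\alpha_{\mathbb H}$ via the Lemma~\ref{lem3.3} construction are fine, but the argument for bounding $d_{B'}(v_0)$ does not go through. Two points break down. First, after pigeonholing on a missed vertex $u_*\in L$, you assert that the selected $w_j\in B'$ ``are still adjacent to $L\setminus\{u_*\}$''; this is unjustified, since a vertex in $B'$ may miss several vertices of $L$. Second, and more seriously, the ``routing through a suitable $L''$-vertex'' step requires each $w_j$ to reach $u_*$ via some $z\in L''$; but nothing prevents $w_j\in L'$ from having \emph{no} neighbour in $L''$ at all (Lemma~\ref{lem3.3} only gives the upper bound $d_{L''}(w_j)\le\alpha_{\mathbb H}$). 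In that situation there is no branch set you can legally enlarge to create the missing $w_j$--$u_*$ connection, and the minor construction collapses. Since the $H$-minor-free hypothesis only controls the \emph{edge count} of $G^*[L']$ (Lemma~\ref{lem2.2}), not its maximum degree, there is no evident way to rescue a pointwise bound on $d_{L'}(v_0)$.

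The paper sidesteps this entirely by never bounding $d_{L'}(v_0)$. Instead it bounds the \emph{sum} $\sum_{w\in L'}x_w$: from $\rho^*x_w\le d_{G^*}(w)x_{u^*}$ one gets, after summing over $w\in L'$,
\[
\rho^*\sum_{w\in L'}x_w\;\le\;\bigl(C_{\mathbb H}|L'|+2e(L')\bigr)x_{u^*}\;\le\;3C_{\mathbb H}|L'|\,x_{u^*},
\]
using $e(L')<C_{\mathbb H}|L'|$ and $d_{L\cup L''}(w)<C_{\mathbb H}$. Now in the eigenvalue equation for an arbitrary $v\in L'\cup L''$ one simply over-estimates $\sum_{u\in N_{L'}(v)}x_u$ by $\sum_{u\in L'}x_u$, obtaining
\[
\rho^*x_v\;\le\;C_{\mathbb H}x_{u^*}+\sum_{u\in L'}x_u\;\le\;C_{\mathbb H}x_{u^*}+\frac{3\gamma_{\mathbb H}n}{100C_{\mathbb H}\rho^*}\,x_{u^*},
\]
and dividing by $\rho^*\ge\sqrt{\gamma_{\mathbb H}(n-\gamma_{\mathbb H})}$ gives the claim. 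The moral is that the averaged information $e(L')=O(|L'|)$, together with $|L'|=O(n/C_{\mathbb H}^2)$, is exactly strong enough, whereas pointwise degree control in $L'$ is neither available nor needed.
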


\begin{proof}
Choose an arbitrary $v\in L'\cup L''$.
Then $d_{L}(v)\leq|L|=\gamma_\mathbb{H}$, and
by Lemma \ref{lem3.3}, $d_{L''}(v)\leq\alpha_\mathbb{H}$.
Recall that
$\gamma_\mathbb{H}+\alpha_\mathbb{H}<C_\mathbb{H}$.
Thus, we have
\begin{eqnarray*}
d_{G^*}(v)=d_{L\cup L''}(v)+d_{L'}(v)
\leq(\gamma_\mathbb{H}+\alpha_\mathbb{H})+d_{L'}(v)<
C_\mathbb{H}+d_{L'}(v).
\end{eqnarray*}
Since $\rho^*x_v=\sum_{u\in N_{G^*}(v)}x_u\leq d_{G^*}(v)x_{u^*}$,
we obtain
\begin{eqnarray*}
\sum_{v\in L'}\rho^*x_v
\le\sum_{v\in L'}\big(C_\mathbb{H}+d_{L'}(v)\big)x_{u^*}
=\big(C_\mathbb{H}|L'|+2e(L')\big)x_{u^*},
\end{eqnarray*}
where $e(L')<C_\mathbb{H}|L'|$ by Lemma \ref{lem2.2}.
Combining (\ref{align.09}) gives
$\rho^*\sum_{v\in L'}x_v
\leq3C_\mathbb{H}|L'|x_{u^*}\leq\frac{3\gamma_\mathbb{H}n}{100C_\mathbb{H}}x_{u^*}.$
Again by $d_{L\cup L''}(v)\leq
\gamma_\mathbb{H}+\alpha_\mathbb{H}<C_\mathbb{H}$,
we have
\begin{eqnarray}\label{align.10}
\rho^*x_v=\sum_{u\in N_{G^*}(v)}x_u=\sum_{u\in N_{L\cup L''}(v)}x_u
+\sum_{u\in L'}x_u\le C_\mathbb{H}x_{u^*}+\frac{3\gamma_\mathbb{H}n}{100C_\mathbb{H}\rho^*}x_{u^*}.
\end{eqnarray}
Dividing both sides of (\ref{align.10}) by $\rho^*$
and combining ${\rho^*}^2\geq\gamma_\mathbb{H}(n-\gamma_\mathbb{H})$,
we obtain that $x_v\le\frac{4x_{u^*}}{100C_\mathbb{H}}$
for sufficiently large $n$,
as desired.
\end{proof}

In what follows, we are ready to prove a key lemma, which states that $L'$ is empty.
The proof proceeds by contradiction, employing the absorbing method.
For a finite family $\mathbb{H}$,
the absorbing method directly produces an $\mathbb{H}$-minor-free graph whose defining property is straightforward to check.
When $\mathbb{H}$ is infinite, however, the task becomes subtler:
graphs of sufficiently large order may contain some member of $\mathbb{H}$ as a minor, 
making it difficult to certify that the constructed graph remains $\mathbb{H}$-minor-free.
To enhance readability,
we break the proof of Lemma \ref{lem3.5} into a sequence of claims that
outline the argument and systematically apply the absorbing method,
as follows.
\begin{enumerate}[(i)]\setlength{\itemsep}{0pt}
\item Construct a graph $G'$ such that
$\rho(G')>\rho(G^*)$ and $G'[L,L'\cup L'']$ is a complete bipartite graph.
Then $G'$ admits an $H'$ minor for some $H'\in\mathbb{H}$,
and thus admits an $H'$-partition $\mathcal{V}$.
Using the $H'$-partition $\mathcal{V}$,
we find a maximal linear path $P^*$ of order at least
$\frac{\sqrt{n}}{2|H'|}$.

\item Based on $G'$,
we construct a graph $G''$ by absorbing each vertex of $L'$
into $P^*$; specifically, we replace $P^*$ and $G^*[L']$ with a new maximal linear path $P$ of order $|P^*|+|L'|$.
We then show that $\rho(G'')>\rho(G^*)$, implying that
$G''$ contains an $H''$ minor for some $H''\in\mathbb{H}$, and hence,
it admits an $H''$-partition $\mathcal{V}''$.

\item
Using $\mathcal{V}''$, we construct a graph $G'''$,
by contracting $P$ in $G''$ into a new path
of order $r\leq2|H''|+1$, ensuring that $G'''$ still contains an $H''$ minor.

\item Contract $P^*$ in $G^*-L'$ into a new path
of order $r$ to obtain a graph isomorphic to $G'''$.
Consequently, $G^*$ contains a $G'''$ minor and thus an $H''$ minor,
a contradiction.
\end{enumerate}

\begin{lem}\label{lem3.5}
$L'$ is an empty set.
\end{lem}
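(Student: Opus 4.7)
The plan is to assume for contradiction that $L'\neq\varnothing$ and follow the five-step absorbing strategy outlined in the paper. The first step is to construct a graph $G'$ with $G'-L' = G^*-L'$ and $\rho(G') > \rho(G^*)$. The natural construction is to modify the edges incident to $L'$: delete the edges inside $L'$ and from $L'$ to $L'\cup L''$ that are present in $G^*$, and redistribute them as new edges joining each $v\in L'$ to the vertices of $L$ (which are already nearly dominating by Proposition 3.0). Since every $u\in L$ has $x_u \ge (1-\tfrac{1}{2(10C_\mathbb{H})^2})x_{u^*}$ while, by Lemma 3.4, every $v \in L'\cup L''$ has $x_v \le \tfrac{4x_{u^*}}{100C_\mathbb{H}}$, the Rayleigh quotient calculation $X^{*T}A(G')X^*-X^{*T}A(G^*)X^*$ is forced to be positive, giving $\rho(G') > \rho(G^*)$. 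By extremality of $G^*$, $G'$ contains an $H'$-minor for some $H'\in\mathbb{H}$, and we fix a minimal $H'$-partition $\mathcal{V}=(V_{v_1},\ldots,V_{v_{|H'|}})$ of $G'$.

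Next, I would locate inside $G'[L'']$ a \emph{long} maximal linear path $P^*$, say of length at least $2(|H'|+|L'|)+1$, avoiding the vertices that serve as branch points in $\mathcal{V}$. Existence follows from Proposition 3.2 together with the degree bound $d_{L''}(v)\le\alpha_\mathbb{H}$ and the minor-freeness statement in Lemma 3.3: the subgraph $G'[L'']$ has bounded maximum degree, $n-\gamma_\mathbb{H}-|L'|$ vertices, and is $(K_{1,\alpha_\mathbb{H}+1}\cup\gamma_\mathbb{H}K_1)$-minor-free, so its edge-decomposition into maximal linear paths must contain a path of length going to infinity with $n$. Then construct $G''$ by \emph{absorbing} $L'$ into $P^*$: delete all edges incident to $L'$ and re-insert $L'$ as internal degree-two vertices along $P^*$, producing a longer maximal linear path $P$ of order $|P^*|+|L'|$ entirely inside $L'\cup L''$, while also joining each $v\in L'$ to all of $L$. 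A second Rayleigh quotient comparison, again exploiting the Proposition 3.0 / Lemma 3.4 gap, shows $\rho(G'')\ge\rho(G') > \rho(G^*)$.

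By extremality, $G''$ contains some $H''$-minor with $H''\in\mathbb{H}$; fix a minimal $H''$-partition $\mathcal{V}''$. Since $P$ has order much larger than $2|H''|+1$, its intersection pattern with the parts of $\mathcal{V}''$ hits at most $|H''|$ classes; hence we may contract $P$ to a short path of order $r\le 2|H''|+1$ while keeping the minor model intact. Call the resulting graph $G'''$; it still contains an $H''$-minor. The final observation is that contracting the corresponding initial segment of length $r$ of $P^*$ inside $G^*-L'$ produces a graph isomorphic to $G'''$, because the absorption of $L'$ and the contraction both occur entirely within $L'\cup L''$, and $G^*-L'=G'-L'$ by construction of $G'$. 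Consequently $G^*$ itself admits an $H''$-minor through the sequence \emph{delete $L'$, then contract $P^*$, then pass to the $H''$-minor of $G'''$}, contradicting $\mathbb{H}$-minor-freeness of $G^*$.

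The principal technical obstacle is the pair of Rayleigh quotient comparisons in steps one and three, where the edge gain $\Delta_1$ (joining $L'$ to $L$) must strictly exceed the loss from removing the existing edges at $L'$, and similarly for the absorbing rewiring. The arithmetic is driven by the large gap between $x_u$ (essentially $x_{u^*}$) for $u\in L$ and $x_v$ (at most $\tfrac{4x_{u^*}}{100C_\mathbb{H}}$) for $v\in L'\cup L''$, together with the edge-count bound $e(L')<C_\mathbb{H}|L'|$ from Lemma 2.2 applied to $G^*[L']$. A secondary subtlety is verifying that the intermediate graphs $G'$ and $G''$ are themselves well-defined for the spectral comparison (rather than already containing an $\mathbb{H}$-minor for trivial reasons), so that it is genuinely the spectral radius excess that forces the appearance of $H'$ and $H''$ minors.
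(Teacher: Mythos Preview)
Your outline matches the paper's five-step absorbing strategy, but there is a genuine gap in step~(ii), the existence of a long maximal linear path $P^*$ inside $G'[L'']$. You argue that bounded maximum degree together with $(K_{1,\alpha_\mathbb{H}+1}\cup\gamma_\mathbb{H}K_1)$-minor-freeness forces the edge-decomposition into maximal linear paths to contain a long one. That implication is false: nothing you have written rules out $G'[L'']$ being a disjoint union of $\Theta(n)$ copies of $K_{\alpha_\mathbb{H}}$ (or of any other fixed small graph), which satisfies both hypotheses yet has every maximal linear path of bounded length. The degree bound and the minor-freeness from Lemma~3.3 constrain each component but say nothing about the number of components.

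The paper closes this gap by exploiting the $H'$-partition $\mathcal{V}$ itself, not just its existence. Choosing $\mathcal{V}$ minimal and with $|L'\cap(\cup_i V_i)|$ minimal, one proves (Claim~3.4) that $L''\subseteq\cup_i V_i$; since each $V_i$ can meet at most one component of $G'[L'']$ (Claim~3.3), there are at most $|H'|$ components, so the largest one $G_1$ has order $\ge n/(2|H'|)$. Then, removing $\gamma_{H^*}$ vertices from $G_1$ so that the remainder is $K_{1,\alpha_\mathbb{H}+1}$-minor-free and connected, Lemma~3.1 yields $e(G_1)\le|G_1|+O(\sqrt{n})$. A further use of the minimality of $\mathcal{V}$ bounds the number of degree-one vertices of $G_1$ by $|H'|$ (Claim~3.6). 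These two facts together force all but $O(\sqrt{n})$ vertices of $G_1$ to have degree exactly two, and a counting argument then produces a maximal linear path of order $\ge\sqrt{n}/(2|H'|)$. None of this follows from bounded degree alone.

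A smaller point: in step~(iii) you propose to show $\rho(G'')\ge\rho(G')$ via the same ``Proposition~3.0 / Lemma~3.4 gap'' used for $G'$ versus $G^*$. That gap compares $L$-coordinates to $L'\cup L''$-coordinates of $X^*$, but the modification $G'\to G''$ (replacing one edge $w_1w_2$ in $L''$ by a path through $L'$) trades edges entirely within $L'\cup L''$, so that gap is not the relevant one. The paper instead uses the Perron eigenvector $Y$ of $G'$ and observes that in $G'$ every $v\in L'$ has $N_{G'}(v)=L$, whence $y_v=\sigma_L/\rho'$, while $y_w\le\sigma_L/(\rho'-\alpha_\mathbb{H})\le 2y_v$ for $w\in L''$; this is what makes the Rayleigh comparison go through.
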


\begin{proof}
Suppose to the contrary that $L'\neq\varnothing$.
Let $G'$ be the graph obtained from
$G^*$ by deleting all edges incident to vertices in $L'$
and adding all edges between $L'$ and $L$.

\begin{claim}\label{cl3.1}
Let $\rho':=\rho(G').$ Then $\rho'>\rho^*$.
\end{claim}

\begin{proof}
Since $e(L')\leq C_\mathbb{H}|L'|$,
there exists $v_1\in L'$ with
$d_{L'}(v_1)\leq\frac{2e(L')}{|L'|}\leq2C_\mathbb{H}$.
Set $L'_1:=L'$ and $L'_2:=L_1'\setminus\{v_1\}$.
Then we also have $e(L'_2)\leq C_\mathbb{H}|L'_2|$,
and thus there exists $v_2\in L'_2$ with
$d_{L'_2}(v_2)\leq2C_\mathbb{H}$.
Repeating this step, we obtain a sequence
$L'_1,\ldots,L'_{|L'|}$ such that
$L'_{i+1}=L'_{i}\setminus\{v_i\}$
and $d_{L'_i}(v_i)\leq 2C_\mathbb{H}$ for each $i$.
Now, we can decompose $E(G^*[L'])$ into
$|L'|$ subsets $\{v_iv:~ v\in N_{L'_i}(v_i)\}$,
where $i=1,\ldots,|L'|$.
Therefore,
\begin{eqnarray}\label{align.11}
\rho'-\rho^*\geq {X^*}^T\big(A(G')-A(G^*)\big)X^*
\geq\sum_{i=1}^{|L'|}2x_{v_i}\Big(\sum_{u\in L}x_u
-\!\!\!\sum_{v\in N_{L\cup L''\cup L'_i}(v_i)}\!\!\!x_v\Big).
\end{eqnarray}
Recall that $\gamma_\mathbb{H}+\alpha_\mathbb{H}<C_\mathbb{H}$.
Moreover, by Proposition \ref{pro3.0},
we have $|L|=\gamma_\mathbb{H}$ and
\begin{eqnarray}\label{align.12}
\sum_{u\in L}x_u\geq
\gamma_\mathbb{H}\big(1-\frac{1}{2(10C_\mathbb{H})^2}\big)x_{u^*}\geq
\big(\gamma_\mathbb{H}-\frac{1}{10}\big)x_{u^*}.
\end{eqnarray}

On the other hand, for each $v_i\in L'$,
we have $d_{L}(v_i)\leq|L|-1=\gamma_{\mathbb{H}}-1$ by the definition of $L'$,
$d_{L''}(v_i)\leq\alpha_\mathbb{H}$ by Lemma \ref{lem3.3},
and $d_{L'_i}(v_i)\leq 2C_\mathbb{H}$ by the choice of $v_i$.
Moreover, by Lemma \ref{lem3.4}, we have
$x_v\le\frac{4x_{u^*}}{100C_\mathbb{H}}$ for $v\in L'\cup L''$.
Thus,
$$
\sum_{v\in N_{L\cup L''\cup L'_i}(v_i)}\!\!\!x_v
\leq d_{L}(v_i)x_{u^*}+d_{L''\cup L'_i}(v_i)
\frac{4x_{u^*}}{100C_\mathbb{H}}
\leq(\gamma_\mathbb{H}-1)x_{u^*}+(\alpha_\mathbb{H}+2C_\mathbb{H})\frac{4 x_{u^*}}{100C_\mathbb{H}},
$$
which implies that
$\sum_{v\in N_{L\cup L''\cup L'_i}(v_i)}x_v<\big(\gamma_\mathbb{H}-\frac{1}{10}\big)x_{u^*}$,
as $\alpha_\mathbb{H}+2C_\mathbb{H}<3C_\mathbb{H}$.
Combining this with (\ref{align.11}) and (\ref{align.12}),
we obtain $\rho'\geq\rho^*$,
with equality if and only if $X^*$ is an eigenvector
corresponding to $\rho(G')$
and $x_{v_i}=0$ for each $v_i\in L'$.
Observe that $G'$ is connected.
If $X^*$ is an eigenvector corresponding to $\rho(G')$,
then by the Perron-Frobenius theorem, $X^*$ is positive.
Hence, $\rho'>\rho^*$, as desired.
\end{proof}

In view of Claim \ref{cl3.1} and
the choice of $G^*$,
$G'$ must contain an $H'$ minor for some $H'\in\mathbb{H}$.
Let $\mathcal{V}=(V_1,\dots,V_{|H'|})$ be a minimal $H'$-partition of $G'$.
A set $V_i$ ($i\in\{1,\ldots,|H'|\}$) is called a \emph{good set}
if both $V_i\cap L$ and $V_i\setminus L$ are non-empty.
Since $|L|=\gamma_\mathbb{H}$ and $V_1,\dots,V_{|H'|}$ are vertex-disjoint,
there are at most $\gamma_\mathbb{H}$ good sets in $\mathcal{V}$.
We now present a precise characterization of good sets.

\begin{claim}\label{cl3.2}
Every good set has exactly two vertices.
\end{claim}

\begin{proof}
The definition implies that every good set has at least two vertices.
Suppose, for contradiction, that there exists a good set $V_i$ with $|V_i|\geq3$.
Choose $u\in V_i\cap L$ and $v\in V_i\setminus L$.
Note that $G'[L,L'\cup L'']\cong K_{|L'|,|L'\cup L''|}$.
Thus $L'\cup L''\subseteq N_{G'}(u)$ and $L\subseteq N_{G'}(v)$.
If we contract the edge $uv$ in $G'$ into a new vertex $w$,
then $w$ is a dominating vertex in the resulting graph.
Let $V_i'=\{u,v\}$ and $\mathcal{V}'=(V_1,\dots,V_i',\dots,V_{|H'|})$.
Then, $\mathcal{V}'$ is also an $H'$-partition of $G'$,
contradicting the minimality of $\mathcal{V}$.
Therefore, the claim holds.
\end{proof}

\begin{claim}\label{cl3.3}
For $i\in\{1,\ldots,|H'|\}$,
every induced subgraph $G'[V_i\cap(L'\cup L'')]$ is connected
provided that $V_i\cap(L'\cup L'')\neq\varnothing$.
\end{claim}

\begin{proof}
Suppose to the contrary that
$G'[V_i\cap(L'\cup L'')]$ is not connected for some $i$.
Then $|V_i\cap(L'\cup L'')|\geq2$.
However, $G'[V_i]$ is connected
by the definition of model.
Thus $V_i\cap L\neq\varnothing$.
This implies that $V_i$ is a good set and $|V_i|\geq3$,
contradicting Claim \ref{cl3.2}.
\end{proof}

In what follows, we assume that $\mathcal{V}=(V_1,\dots,V_{|H'|})$
is a minimal $H'$-partition of $G'$, and subject to this,
$|L'\cap(\cup_{i=1}^{|H'|}V_i)|$ is minimized.
Let $G'[L'']$ have $c$ connected components, $G_1,\dots,G_c$.

\begin{claim}\label{cl3.4}
$L''\subseteq\cup_{i=1}^{|H'|}V_i$ and $c\leq|H'|$.
\end{claim}

\begin{proof}
Note that $G^*[L\cup L'']=G'[L\cup L'']$.
If $\cup_{i=1}^{|H'|}V_i\subseteq L\cup L''$,
then $\mathcal{V}$ is an $H'$-partition of $G^*$,
contradicting the fact that $G^*$ is $H'$-minor-free.
Hence, $L'\cap(\cup_{i=1}^{|H'|}V_i)\neq\varnothing$.

Suppose that
there exists $u\in L''\setminus(\cup_{i=1}^{|H'|}V_i)$.
Choose a vertex $v\in L'\cap(\cup_{i=1}^{|H'|}V_i)$.
We may assume without loss of generality that $v\in L'\cap V_1$.
Set $V_1':=(V_1\setminus\{v\})\cup\{u\}$
and $\mathcal{V}':=(V_1',V_2,\dots,V_{|H'|})$.
Note that $N_{G'}(v)=L\subseteq N_{G'}(u)$.
Then, $\mathcal{V}'$ is also an $H'$-partition of $G'$,
contradicting the minimality of $|L'\cap(\cup_{i=1}^{|H'|}V_i)|$.
Thus, $L''\subseteq\cup_{i=1}^{|H'|}V_i$.

In what follows, we prove that $c\leq |H'|$.
On the one hand,
$\cup_{j=1}^{c}V(G_j)=L''\subseteq\cup_{i=1}^{|H'|}V_i.$
On the other hand, by Claim \ref{cl3.3},
we can see that for each $V_i$ (where $1\leq i\leq |H'|$),
there exists at most one $G_j$ with $V(G_j)\cap V_i\neq\varnothing$.
Therefore, we have $c\leq |H'|$, as desired.
\end{proof}

\begin{claim}\label{cl3.5}
Let $|G_1|=\max_{1\leq j\leq c}|G_j|$.
Then, $e(G_1)\leq|G_1|+\frac16\sqrt{n}.$
\end{claim}

\begin{proof}
It follows from (\ref{align.09}) that $|L''|=n-|L|-|L'|\geq\frac n2$.
Combining Claim \ref{cl3.4},
we obtain
\begin{eqnarray}\label{align.13}
|G_1|\geq\frac1c|L''|\geq\frac1{|H'|}|L''|\geq\frac{n}{2|H'|}.
\end{eqnarray}
This implies that $|G_1|\geq\gamma_{H^*}+\alpha_{H^*}+3$,
where $H^*$ is minimal with respect to $\mathbb{H}$.
Taking a spanning tree of $G_1$.
By successively deleting leaves,
we obtain a $\gamma_{H^*}$-subset $S\subseteq V(G_1)$ such that $G_1-S$ remains connected.
Observe that $G^*[L'']=G'[L'']$. Set $t:=\alpha_{H^*}+1.$
By Lemma \ref{lem3.3},
$G_1$ is $(K_{1,t}\cup\gamma_{H^*}K_1)$-minor-free.
Thus, $G_1-S$ is $K_{1,t}$-minor-free.
Since $G_1-S$ is a connected graph of order at least $t+2$,
it must contain a $K_{1,2}$,
implying $t\geq3$.
By Lemma \ref{lem3.1}, we obtain
$e(G_1-S)\leq\binom{t}{2}+|G_1-S|-t$.
Moreover, Lemma \ref{lem3.3} also gives $d_{L''}(v)\leq\alpha_{H^*}$
for each $v\in L''$.
Hence,
$$e(G_1)\leq e(G_1-S)+\sum_{v\in S}d_{L''}(v)
\leq\frac12t(t-1)+|G_1|-|S|-t+|S|\alpha_{H^*}.$$
It follows that $e(G_1)\leq|G_1|+\frac16\sqrt{n}$ for sufficiently large $n$.
\end{proof}

\begin{claim}\label{cl3.6}
Let $U_1$ be the set of vertices of degree one in $G_1$.
Then, $|U_1|\leq |H'|$.
\end{claim}

\begin{proof}
We first show that
$|V_i\cap U_1|\leq1$ for each $i\in\{1,\ldots,|H'|\}$.
Suppose, to the contrary, that $|V_i\cap U_1|\geq2$ for some $i$.
Note that $V_i\cap U_1\subseteq V_i\setminus L$.
Thus, $|V_i\setminus L|\geq2$.

Now, if $V_i\cap L\neq\varnothing$, then $V_i$ is a good set,
and by Claim \ref{cl3.2}, we have $|V_i|=2$, a contradiction.
Hence, $V_i\cap L=\varnothing$, meaning that
$V_i\subseteq L'\cup L''$.

Since $G'[V_i]$ is connected
and $|V_i|\geq|V_i\cap U_1|\geq2$,
there exist $u_0\in V_i\cap U_1$ and $w_0\in N_{V_i}(u_0)$.
Recall that $L\subseteq N_{G'}(v)$ for each $v\in L'\cup L''$.
Consequently, $L\subseteq N_{G'}(u_0)\cap N_{G'}(w_0)$,
as $u_0,w_0\in V_i\subseteq L'\cup L''$.
Since $u_0\in U_1$ and $G_1$ is a component of $G'[L'\cup L'']$,
we know that $w_0$ is the unique neighbor of $u_0$ in $L'\cup L''$,
and thus
$N_{G'}(u_0)\setminus\{w_0\}\subseteq N_{G'}(w_0)\setminus\{u_0\}.$
Hence, $(V_1,\dots,V_i\setminus\{u_0\},\dots,V_{|H'|})$
is also an $H'$-partition of $G'$,
contradicting the minimality of $\mathcal{V}$.
Hence, $|V_i\cap U_1|\leq1$ for each $i$.

Note that $U_1\subseteq V(G_1)\subseteq L''$.
Moreover, by Claim \ref{cl3.4}, we have $L''\subseteq\cup_{i=1}^{|H'|}V_i$.
Thus, $U_1\subseteq\cup_{i=1}^{|H'|}V_i$.
Since $|V_i\cap U_1|\leq1$ for each $i\in\{1,\ldots,|H'|\}$,
we obtain $|U_1|\leq |H'|$.
\end{proof}

\begin{claim}\label{cl3.7}
$G_1$ admits a maximal linear path $P^*$ of order at least
$\frac{\sqrt{n}}{2|H'|}$.
\end{claim}

\begin{proof}
Let $U_2=\{v\in V(G_1): d_{G_1}(v)=2\}$ and $U_3=V(G_1)\setminus(U_1\cup U_2).$
Then,
$3|G_1|-2|U_1|-|U_2|=|U_1|+2|U_2|+3|U_3|
\leq2e(G_1),$
which yields that $e(G_1)-|U_2|\leq 3\big(e(G_1)-|G_1|\big)+2|U_1|$.
Combining Claims \ref{cl3.5} and \ref{cl3.6} gives that
\begin{eqnarray}\label{align.14}
e(G_1)-|U_2|\leq
\frac12\sqrt{n}+2|H'|\leq\sqrt{n}.
\end{eqnarray}

Assume now that there are $\phi(G_1)$ maximal linear paths in $G_1$.
If $G_1$ itself is a cycle,
then by Definition \ref{de3.1}, $\phi(G_1)=1$.
Otherwise,
by Proposition \ref{pro3.1},
the two ends of every maximal linear path contributes exactly two to the sum
$\sum_{v\in V(G_1)\setminus U_2}d_{G_1}(v)$.
Thus, $2\phi(G_1)=\sum_{v\in V(G_1)\setminus U_2}d_{G_1}(v).$
Combining inequality (\ref{align.14}) gives
$$\phi(G_1)=\frac{1}{2}\sum_{v\in V(G_1)\setminus U_2}\!\!\!d_{G_1}(v)
=e(G_1)-|U_2|\leq\sqrt{n}.$$
By Proposition \ref{pro3.2},
$G_1$ admits an edge-decomposition of its maximal linear paths.
Furthermore, by inequality (\ref{align.13}) and the pigeonhole principle, there exists a maximal linear path $P^*$ such that
$|P^*|\geq\frac{e(G_1)}{\phi(G_1)}+1
\geq\frac{|G_1|}{\phi(G_1)}\geq\frac{\sqrt{n}}{2|H'|},$
as desired.
\end{proof}

Observe that $V(P^*)\subseteq L''$ and $L'$ is an independent set in $G'$.
We now absorb every vertex in $L'$ into $P^*$.
Write $P^*=w_1w_2\ldots w_a$ and $L'=\{v_1,v_2,\ldots,v_b\}$.
Let $G''$ be the graph obtained from $G'$ by replacing the edge $w_1w_2$
with the path $w_1v_1v_2\ldots v_bw_2$.

\begin{claim}\label{cl3.8}
Let $\rho'':=\rho(G'')$. Then, $\rho''>\rho^*$.
\end{claim}

\begin{proof}
By Claim \ref{cl3.1}, $\rho(G')=\rho'>\rho^*$.
Then it suffices to show $\rho''\geq\rho'.$
Since $G'$ is connected,
by the Perron-Frobenius theorem, there exists a positive unit eigenvector
$Y=(y_1,\ldots,y_n)^T$ corresponding to $\rho(G')$.
Set $\sigma_L:=\sum_{u\in L}y_u$
and $y_{L''}^*:=\max_{w\in L''}y_w$.
For every $v_i\in L'$, since $N_{G'}(v_i)=L$,
we have $\rho'y_{v_i}=\sigma_L$.
Consequently, $y_{v_1}=y_{v_b}=\frac{\sigma_L}{\rho'}.$
Moreover, by Lemma \ref{lem3.3},
$d_{L''}(w)\leq\alpha_{\mathbb{H}}$ for each $w\in L''$.
Thus, $\rho'y_{L''}^*\leq\sigma_L+\alpha_{\mathbb{H}}y_{L''}^*$,
which yields $y_{L''}^*\leq\frac{\sigma_L}{\rho'-\alpha_{\mathbb{H}}}.$
By Lemma \ref{lem2.4},
$\rho^*\geq\sqrt{\gamma_{\mathbb{H}}(n-\gamma_{\mathbb{H}})}$,
and thus $\rho'>\sqrt{\gamma_{\mathbb{H}}(n-\gamma_{\mathbb{H}})}\geq2\alpha_{\mathbb{H}}$.
Combining these inequalities yields
\begin{eqnarray}\label{align.15}
\max\{y_{w_1},y_{w_2}\}\leq y_{L''}^*\leq\frac{\sigma_L}{\rho'-\alpha_{\mathbb{H}}}
\leq\frac{2\sigma_L}{\rho'}=2y_{v_1}=2y_{v_b}.
\end{eqnarray}
On the other hand, one can see that
$$\rho''-\rho'
\geq 2(y_{w_1}y_{v_1}+y_{w_2}y_{v_b}-y_{w_1}y_{w_2})
=y_{w_1}(2y_{v_1}-y_{w_2})+y_{w_2}(2y_{v_b}-y_{w_1}).$$
Combining (\ref{align.15}), we have $\rho''\geq\rho',$
and so $\rho''>\rho^*$.
\end{proof}

We are now ready to complete the proof of Lemma \ref{lem3.5}.
Note that $G''[L'\cup L'']$ has a maximal linear path
$P=w_1v_1v_2\ldots v_bw_2\ldots w_{a-1}w_a$.
In view of
Claim \ref{cl3.8} and the definition of $G^*$,
$G''$ contains an $H''$ minor for some $H''\in\mathbb{H}$.
Let $\mathcal{V''}=(V_1,\dots,V_{|H''|})$ be a minimal $H''$-partition of $G''$.
Applying Claim \ref{cl3.3} to $G''$ and $\mathcal{V''}$,
we know that for every $i\in\{1,\ldots,|H''|\}$,
either $V_i\cap(L'\cup L'')=\varnothing$ or
$G''[V_i\cap(L'\cup L'')]$ is connected.
This implies that exactly one of the following holds:
\begin{itemize}\setlength{\itemsep}{0pt}
\item $V_i\cap V(P)=\varnothing$;
\item $G''[V_i\cap V(P)]$ is a subpath of $P$; 
\item $G''[V_i\cap V(P)]$ consists of two subpaths $P',P''$ such that
$w_1\in V(P')$ and $w_a\in V(P'')$.
\end{itemize}
Now, let $P(i)=G''[V_i\cap V(P)]$, $i=1,\ldots,|H''|.$
Then $P(1),\ldots,P(|H''|)$ partition $P$ into $r$ subpaths,
where $r\leq 2|H''|+1$.
Let $G'''$ denote the graph obtained from $G''$
by contracting each of these $r$ subpaths into a new vertex.
Then by the definition of a model, $G'''$ still contains an $H''$ minor.

On the other hand, recall that $G^*[L'']=G'[L'']$ and that
$P^*\subseteq G'[L'']$.
Hence, $P^*$ is also a maximal linear path in $G^*[L'']$.
By Claim \ref{cl3.7}, $|P^*|\geq\frac{\sqrt{n}}{2|H'|}\geq2|H''|+1$.
Contracting the path $P^*$ in $G^*-L'$ into a new path of order $r$ yields a
graph isomorphic to $G'''$.
Hence, $G^*$ contains a $G'''$ minor and thus an $H''$ minor,
a contradiction.
Therefore, $L'=\varnothing$,
completing the proof.
\end{proof}

In the following, we complete the proof of Theorem \ref{thm1.1}.

\begin{proof}
We begin by proving that $G^*$ contains a spanning subgraph $B_{\gamma_\mathbb{H},n-\gamma_\mathbb{H}}$.
By Proposition \ref{pro3.0}, $|L|=\gamma_\mathbb{H}$,
and by Lemma \ref{lem3.5}, we have $V(G^*)=L\cup L''$.
It is then sufficient to show that $G^*[L]\cong K_{\gamma_\mathbb{H}}$.
Suppose, for the sake of contradiction, that there exist two non-adjacent vertices $u_1,u_2\in L$.
Since $|L''|=n-|L|>\max_{H\in\mathbb{H}}(|H|+1)$,
we can choose a subset $L'''\subset L''$
with $|L'''|=\max_{H\in\mathbb{H}}(|H|+1)$.
Let $L'''':=L''\setminus L'''$.
By Lemma \ref{lem3.3}, $d_{L''}(w)\leq\alpha_{\mathbb{H}}$ for each $w\in L''$.
It follows that
$$e(L'')-e(L'''')\leq\sum_{v\in L'''}d_{L''}(v)\leq
\max_{H\in\mathbb{H}}(|H|+1)\alpha_{\mathbb{H}}
\leq\sqrt{n}.$$
Next, let $x_{L''}^*:=\max_{w\in L''}x_w$.
Then, $\rho^*x_{L''}^*\leq\sum_{u\in L}x_u+\alpha_{\mathbb{H}}x_{L''}^*,$
where $\rho^*\geq\sqrt{\gamma_\mathbb{H}(n-\gamma_\mathbb{H})}.$
Hence, $$x_{L''}^*\leq\frac{\sum_{u\in L}x_u}{\rho^*-\alpha_{\mathbb{H}}}
\leq\frac{\gamma_\mathbb{H}x_{u^*}}{\rho^*-\alpha_{\mathbb{H}}}
\leq\sqrt{\frac{2\gamma_\mathbb{H}}{n}}x_{u^*}.$$

Let $E=E(L'')\setminus E(L'''')$, and let $G$ be the graph obtained from
$G^*$ by deleting all edges in $E$ and adding an edge $u_1u_2$.
By Proposition \ref{pro3.0},
$\min\{x_{u_1},x_{u_2}\}\geq(1-\frac{1}{2(10C_\mathbb{H})^2})x_{u^*}$,
and thus
\begin{eqnarray*}
\rho(G)-\rho(G^*)
\geq 2x_{u_1}x_{u_2}-\!\!\!\sum_{w_1w_2\in E}\!\!\!2x_{w_1}x_{w_2}
\geq 2\Big(\big(1-\frac{1}{2(10C_\mathbb{H})^2}\big)^2
-\sqrt{n}\cdot\frac{2\gamma_\mathbb{H}}{n}\Big)x_{u^*}^2
> 0,
\end{eqnarray*}
which implies that $G$ contains an $H_0$ minor for some $H_0\in \mathbb{H}$.

Let $\mathcal{V}=(V_1,\dots,V_{|H_0|})$ be a minimal $H_0$-partition of $G$.
Since $N_G(v)=L$ for each vertex $v\in L'''$,
applying Claim \ref{cl3.3} to $G$ and $\mathcal{V}$ yields $|V_i\cap L'''|\leq1$ for $i=1,\ldots,|H_0|$.
Since $|L'''|=\max_{H\in\mathbb{H}}(|H|+1)\geq|H_0|+1$,
there exists a vertex
$v\in L'''\setminus\cup_{i=1}^{|H_0|}V_i$.
Consequently,
$\mathcal{V}=(V_1,\dots,V_{|H_0|})$ is also an $H_0$-partition of $G-\{v\}$,
implying that $G-\{v\}$ contains an $H_0$ minor.

Now, let $G^*_{u_1v}$ be the graph obtained from $G^*$ by contracting the edge $u_1v$
into a new vertex $\overline{u}_1$. Then $\overline{u}_1u_2\in E(G^*_{u_1v})$,
and thus $G-\{v\}$ is isomorphic to some subgraph of $G^*_{u_1v}$.
This implies that $G^*_{u_1v}$ also contains an $H_0$ minor.
Correspondingly, $G^*$ contains an $H_0$ minor, a contradiction.
Therefore, $G^*$ contains a spanning subgraph $B_{\gamma_\mathbb{H},n-\gamma_\mathbb{H}}$.

We now derive the lower bound for $\rho^*=spex(n,\mathbb{H}_{minor})$.
A straightforward calculation gives $\rho(B_{\gamma_\mathbb{H},n-\gamma_\mathbb{H}})
=\frac{\gamma_\mathbb{H}-1}{2}+\sqrt{\gamma_\mathbb{H} n-\frac{3\gamma_\mathbb{H}^2+2\gamma_\mathbb{H}-1}{4}}$.
It follows that
   $$\rho^*\geq  \rho(B_{\gamma_\mathbb{H},n-\gamma_\mathbb{H}})
\geq \sqrt{\gamma_\mathbb{H} n}+\frac{\gamma_\mathbb{H}-1}{2}+O\big(\frac{1}{\sqrt{n}}\big).$$

It remains to obtain the upper bound of $\rho^*$.
Recall that $\rho^*x_{L''}^*\leq\sum_{u\in L}x_u+\alpha_{\mathbb{H}}x_{L''}^*$ and $|L|=\gamma_\mathbb{H}$.
By symmetry, $x_u=x_{u^*}$ for each $u\in L$.
Therefore, $\rho^*x_{u^*}\leq(\gamma_\mathbb{H}-1)x_{u^*}+(n-\gamma_\mathbb{H})x_{L''}^*$
and $\rho^*x_{L''}^*\leq\gamma_\mathbb{H}x_{u^*}+\alpha_{\mathbb{H}}x_{L''}^*.$
Combining these two inequalities gives
 $(\rho^*\!-\!\gamma_\mathbb{H}\!+\!1)(\rho^*\!-\!\alpha_\mathbb{H})\leq(n\!-\!\gamma_\mathbb{H})\gamma_\mathbb{H}$.
It follows that $\rho^*
\leq \sqrt{\gamma_\mathbb{H} n}+\frac{\alpha_\mathbb{H}+\gamma_\mathbb{H}-1}{2}+O(\frac{1}{\sqrt{n}})$,
completing the proof of Theorem \ref{thm1.1}.
\end{proof}

Having established Theorem \ref{thm1.1},
we now proceed to characterize $G^*-L$.
Let us recall some notations and terminologies.
For a member $H\in\mathbb{H},$
$\Gamma_s^*(H)$ denotes the family of $s$-vertex
irreducible induced subgraphs of $H$, and
$\Gamma(\mathbb{H})=\bigcup_{H\in\mathbb{H}}\Gamma_{|H|-\gamma_\mathbb{H}}^*(H),$
where $\gamma_\mathbb{H}=\min_{H\in\mathbb{H}}\gamma_H.$

\begin{lem}\label{lem3.6}
Let $G$ be a graph with a set $L$ of $\gamma_\mathbb{H}$ dominating vertices.
Then, $G$ is $\mathbb{H}$-minor-free if and only if $G-L$
is $\Gamma(\mathbb{H})$-minor-free.
\end{lem}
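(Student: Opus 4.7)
The plan is to prove each direction by contrapositive. For the forward direction $(\Rightarrow)$, I would suppose that $G-L$ contains some $F\in\Gamma(\mathbb{H})$ as a minor, say $F=H[S]\in\Gamma^*_{|H|-\gamma_\mathbb{H}}(H)$ for some $H\in\mathbb{H}$ with $|S|=|H|-\gamma_\mathbb{H}$. Since $L$ consists of $\gamma_\mathbb{H}$ dominating vertices, $L$ is a clique completely joined to $V(G)\setminus L$, so attaching $L$ to the minor model of $F$ inside $G-L$ produces a minor model of $K_{\gamma_\mathbb{H}}\nabla F$ in $G$. Writing $T=V(H)\setminus S$, one has $|T|=\gamma_\mathbb{H}$, and the edges of $H$ split into those inside $T$, those inside $S$, and those between $T$ and $S$, all of which are present in $K_{|T|}\nabla H[S]$; hence $H$ is a spanning subgraph of $K_{\gamma_\mathbb{H}}\nabla F$, and $G$ contains an $H$-minor, a contradiction.

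For the backward direction $(\Leftarrow)$, suppose $G$ has an $H$-minor for some $H\in\mathbb{H}$, realized by a model $\{V_v:v\in V(H)\}$. Let $T_0=\{v\in V(H):V_v\cap L\neq\varnothing\}$; the disjointness of the bags together with $|L|=\gamma_\mathbb{H}$ forces $|T_0|\le\gamma_\mathbb{H}$, so $|V(H)\setminus T_0|\ge|H|-\gamma_\mathbb{H}$. Choose any $S\subseteq V(H)\setminus T_0$ with $|S|=|H|-\gamma_\mathbb{H}$; the bags $\{V_v:v\in S\}$ avoid $L$, and the branching edges required for pairs in $E(H[S])$ survive in $G-L$, giving a minor model of $H[S]\in\Gamma_{|H|-\gamma_\mathbb{H}}(H)$ in $G-L$.

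It remains to upgrade $H[S]$ to an \emph{irreducible} member of $\Gamma^*_{|H|-\gamma_\mathbb{H}}(H)\subseteq\Gamma(\mathbb{H})$. If $H[S]$ is not irreducible, then by definition some $H[S_1]\in\Gamma_{|H|-\gamma_\mathbb{H}}(H)$ is isomorphic to a proper subgraph of $H[S]$. Since any subgraph of a minor of $G-L$ is itself a minor of $G-L$, the graph $H[S_1]$ is a minor of $G-L$ as well. Iterating this reduction, and using that each step strictly decreases the edge count of the current member of $\Gamma_{|H|-\gamma_\mathbb{H}}(H)$, the process terminates at an irreducible $H[S^*]\in\Gamma^*_{|H|-\gamma_\mathbb{H}}(H)$ that is still a minor of $G-L$, establishing the contrapositive.

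The main delicate point is the last descent step: one must ensure that passing from an arbitrary $H[S]$-minor of $G-L$ to an irreducible one stays inside $\Gamma_{|H|-\gamma_\mathbb{H}}(H)$. This is exactly what the definition of irreducibility (minimality with respect to the proper-subgraph order among members of $\Gamma_{|H|-\gamma_\mathbb{H}}(H)$) is designed to accommodate, once combined with the standard fact that subgraphs of minors are minors; strict descent in edge count gives termination.
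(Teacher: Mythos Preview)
Your argument is correct and follows essentially the same contrapositive strategy as the paper. In the forward direction both proofs combine a model of $F=H[S]$ in $G-L$ with the $\gamma_\mathbb{H}$ dominating vertices to produce an $H$-minor in $G$; in the backward direction the paper instead selects a minimal $H$-partition with $|L\cap(\cup_i V_i)|$ maximal to force each vertex of $L$ into its own singleton bag, whereas your disjointness count ($|T_0|\le|L|=\gamma_\mathbb{H}$) reaches the same conclusion more directly. Both then pass from $H[S]\in\Gamma_{|H|-\gamma_\mathbb{H}}(H)$ to an irreducible member by the obvious descent, so the difference is purely one of packaging.
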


\begin{proof}
Firstly, assume that $G-L$ contains an $H_0$ minor for some
$H_0\in\Gamma(\mathbb{H}).$
Then, there exists an $H\in\mathbb{H}$ such that $H_0$ is
an $(|H|-\gamma_\mathbb{H})$-vertex induced subgraph of $H$.
Combining this $H_0$ minor with $\gamma_\mathbb{H}$ dominating vertices in $L$,
we obtain an $H$ minor in $G$.

Conversely, assume that $G$ contains an $H$ minor for some
$H\in\mathbb{H}.$
By Definition \ref{de3.2},
$G$ has an $H$-partition $\mathcal{V}=(V_1,\dots,V_{|H|})$.
We may assume that $\mathcal{V}$ is a minimal $H$-partition,
and subject to this minimality, $|L\cap(\cup_{i=1}^{|H|}V_i)|$ is maximized.
Since $L$ is a set of $\gamma_\mathbb{H}$ dominating vertices, 
there exist exactly $\gamma_\mathbb{H}$ members of $\mathcal{V}$,
say $V_1,\ldots,V_{\gamma_\mathbb{H}}$,
such that $|V_i|=|L\cap V_i|=1$ for $i\in\{1,\ldots,\gamma_\mathbb{H}\}$.
Consequently, $\cup_{i=\gamma_\mathbb{H}+1}^{|H|}V_i\subseteq V(G)\setminus L,$
which implies that $G-L$ contains an $H_0$ minor for some $H_0\in \Gamma_{|H|-\gamma_\mathbb{H}}(H)$.
Therefore, $G-L$ also contains an $H_0^*$ minor for some $H_0^*\in \Gamma^*_{|H|-\gamma_\mathbb{H}}(H)$.
\end{proof}

In the following, we provide the proof of Theorem \ref{thm1.2}.

\begin{proof}
By Theorem \ref{thm1.1}, $G^*$ has $\gamma_\mathbb{H}$ dominating vertices.
Thus, $G^*$ is connected, and adding an arbitrary edge within its independent set strictly increases
the spectral radius.
Furthermore, by Lemma \ref{lem3.6},
$G^*-L$ is $\Gamma(\mathbb{H})$-minor-free.
Therefore, $G^*-L$ is $\Gamma(\mathbb{H})$-minor-saturated.

In Particular, if $\mathbb{H}=\{H\}$,
then $|H|-\gamma_H=\alpha_H+1.$
Therefore, $\Gamma(\mathbb{H})=\Gamma_{\alpha_{H}+1}^*(H)$,
and hence, $G^*-L$ is $\Gamma_{\alpha_{H}+1}^*(H)$-minor-saturated.
This completes the proof of Theorem \ref{thm1.2}.
\end{proof}

Set $L'':=V(G^*)\setminus L$.
A subset $R$ of $L''$ is called a \emph{component subset},
if $G^*[R]$ consists of one or more connected components of $G^*[L'']$.
Such an $R$ is further said to be \emph{small}, if $|R|\leq C$ for some constant $C$.
To prove Theorem \ref{thm1.3}, we will need four additional lemmas. 
The proofs involve detailed constructions, combinatorial analysis, 
and the application of spectral methods.

\begin{lem}\label{lem3.7}
If $R$ is a small component subset of $L''$,
then $e(R)=ex\big(|R|,\Gamma(\mathbb{H})_{minor}\big)$.
\end{lem}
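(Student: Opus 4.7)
The plan is to argue by contradiction. Since $G^*-L$ is $\Gamma(\mathbb{H})$-minor free by Theorem~\ref{thm1.2}, its induced subgraph $G^*[R]$ is too, giving the trivial bound $e(R)\le ex(|R|,\Gamma(\mathbb{H})_{minor})$. Suppose toward contradiction that $e(R)<ex(|R|,\Gamma(\mathbb{H})_{minor})$, and fix any $\Gamma(\mathbb{H})$-minor free graph $F$ on the vertex set $R$ with $e(F)=ex(|R|,\Gamma(\mathbb{H})_{minor})$. I would then build $G'$ from $G^*$ by deleting $E(G^*[R])$ and adding $E(F)$, while keeping every edge incident to $L$ and every edge of $G^*[(V(G^*)\setminus L)\setminus R]$ intact. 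Because $R$ is a component subset, no edges of $G^*$ cross between $R$ and $(V(G^*)\setminus L)\setminus R$, so the swap alters only the induced subgraph on $R$.

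The first thing to check is that $G'$ is still $\mathbb{H}$-minor free. By construction $L$ remains a set of $\gamma_{\mathbb{H}}$ dominating vertices of $G'$, and $G'-L$ is the vertex-disjoint union of $F$ and $G^*[(V(G^*)\setminus L)\setminus R]$. Since every member of $\Gamma(\mathbb{H})$ is connected by the standing hypothesis of this section, any $\Gamma(\mathbb{H})$-minor in $G'-L$ must live entirely inside one of these two pieces; the first is $\Gamma(\mathbb{H})$-minor free by the choice of $F$, and the second is a subgraph of the $\Gamma(\mathbb{H})$-minor free graph $G^*-L$. Therefore $G'-L$ is $\Gamma(\mathbb{H})$-minor free, and Lemma~\ref{lem3.6} yields that $G'$ is $\mathbb{H}$-minor free.

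I would then produce the contradiction by proving $\rho(G')>\rho(G^*)$. Writing $y_v:=x_v^*$ for $v\in R$ and $\sigma_L:=\sum_{u\in L}x_u^*$, the Rayleigh-quotient inequality applied to the unit eigenvector $X^*$ of $G^*$ gives
\[
\rho(G')-\rho(G^*)\ge 2\Bigl(\sum_{uv\in E(F)} y_uy_v\;-\sum_{uv\in E(G^*[R])} y_uy_v\Bigr).
\]
Since $R\subseteq L''$ (using $L'=\varnothing$ from Lemma~\ref{lem3.5}), Lemma~\ref{lem3.3} gives $d_{L''}(v)\le\alpha_{\mathbb{H}}$ for every $v\in R$, so the eigenequation for $G^*$ together with Lemma~\ref{lem2.4} implies
\[
\frac{\sigma_L}{\rho^*}\;\le\; y_v\;\le\;\frac{\sigma_L}{\rho^*-\alpha_{\mathbb{H}}},\qquad \rho^*\ge\sqrt{\gamma_{\mathbb{H}}(n-\gamma_{\mathbb{H}})}.
\]
Hence $y_{\max}/y_{\min}=1+O(1/\sqrt{n})$ on $R$. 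Because $|R|\le C$ is bounded, both sums in the previous display run over a bounded number of edges, and the shortfall $e(F)-e(R)\ge 1$ dominates the $O(1/\sqrt{n})$ distortion of eigenvector entries for $n$ large enough. This gives $\rho(G')>\rho(G^*)$, contradicting $G^*\in SPEX(n,\mathbb{H}_{minor})$.

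The main obstacle lies precisely in this spectral comparison: one needs the eigenvector entries on $R$ to be nearly equal, so that gaining even a single edge outweighs any possible reshuffling of endpoints. Both of the structural inputs, namely the boundedness of $|R|$ and the fact that every vertex of $R$ sees all of $L$, are essential for that uniform estimate; the connectedness of $\Gamma(\mathbb{H})$ plays the complementary role of decoupling $F$ from the rest of $G^*-L$ when checking minor-freeness of the modified graph.
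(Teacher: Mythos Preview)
Your proposal is correct and follows essentially the same route as the paper: suppose $e(R)<ex(|R|,\Gamma(\mathbb{H})_{minor})$, swap $G^*[R]$ for an extremal $\Gamma(\mathbb{H})$-minor free graph on $R$, use connectedness of $\Gamma(\mathbb{H})$ together with Lemma~\ref{lem3.6} to preserve $\mathbb{H}$-minor freeness, and then compare spectral radii via the Rayleigh quotient and the near-uniformity of eigenvector entries on $L''$. The only cosmetic difference is that the paper first argues $G^*-L$ is $K_{1,\alpha_{\mathbb{H}}}$-minor free to get the slightly sharper degree bound $\alpha_{\mathbb{H}}-1$ (hence $y_v\le\sigma_L/(\rho^*-\alpha_{\mathbb{H}}+1)$), whereas you invoke Lemma~\ref{lem3.3} directly for the bound $\alpha_{\mathbb{H}}$; both are sufficient for the $1+O(1/\sqrt{n})$ ratio you need.
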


\begin{proof}
Let $H$ be a minimal member in $\mathbb{H}$.
Recall that $\gamma_{H}=\gamma_\mathbb{H}$ and $\alpha_{H}=\alpha_\mathbb{H}$.
Then $|H|-\gamma_\mathbb{H}=\alpha_\mathbb{H}+1$, and thus
$\Gamma_{\alpha_{\mathbb{H}}+1}^*(H)\subseteq \Gamma(\mathbb{H}).$
By Lemma \ref{lem3.6},
$G^*[L'']$ is $\Gamma_{\alpha_{\mathbb{H}}+1}^*(H)$-minor-free.
Now, choose an $(\alpha_{\mathbb{H}}+1)$-subset $S$ of $V(H)$
such that it contains $\alpha_{\mathbb{H}}$ independent vertices of $H$.
Then, $G^*[L'']$ is $H$-minor-free.
Since $H[S]$ is a subgraph of $K_{1,\alpha_{\mathbb{H}}}$,
$G^*[L'']$ is also $K_{1,\alpha_{\mathbb{H}}}$-minor-free.

Define $x^*_{L''}:=\max_{w\in L''}x_w$
and $\sum_{u\in L}x_u:=\sigma_L$.
Clearly, $x_w\geq\frac{\sigma_L}{\rho^*}$ for each $w\in L''$.
Since $G^*[L'']$ is $K_{1,\alpha_{\mathbb{H}}}$-minor-free,
we have $\rho^*x^*_{L''}\leq \sigma_L+(\alpha_{\mathbb{H}}-1)x^*_{L''},$
which gives
$x^*_{L''}\leq\frac{\sigma_L}{\rho^*-\alpha_{\mathbb{H}}+1}.$

Now, suppose to the contrary that there exists a small component subset
$R$ of $L''$ such that $e(R)<ex\big(|R|,\Gamma(\mathbb{H})_{minor}\big)$.
Then, we can find a $\Gamma(\mathbb{H})$-minor-free graph $G_i''$
on the vertex set $R$ with at least $e(R)+1$ edges.
Let $G$ be the graph obtained from $G^*$ by replacing $E(G^*[R])$ with $E(G_i'')$.
Since $\Gamma(\mathbb{H})$ is a connected family,
$G[L'']$ remains $\Gamma(\mathbb{H})$-minor-free.
By Lemma \ref{lem3.6}, $G$ is also $\mathbb{H}$-minor-free.
Let $\rho=\rho(G)$. We can deduce that
\begin{eqnarray}\label{align.21}
\frac12(\rho-\rho^*)\geq\!\!\!\sum\limits_{uv\in E(G_i'')}\!\!\!x_ux_v
-\!\!\!\sum\limits_{uv\in E(G^*[R])}\!\!\!x_ux_v\geq \frac{e(G_i'')\sigma_L^2}{{\rho^*}^2}
-\frac{e(R)\sigma_L^2}{(\rho^*-\alpha_{\mathbb{H}}+1)^2}.
\end{eqnarray}

Since $R$ is small, $e(R)$ is also bounded by a constant.
Recall that $e(G_i'')\geq e(R)+1$ and $\rho^*\geq\sqrt{\gamma_\mathbb{H}(n-\gamma_\mathbb{H})}$.
Clearly, we have $\rho(G)>\rho^*$
for sufficiently large $n$, a contradiction.
\end{proof}

Let $\mathbb{G}$ be the family
of connected $\Gamma(\mathbb{H})$-minor-free graphs
on at most $n-\gamma_\mathbb{H}$ vertices.
The proof of Lemma \ref{lem3.7} implies that
every member in $\mathbb{G}$ is $K_{1,\alpha_\mathbb{H}}$-minor-free.
Given a member $G_i\in\mathbb{G}$,
denote by $d_i$ its average degree.
We say that $G_i$ is \emph{small} if $|G_i|\leq c$ for some constant $c$
(i.e., $|G_i|$ is independent of $n$).
Let $G_0$ be a member of $\mathbb{G}$ with $d_0=\max_{G_i\in\mathbb{G}}d_i$,
and let $G_1,\ldots,G_s$ be all the non-isomorphic components
in $G^*-L$. We assume that $m(G_1)\geq\cdots\geq m(G_s)$,
where $m(G_i)$ is the number of copies of $G_i$ in $G^*-L$.
By Lemma \ref{lem3.6}, we also have
$G_i\in\mathbb{G}$ for $i\in\{1,\ldots,s\}$.

\begin{lem}\label{lem3.8}
If $G_i$, $i=0,\ldots,s$, are all small,
then $d_1=d_0$ and
$m(G_i)<|G_1|$ for $G_i$ with $d_i<d_0$.
\end{lem}

\begin{proof}
By the choice of $G_0$,
we know that $d_i\leq d_0$ for every $i\in\{1\ldots,s\}$.
Since $\max_{1\leq i\leq s}|G_i|=c$ for some constant integer $c$,
we have $s\leq\sum_{k=1}^c2^{k \choose 2}$, i.e., $s$ is constant.
However, $|L''|$ is sufficiently large,
it follows that $m(G_1)$ is a function of $n$.

We first prove that $d_1=d_0$.
Suppose, for the sake of contradiction, that $d_1<d_0$.
We define $G$ to be
the graph obtain from $G^*$ by replacing $|G_0|$ copies of $G_1$ with
$|G_1|$ copies of $G_0$ in $G^*-L$.
The component subset $V(|G_0|G_1)$ is small,
and $G[L'']$ remains $\Gamma(\mathbb{H})$-minor-free.
However, $|G_0|e(G_1)-|G_1|e(G_0)=\frac12|G_0||G_1|(d_1-d_0)<0$,
contradicting Lemma \ref{lem3.7}.
Thus, we have $d_1=d_0$.

Now, suppose that there exists a component $G_i$
with $d_i<d_0$ and $m(G_i)\geq|G_1|$.
We define $G$ to be
a new graph obtain from $G^*$ by replacing $|G_1|$ copies of $G_i$ with
$|G_i|$ copies of $G_1$ in $G^*[L'']$.
Clearly, we have $|G_1|e(G_i)-|G_i|e(G_1)=\frac12|G_1||G_i|(d_i-d_0)<0$,
which leads to a contradiction, similar to the previous case.
Hence, the lemma holds.
\end{proof}

Let $G^\star$ be a graph with $V(G^\star)=V(G^*)$ and the same set $L$ of dominating vertices,
such that $G^\star-L\in EX(n-|L|,\Gamma(\mathbb{H})_{minor})$.
Furthermore, assume that
$G_{i_1},\ldots,G_{i_t}$ are all the non-isomorphic components
in $G^\star-L$. Clearly, $G_{i_j}\in\mathbb{G}$ for every $j\in\{1,\ldots,t\}$.

\begin{lem}\label{lem3.9}
If both $\{G_i: i=0,1,\ldots,s\}$
and $\{G_{i_j}: j=1,\ldots,t\}$ are small graph families,
then $G^*-L$ also belongs to $EX(n-|L|,\Gamma(\mathbb{H})_{minor})$.
\end{lem}

\begin{proof}
Suppose, for contradiction, that $e(G^*-L)<e(G^\star-L)$.
Since $\max_{1\leq j\leq t}|G_{i_j}|$ is constant,
so is $t$.
The proof of Lemma \ref{lem3.8} implies that
$m(G_{i_j})<|G_1|$ for every $G_{i_j}$ with $d_{i_j}<d_0$.
Recall that $m(G_i)$ denotes the number of copies of $G_i$ in $G^*-L$.
Now, define $m'(G_{i_j})$ as the number of copies of $G_{i_j}$ in $G^\star-L$.
Let ${m_i}\equiv m(G_i) \pmod{|G_1|}$
for $i\in\{1,\ldots,s\}$ and ${m'_{i_j}}\equiv m'(G_{i_j}) \pmod{|G_1|}$ for $j\in\{1,\ldots,t\}$.
We can observe that
${m_i}=m(G_i)$ if $d_i<d_0$,
and ${m'_{i_j}}=m(G'_{i_j})$ if $d_{i_j}<d_0$.

Now, let $G'_1=G^*-L-\cup_{i=1}^sV(m_iG_i)$
and $G'_2=G^\star-L-\cup_{j=1}^tV(m'_{i_j}G_{i_j})$ for simplicity.
Then, both $|G'_1|$ and
$|G'_2|$ are divisible by $|G_1|$, which leads to:
\begin{eqnarray}\label{align.22}
|\cup_{j=1}^tV(m'_{i_j}G_{i_j})|-|\cup_{i=1}^sV(m_iG_i)|
=|G'_1|-|G'_2|=r|G_1|
\end{eqnarray}
for some integer $r$.
Since both $|\cup_{j=1}^tV(m'_{i_j}G_{i_j})|$ and
$|\cup_{i=1}^sV(m_iG_i)|$ are finite, it follows that $r$ is constant.
Observe that every component in $G'_1$ and $G'_2$
has average degree $d_0$.
Combining (\ref{align.22}), we obtain
$e(G'_1)-e(G'_2)=\frac12d_0(|G'_1|-|G'_2|)=re(G_1)$.
Consequently, we have
\begin{eqnarray}\label{align.23}
\sum_{j=1}^te(m'_{i_j}G_{i_j})-\sum_{i=1}^se(m_iG_i)
=e(G^\star-L)-e(G^*-L)+re(G_1)>re(G_1).
\end{eqnarray}

Recall that $m(G_1)$ is a function of $n$.
If $r\geq0$, we define $R=(\cup_{i=1}^sV(m_iG_i))\cup V(rG_1)$.
Then, $R$ is a small component subset of $L''$,
and $|R|=|\cup_{j=1}^tV(m'_{i_j}G_{i_j})|$
by (\ref{align.22}).
However, from (\ref{align.23}), we have 
$e(R)<\sum_{j=1}^te(m'_{i_j}G_{i_j})$,
which contradicts Lemma \ref{lem3.7}.

If $r<0$, we define $R=\cup_{i=1}^sV(m_iG_i)$.
Then, $R$ is still a small component subset of $L''$,
and from (\ref{align.22}), we have
$|R|=|\cup_{j=1}^tV(m'_{i_j}G_{i_j})|+|V((-r)G_1)|$.
However, by (\ref{align.23}),
we obtain $e(R)<\sum_{j=1}^te(m'_{i_j}G_{i_j})+(-r)e(G_1)$,
which also leads to a contradiction.
\end{proof}

\begin{lem}\label{lem3.10}
If there exists a member in $\mathbb{G}$ that contains a bicyclic subgraph,
then both $\{G_i: i=0,1,\ldots,s\}$
and $\{G_{i_j}: j=1,\ldots,t\}$ consist entirely of small graphs.
\end{lem}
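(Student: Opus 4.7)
The plan is to exploit the fact that members of $\mathbb{G}$ have average degree essentially bounded by $2$, while a bicyclic member forces $d_0$ to be a fixed constant strictly greater than $2$. From the proof of Lemma \ref{lem3.7} every $G\in\mathbb{G}$ is connected and $K_{1,\alpha_{\mathbb{H}}}$-minor free, so by Lemma \ref{lem3.1}
$$d(G)=\frac{2e(G)}{|G|}\leq 2+\frac{C'}{|G|}$$
for a constant $C'=\alpha_{\mathbb{H}}(\alpha_{\mathbb{H}}-3)$ depending only on $\mathbb{H}$. A member $G^{\dagger}\in\mathbb{G}$ carrying a bicyclic subgraph satisfies $e(G^{\dagger})\geq|G^{\dagger}|+1$, hence $d_0\geq d(G^{\dagger})\geq 2+2/|G^{\dagger}|>2$ is a fixed constant. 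Combining this lower bound with the upper bound above forces $|G_0|\leq C'/(d_0-2)$, so $G_0$ itself is small, and for every $G\in\mathbb{G}$ with $|G|$ larger than some constant $N_0$ one has $d_0-d(G)\geq (d_0-2)/2>0$.

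Next I would rule out any non-small component of $G^*-L$. Suppose some $G_i$ has $|G_i|\to\infty$ with $n$. Form $G$ by erasing the edges of $G_i$ and inserting $\lfloor|G_i|/|G_0|\rfloor$ vertex-disjoint copies of $G_0$ on $V(G_i)$, leaving at most $|G_0|-1$ isolated vertices. Since $G_0\in\mathbb{G}$ and $\Gamma(\mathbb{H})$ is connected, $G-L$ is $\Gamma(\mathbb{H})$-minor free, so Lemma \ref{lem3.6} makes $G$ itself $\mathbb{H}$-minor free. The eigenvector bounds from the proof of Lemma \ref{lem3.7}, namely $\sigma_L/\rho^*\leq x_w\leq \sigma_L/(\rho^*-\alpha_{\mathbb{H}}+1)$ for every $w\in L''$ with $\sigma_L=\sum_{u\in L}x_u$, yield
$$\rho(G)-\rho^*\geq 2\sigma_L^2\left[\frac{|E_{\mathrm{new}}|}{{\rho^*}^2}-\frac{|E_{\mathrm{rem}}|}{(\rho^*-\alpha_{\mathbb{H}}+1)^2}\right].$$
As $|G_i|\to\infty$, $|E_{\mathrm{new}}|/|E_{\mathrm{rem}}|\to d_0/d(G_i)\geq d_0/(2+C'/|G_i|)$, which is at least $1+(d_0-2)/4$ for $|G_i|$ sufficiently large. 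On the other hand Lemma \ref{lem2.4} gives $\rho^*\geq\sqrt{\gamma_{\mathbb{H}}(n-\gamma_{\mathbb{H}})}$, whence $(\rho^*/(\rho^*-\alpha_{\mathbb{H}}+1))^2=1+O(1/\sqrt{n})$. The bracketed quantity is therefore strictly positive for $n$ large, so $\rho(G)>\rho^*$, contradicting the extremality of $G^*$.

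An analogous swap applies to each $G_{i_j}$. If some $G_{i_j}$ were non-small, replacing the edges of $G_{i_j}$ by $\lfloor|G_{i_j}|/|G_0|\rfloor$ copies of $G_0$ on $V(G_{i_j})$ yields a $\Gamma(\mathbb{H})$-minor free graph on $n-\gamma_{\mathbb{H}}$ vertices whose edge count exceeds $e(G^\star-L)$ by
$$\lfloor |G_{i_j}|/|G_0|\rfloor e(G_0)-e(G_{i_j})\geq \tfrac{1}{2}|G_{i_j}|\bigl(d_0-d(G_{i_j})\bigr)-|G_0|^2,$$
which is strictly positive once $|G_{i_j}|$ is large enough, contradicting $G^\star-L\in EX(n-\gamma_{\mathbb{H}},\Gamma(\mathbb{H})_{minor})$. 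The main obstacle is the calibration in the middle paragraph: the constant-size gap $d_0-d(G_i)\geq c$ must defeat the $O(1/\sqrt{n})$ slack from the narrow window for $x_w$ on $L''$, and it is precisely the bicyclic hypothesis that pins $d_0$ strictly above $2$ and makes this work.
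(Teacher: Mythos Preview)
Your overall strategy matches the paper's: bound average degrees of $K_{1,\alpha_{\mathbb H}}$-minor-free graphs by $2+O(1/|G|)$ via Lemma~\ref{lem3.1}, pin $d_0$ strictly above $2$ using the bicyclic hypothesis, and then swap any putatively large component for copies of a fixed small dense graph, using the eigenvector window $\sigma_L/\rho^*\le x_w\le \sigma_L/(\rho^*-\alpha_{\mathbb H}+1)$ to get $\rho(G)>\rho^*$ (and a pure edge-count to contradict edge-extremality of $G^\star-L$). This is exactly the paper's argument.

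There is, however, one genuine gap. You assert that $d_0\ge d(G^\dagger)\ge 2+2/|G^\dagger|$ is a \emph{fixed constant}, but nothing in the hypothesis says $|G^\dagger|$ is bounded independently of $n$: the member of $\mathbb G$ containing a bicyclic subgraph could in principle have order growing with $n$, in which case $2/|G^\dagger|$ collapses to $o(1)$ and fails to beat the $O(1/\sqrt n)$ slack from the eigenvector window. The paper circumvents this by contracting: any connected graph with a bicyclic subgraph contains as a minor one of the two fixed graphs $H_0\in\{K_1\nabla P_3,\,K_1\nabla 2P_2\}$, each $\Gamma(\mathbb H)$-minor free (since minor-freeness is minor-closed) and each with average degree $\ge 2.4$. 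This yields $d_0\ge 2.4$ outright and, as a side benefit, forces $\alpha_{\mathbb H}\ge 4$ so that Lemma~\ref{lem3.1} applies. Once you replace your $G^\dagger$ by this $H_0$ (and use $H_0$, or equivalently the now-provably-small $G_0$, as the packing block), your argument goes through verbatim.
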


\begin{proof}
Suppose that some member in $\mathbb{G}$ contains a bicyclic subgraph.
Then, it has an $H_0$ minor
where $H_0\in\{K_1\nabla 2P_2,K_1\nabla P_3\}$,
and thus a $K_{1,3}$ minor.
Recall that every member in $\mathbb{G}$ is $K_{1,\alpha_\mathbb{H}}$-minor-free.
Hence, we conclude that $\alpha_{\mathbb{H}}\geq4$.

Let $\mathbb{G}'$ be the subset of $\mathbb{G}$
excluding all small members.
If $\mathbb{G}'$ is empty, we are done.
Next, assume that $\mathbb{G}'$ is non-empty, and let
$G_{i_0}\in \mathbb{G}'$.
Since $G_{i_0}$ is $K_{1,\alpha_\mathbb{H}}$-minor-free,
by Lemma \ref{lem3.1}, we have $e(G_{i_0})\leq {\alpha_{\mathbb{H}}\choose 2}+|G_{i_0}|-\alpha_{\mathbb{H}}.$
Thus, $d_{i_0}=2e(G_{i_0})/|G_{i_0}|\rightarrow 2$ as $|G_{i_0}|\rightarrow \infty$,
and since $|G_{i_0}|$ is sufficiently large, we have $d_{i_0}<2.2$.
We now conclude that $d_i<2.2$ for any $G_i\in \mathbb{G}'$.

Recall that some member in $\mathbb{G}$ contains an $H_0$ minor.
Then, $H_0$ is $\Gamma(\mathbb{H})$-minor-free.
A simple calculation gives 
$2e(H_0)/|H_0|\geq2.4$.
By the definition of $G_0$,
we have $d_0\geq2e(H_0)/|H_0|\geq2.4$.
Since $d_i<2.2$ for every $G_i\in \mathbb{G}'$,
it is clear that $G_0\notin\mathbb{G}'$.

Suppose that $G_{i_j}\in\mathbb{G}'$ for some $j\in\{1,\ldots,t\}$.
Then, $d_{i_j}<2.2$ and $|G_{i_j}|=a|H_0|+b$,
where $0\leq b<|H_0|$ and $a$ depends on $n$.
Hence, $e(G_{i_j})<1.1(a|H_0|+b).$
Now, define $G''_{i_j}=aH_0\cup bK_1$.
Then, $G''_{i_j}$ is $\Gamma(\mathbb{H})$-minor-free,
and $e(G''_{i_j})=ae(H_0)\geq1.2a|H_0|.$
Thus, $e(G''_{i_j})>e(G_{i_j}),$
contradicting the fact that
$G^\star[L'']\in EX(n-|L|,\Gamma(\mathbb{H})_{minor})$.
Hence, $G_{i_j}\notin\mathbb{G}'$ for any $j\in\{1,\ldots,t\}$.

Finally, suppose $G_i\in\mathbb{G}'$
for some $i\in\{1,\ldots,s\}$.
Then, $d_{i}<2.2$ and $|G_{i}|=a'|H_0|+b'$,
where $0\leq b'<|H_0|$ and $a'$ depends on $n$.
Thus, $e(G_i)<1.1(a'|H_0|+b').$
Now define $G''_i=a'H_0\cup b'K_1$.
Then $G''_i$ is $\Gamma(\mathbb{H})$-minor-free,
and $e(G''_i)=a'e(H_0)\geq1.2a'|H_0|.$
It is easy to see that $e(G''_i)>1.05e(G_i).$
Let $G$ be the graph obtained from $G^*$
by replacing $E(G_i)$ with $E(G_i'')$.
By Lemma \ref{lem3.6}, $G$ is also $\mathbb{H}$-minor-free.
Choosing $R=V(G_i)$ in inequality (\ref{align.21}),
we obtain
\begin{eqnarray*}
\frac12\big(\rho(G)-\rho^*\big)\geq\!\!\!\sum\limits_{uv\in E(G_i'')}\!\!\!x_ux_v
-\!\!\!\sum\limits_{uv\in E(G_i)}\!\!\!x_ux_v\geq \frac{e(G_i'')\sigma_L^2}{{\rho^*}^2}
-\frac{e(G_i)\sigma_L^2}{(\rho^*-\alpha_{\mathbb{H}}+1)^2}.
\end{eqnarray*}
Combining $e(G''_i)>1.05e(G_i)$ yields $\rho(G)>\rho^*$,
which contradicts the definition of $G^*$.
Therefore, we conclude that $G_i\notin\mathbb{G}'$
for any $i\in\{1,\ldots,s\}$.
\end{proof}

Given a rooted tree $T$,
a \emph{branching vertex} is a vertex with degree at least three.
An \emph{edge switching} on $T$ refers to the operation of constructing a new tree
by removing an edge $u_1v_1$ and adding a new edge $u_2v_2$.
A \emph{pruning} on $T$ is a specific case of edge switching. 
It involves constructing a new tree $T'=T-\{u_1v_1\}+\{v_1v_2\}$,
where $u_1$ is a branching vertex,
$v_1$ is its child, and $v_2$ is a leaf that is not a descendant of $v_1$.
We conclude this section by
proving Theorem \ref{thm1.3}.

\begin{proof}
Given Lemmas \ref{lem3.9} and \ref{lem3.10},
if the members in $\mathbb{G}$ are all small,
or if some member contains a bicyclic subgraph,
then $G^*-L\in EX(n-|L|,\Gamma(\mathbb{H})_{minor})$,
and the proof is complete.
Next, assume that
every member in $\mathbb{G}$ is either a tree or a unicyclic graph,
and that there exists a member $G_{i_0}\in\mathbb{G}$ such that
$|G_{i_0}|$ is sufficiently large.
Recall that $G^\star-L\in EX(n-|L|,\Gamma(\mathbb{H})_{minor})$.
Therefore, every component of $G^\star-L$ is a member of $\mathbb{G}$,
and hence, $e(G^\star-L)\leq |G^\star-L|=n-|L|$.

Let $G_i$ be an arbitrary member in $\mathbb{G}$, and let $V(G_i)=\cup_{k=1}^3U_k$,
where $U_k=\{v\in V(G_i): d_{G_i}(v)=k\}$ for $k\in\{1,2\}$,
and $U_3=V(G_i)\setminus(U_1\cup U_2)$.
Recall that every member in $\mathbb{G}$ is $K_{1,\alpha_\mathbb{H}}$-minor-free.
Thus, $\max\{\Delta(G_i),|U_1|\}<\alpha_\mathbb{H}.$
Since $G_i$ is either a tree or a unicyclic graph,
we have $e(G_i)\leq|G_i|=\sum_{i=1}^3|U_k|$.
Furthermore, 
$e(G_i)\geq\frac12(|U_1|+2|U_2|+3|U_3|).$
Combining the above three inequalities, we deduce that $|U_3|\leq|U_1|<\alpha_\mathbb{H}.$

On the one hand,
the inequality $\max\{\Delta(G_{i}),|U_3|\}<\alpha_\mathbb{H}$
implies that every $G_i$ can be transformed into a path or a lollipop graph
by at most ${\alpha_\mathbb{H}}^2$ steps of pruning.
On the other hand,
since $|U_1|+|U_3|<2\alpha_\mathbb{H}$ for $G_{i_0}$,
but $|G_{i_0}|$ is sufficiently large,
$G_{i_0}$ must contain a path of sufficient length.
Note that $G_{i_0}$ is $\Gamma(\mathbb{H})$-minor-free.
Thus, $P_k\notin \Gamma(\mathbb{H})$ for any fixed positive integer $k$.

Recall that $G^*-L$ contains $s$ non-isomorphic components $G_1,\ldots,G_s$,
each of which is either a tree or unicyclic.
Suppose that $G^*[L'']$ contains at least two tree components, denoted $G_i$ for $i\in\{1,2\}$.
Then, $G_i$ can be transformed into a path $P_{|G_i|}$
by at most ${\alpha_\mathbb{H}}^2$ steps of pruning.
This implies that $P_{|G_i|}=G_i-E_i'+E_i''$,
where $E_i'\subseteq E(G_i)$, $E_i''$ is set of non-edges of $G_i$,
and $|E_i'|=|E_i''|\leq {\alpha_\mathbb{H}}^2$.
Let $P$ be a path obtained from
$P_{|G_1|}\cup P_{|G_2|}$ by adding an edge.
Moreover, define $G$ as the graph obtained from $G^*$
by replacing $E(G_1\cup G_2)$ with $E(P)$.
Since $P_k\notin \Gamma(\mathbb{H})$ for any positive integer $k$,
$G-L$ remains $\Gamma(\mathbb{H})$-minor-free.
However, similarly to (\ref{align.21}), we have
\begin{eqnarray}\label{align.24}
\frac12\big(\rho(G)\!-\!\rho^*\big)\geq\!\!\!\!\!\sum\limits_{uv\in E(P)}\!\!\!\!x_ux_v-\!\!\!\!\!\!\sum\limits_{uv\in E(G_1\cup G_2)}
\!\!\!\!\!\!\!\!x_ux_v\geq \frac{(|E_1''\!\cup\!E_2''|\!+\!1)\sigma_L^2}{{\rho^*}^2}
-\frac{|E_1'\!\cup\!E_2'|\sigma_L^2}{(\rho^*\!-\!\alpha_{\mathbb{H}}\!+\!1)^2},
\end{eqnarray}
which implies that $\rho(G)>\rho^*$,
a contradiction.

Now, we conclude that $G^*-L$ contains at most one tree component.
If it does not contain any tree component,
then $e(G^*-L)=n-|L|\geq e(G^\star-L)$, and thus $G^*-L\in EX(n-|L|,\Gamma(\mathbb{H})_{minor})$,
as desired.
It remains to consider the case where $G^*-L$ has exactly one tree component, say $G_1$.
In this case, $e(G^*-L)=n-|L|-1$.
If $e(G^\star-L)\leq n-|L|-1$,
then $G^*-L\in EX(n-|L|,\Gamma(\mathbb{H})_{minor})$.
We may therefore assume that $e(G^\star-L)=n-|L|$,
which implies that every component of $G^\star-L$ is unicyclic.
We will now derive a contradiction to complete the proof.

We can select at most three components, $G_1,G_2,G_3$, of $G^*-L$,
such that $\sum_{i=1}^3|G_i|\geq6$, where $G_1$ is a tree, and $G_2$ and $G_3$ (if there exist) are unicyclic.
Recall that $G_1$ can be transformed to a path $P_{|G_1|}$
by at most ${\alpha_\mathbb{H}}^2$ steps of pruning,
and $G_2\cup G_3$ can be transformed to two lollipop graphs
by at most $2{\alpha_\mathbb{H}}^2$ steps of pruning.
Hence, $\cup_{i=1}^3G_i$ can be transformed into a cycle $C$ of length $\sum_{i=1}^3|G_i|$
by switching at most $3{\alpha_\mathbb{H}}^2+2$ edges and adding an edge.
Define $G=G^*-\cup_{i=1}^3E(G_i)+E(C)$.
Similarly to (\ref{align.24}), we obtain $\rho(G)>\rho^*$.
Let $|C|=3a+b$, where $b\in\{0,1,2\}$,
and let $G'$ be obtained from $G$
by replacing the cycle $C$ with $a-b$ copies of $C_3$ and $b$ copies of $C_4$.
Since both $G[V(C)]$ and $G'[V(C)]$ are $2$-regular, it is easy to verify that $\rho(G')=\rho(G)$.

Recall that every component of $G^\star-L$ is unicyclic.
Thus, $G^\star-L$ contains a $C_3$ minor,
which implies that $C_3$ is $\Gamma(\mathbb{H})$-minor-free.
If every unicyclic member of $\mathbb{G}$ is a triangle,
then $G^\star-L$ consists of $\frac{n-|L|}{3}$ copies of $C_3$,
so $3\mid (n-|L|)$.
Similarly, we have $3\mid (3a+b)$, and thus $b=0$. 
Now, $G'-L$ is $\Gamma(\mathbb{H})$-minor-free, but
$\rho(G')=\rho(G)>\rho(G^*)$, a contradiction.
Hence, $\mathbb{G}$ contains at least one unicyclic member that is not isomorphic to $C_3$.
This member must contain $H_0$ as a minor, where $H_0$ is either $C_4$ or $K_{1,3}+e$.
If $H_0\cong C_4$ or $b=0$, then $G'-L$ is still $\Gamma(\mathbb{H})$-minor-free, but
$\rho(G')=\rho(G)>\rho(G^*)$, a contradiction.
If $H_0\cong K_{1,3}+e$ and $b\neq0$, then we further define $G''$
as the graph obtained from $G'$
by replacing $b$ copies of $C_4$ with $b$ copies of $ K_{1,3}+e$.
By a well-known theorem of Wu, Xiao and Hong (see \cite{WU}, Theorem 1),
we can easily see that $\rho(G'')>\rho(G')>\rho(G^*)$.
But now, $G''-L$ is clearly $\Gamma(\mathbb{H})$-minor-free,
which is again a contradiction.
This completes the proof of Theorem \ref{thm1.3}.
\end{proof}

\section{Excluding complete $r$-partite minors}\label{section4}

In this section, we prove Theorem \ref{thm1.8}.
Recall that $\overline{G}$ denotes the complement of a graph $G$,
and $H_{s_1,s_2}=(\beta-1)K_{1,s_2}\cup K_{1,s_2+\beta_0}$,
where $0\leq \beta_0\leq s_2$ and $\beta(s_2+1)+\beta_0=s_1+1$.
Clearly, $H_{s_1,s_2}$ is a star forest of order $s_1+1$.
In particular, $H_{s_1,1}\cong\frac{s_1+1}{2}K_{1,1}$ for odd $s_1$,
and $H_{s_1,1}\cong\frac{s_1-2}{2}K_{1,1}\cup K_{2,1}$ for even $s_1$.
Let $S\big(\overline{H_{s_1,s_2}}\big)$ be the graph obtained
from $\overline{H_{s_1,s_2}}$ by subdividing
an edge whose endpoints have minimum degree sum.
One can observe that $\overline{H_{4,1}}\cong S^1(K_4)$ and
$S(\overline{H_{4,1}})\cong S^2(K_4)$.

Note that $SPEX(n,\{K_{s_1,s_2}\}_{minor})$ was determined for $s_1\geq4$ and $s_2\geq2$ in \cite{ZL},
and for the remaining cases in \cite{NIKI1,NIKI,WANG,ZHAI}.
These results resolved a conjecture posed by Tait \cite{Tait}.
In fact, these results can be rewritten as a slightly stronger version, as follows.

\begin{thm}\label{thm4.1}
Let $s_1\geq s_2\geq2$,
$\beta=\lfloor\frac{s_1+1}{s_2+1}\rfloor$ and $n-\gamma=ps_1+q$ \emph{($1\leq q\leq s_1$)}.
If $G$ is the join of a $\gamma$-clique
and an $(n-\gamma)$-vertex $\Gamma^*_{s_1+1}(K_{s_1,s_2})$-minor-free graph,
then $\rho(G)\leq \rho(K_\gamma\nabla G^\blacktriangle)$, with equality if and only if
$G\cong K_\gamma\nabla G^\blacktriangle$, where $G^\blacktriangle$ is defined as in (\ref{align.00}).
\end{thm}

We now characterize $SPEX(n,\{H\}_{minor})$
for $H=K_{s_1,\ldots,s_r}$, where $r\geq2$ and $s_1\geq\cdots\geq s_r$.
Clearly, $\alpha_H=s_1$ and
$\Gamma_{\alpha_H+1}^*(H)=\Gamma_{s_1+1}^*(H).$
We first consider the case when $s_2\geq2$.

\begin{thm}\label{thm4.2}
Let $\beta=\lfloor\frac{s_1+1}{s_2+1}\rfloor$,
$\gamma=\sum_{2}^{r}s_i-1$ and $n-\gamma=ps_1+q$ \emph{($1\leq q\leq s_1$)}.
If $s_2\geq2$, then $SPEX(n,\{K_{s_1,\ldots,s_r}\}_{minor})
=\{K_\gamma\nabla G^\blacktriangle\}$.
\end{thm}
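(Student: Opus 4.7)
The plan is to reduce Theorem~\ref{thm4.2} to the bipartite case already handled in Theorem~\ref{thm4.1}. The main reduction is the set identity $\Gamma^*_{s_1+1}(K_{s_1,\ldots,s_r})=\Gamma^*_{s_1+1}(K_{s_1,s_2})$, after which the theorem follows by combining Theorems~\ref{thm1.1}, \ref{thm1.2}, \ref{thm4.1} and Lemma~\ref{lem3.6}.

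First I would invoke the general machinery. Let $G^*\in SPEX(n,\{H\}_{minor})$ with $H=K_{s_1,\ldots,s_r}$. Since $\alpha_H=s_1$, we have $\gamma_H=\sum_{i=2}^{r}s_i-1=\gamma\geq 1$, so Theorem~\ref{thm1.1} supplies a set $L$ of exactly $\gamma$ dominating vertices with $G^*=K_\gamma\nabla(G^*-L)$, and Theorem~\ref{thm1.2} then shows that $G^*-L$ is $\Gamma^*_{s_1+1}(H)$-minor saturated, and in particular $\Gamma^*_{s_1+1}(H)$-minor free.

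The heart of the proof is the identity above. Every $(s_1+1)$-vertex induced subgraph of $H$ has the form $K_{a_1,\ldots,a_k}$ with $a_1\geq\cdots\geq a_k\geq 1$, $\sum_{i=1}^{k}a_i=s_1+1$, and (after sorting) $a_i\leq s_i$ for each $i\leq k$. For a bipartite member $K_{a,\,s_1+1-a}$ I would check irreducibility in two substeps: on a fixed $(s_1+1)$-vertex set, any other bipartite $K_{a',b'}$ whose edge set lies inside that of $K_{a,\,s_1+1-a}$ must share the original bipartition up to swap and hence coincides with it; and no $K_{c_1,\ldots,c_l}$ with $l\geq 3$ embeds as a subgraph, because three vertices drawn from three distinct $c$-classes form a triangle inside a bipartite host, which is impossible. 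For a member $K_{a_1,\ldots,a_k}$ with $k\geq 3$, set $b_2=a_k$ and $b_1=s_1+1-a_k$; since $a_k\leq a_2\leq s_2$ and $a_k\geq 1$, we have $b_2\leq s_2$ and $b_1\leq s_1$, so $K_{b_1,b_2}\in\Gamma_{s_1+1}(H)$. Merging the first $k-1$ parts of the partition exhibits $K_{b_1,b_2}$ as a spanning subgraph of $K_{a_1,\ldots,a_k}$, and strict convexity of $\binom{\cdot}{2}$ on the $k-1\geq 2$ positive integers $a_1,\ldots,a_{k-1}$ gives $\sum_{i=1}^{k-1}\binom{a_i}{2}<\binom{b_1}{2}$, so $K_{b_1,b_2}$ is a \emph{proper} subgraph and $K_{a_1,\ldots,a_k}$ is reducible. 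Combining the two substeps yields the identity.

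With the identity in hand, Theorem~\ref{thm4.1} applied to the $\Gamma^*_{s_1+1}(K_{s_1,s_2})$-minor free graph $G^*-L$ gives $\rho(G^*)\leq\rho(K_\gamma\nabla G^\blacktriangle)$, with equality iff $G^*\cong K_\gamma\nabla G^\blacktriangle$. Conversely, $G^\blacktriangle$ is $\Gamma^*_{s_1+1}(K_{s_1,s_2})$-minor free by Theorem~\ref{thm4.1} and therefore $\Gamma^*_{s_1+1}(H)$-minor free by the identity, so Lemma~\ref{lem3.6} certifies $K_\gamma\nabla G^\blacktriangle$ as $H$-minor free; the maximality of $G^*$ then forces equality and hence $G^*\cong K_\gamma\nabla G^\blacktriangle$. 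The main obstacle is the reducibility argument for $k\geq 3$: one must verify both that the merged bipartition $(b_1,b_2)$ genuinely lands in the admissible range for $\Gamma_{s_1+1}(H)$ (using $a_k\leq a_2\leq s_2$) and that the merger strictly decreases the edge count; everything else is a direct assembly of the previously established results.
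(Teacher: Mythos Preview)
Your proposal is correct and follows essentially the same route as the paper. Both proofs reduce to the bipartite case by establishing $\Gamma^*_{s_1+1}(K_{s_1,\ldots,s_r})=\Gamma^*_{s_1+1}(K_{s_1,s_2})$ (the paper writes the right side as $\Gamma_{s_1+1}(K_{s_1,s_2})$, which is the same set) and then invoke Theorems~\ref{thm1.1}, \ref{thm1.2} and \ref{thm4.1}; your ``merge all but the smallest part and use strict convexity of $\binom{\cdot}{2}$'' argument for the reducibility of $K_{a_1,\ldots,a_k}$ with $k\ge 3$ is a clean rephrasing of the paper's vertex-shifting into $S_1$, and you are a bit more explicit than the paper both in verifying that every bipartite member $K_{a,\,s_1+1-a}$ is irreducible and in certifying via Lemma~\ref{lem3.6} that $K_\gamma\nabla G^\blacktriangle$ is $H$-minor free.
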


\begin{proof}
For $H=K_{s_1,\ldots,s_r}$, let $\cup_{i=1}^rS_i$ be the $r$-partite partition of $V(H)$,
where $|S_i|=s_i$.
Given an arbitrary $(\alpha_H+1)$-subset $S$ of $V(H)$.
We first claim that there exists an $(\alpha_H+1)$-subset $S'$
such that $S'\subseteq S_1\cup S_2$
and $H[S']$ is isomorphic to a subgraph of $H[S]$.

Let $t_i=|S_i\cap S|$ for $i\in\{1,\ldots,r\}$.
Up to isomorphism of $H[S]$, one can assume that $t_1\geq\cdots\geq t_r$.
If $t_3=0$, then we choose $S'=S$, as required.
Suppose now that $t_3\geq1$. Then $t_2\geq1$.
Note that $\sum_{i=1}^rt_i=|S|=s_1+1$.
Thus, $\sum_{i=3}^rt_i=s_1+1-t_1-t_2\leq s_1-t_1=|S_1\setminus S|.$
Now, let $S'$ be obtained from $S$ by replacing $\sum_{i=3}^rt_i$ vertices
in $S\setminus(S_1\cup S_2)$ with $\sum_{i=3}^rt_i$ vertices in
$S_1\setminus S$.
Then, $S'\subseteq S_1\cup S_2$,
and it is easy to see that
$H[S']$ is isomorphic to a subgraph of $H[S]$.
Hence, the claim holds, which
further implies
$\Gamma_{\alpha_H+1}^*(H)=\Gamma_{s_1+1}^*(K_{s_1,s_2})$.

By Theorem \ref{thm1.1}, every graph $G^*$ in $SPEX(n,\{H\}_{minor})$ contains a set $L$ of
dominating vertices, where $|L|=|H|-\alpha_H-1=\sum_{2}^{r}s_i-1$.
By Theorem \ref{thm1.2}, $G^*-L$ is $\Gamma_{s_1+1}^*(K_{s_1,s_2})$-minor-free.
Now, setting $\gamma=\sum_{2}^{r}s_i-1$ in Theorem \ref{thm4.1},
we immediately obtain that $G^*\cong K_\gamma\nabla G^\blacktriangle$.
\end{proof}

Given Theorem \ref{thm4.2}, it remains to characterize $SPEX(n,\{H\}_{minor})$
for the complete $r$-partite graph $H=K_{s_1,1,\ldots,1}$.
Now, $H$ is a book $B_{r-1,s_1}$,
and for $r=2$, $SPEX(n,\{K_{s_1,1}\}_{minor})$ was determined in \cite{ZL}.
Hence, we assume that $\sum_{2}^{r}s_i\geq 2$ in Theorem \ref{thm1.8},
which implies that $r\geq3$.
In this case, we have
$\alpha_H=s_1$, $\gamma_H=|H|-\alpha_H-1=r-2\geq1$, and
\begin{eqnarray}\label{align.16}
\Gamma^*_{\alpha_H+1}(H)=\Gamma^*_{s_1+1}(K_{s_1,1,\ldots,1})=\{K_{s_1,1}\}.
\end{eqnarray}

Assume that $G^*\!\in\!SPEX(n,\{B_{r-1,s_1}\}_{minor})$,
and that $X^*\!=\!(x_1,\ldots,x_n)^T$ is the positive
unit eigenvector corresponding to $\rho^*:=\rho(G^*)$.
By Theorem \ref{thm1.1}, we have $|L|=\gamma_H=r-2$.

\begin{lem}\label{lem4.1}
Let $\gamma:=r-2$. Then, we have the inequality:
\begin{eqnarray}\label{align.17}
{\rho^*}^2-(s_1+\gamma-2)\rho^*\leq\gamma(n-\gamma)-(\gamma-1)(s_1-1),
\end{eqnarray}
with equality if and only if $G^*-L$ is
an $(s_1-1)$-regular $K_{s_1,1}$-minor-free graph.
\end{lem}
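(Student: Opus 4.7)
The plan is to exploit two structural consequences of the setup: the $\gamma$ dominating vertices in $L$ force a rigid symmetry of $X^*$ on $L$, and the constraint on $G^*-L$ coming from Theorem~\ref{thm1.2} together with (\ref{align.16}) forces a tight degree bound on $V(G^*)\setminus L$. First I would note that (\ref{align.16}) and Theorem~\ref{thm1.2} imply $G^*-L$ is $K_{1,s_1}$-minor saturated, so in particular $K_{1,s_1}$-minor free; this gives at once
$$d_{G^*-L}(v)\le s_1-1 \quad \text{for every } v\in V(G^*)\setminus L,$$
since any vertex of degree $s_1$ in $G^*-L$ produces a $K_{1,s_1}$ subgraph and hence a minor.

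Next, writing $W:=\sum_{u\in L}x_u$ and $T:=\sum_{v\in V(G^*)\setminus L}x_v$, I would apply the eigenvalue equation at a dominating vertex $u\in L$: since $u$ is adjacent to every other vertex,
$$\rho^*x_u=(W-x_u)+T,\qquad\text{i.e.,}\qquad (\rho^*+1)x_u=W+T.$$
Hence $x_u$ takes a common value $x_L$ for every $u\in L$, and then $W=\gamma x_L$ together with the displayed equation gives
$$T=(\rho^*+1-\gamma)x_L. \tag{$\ast$}$$

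Finally, I would apply the eigenvalue equation at every vertex $v\in V(G^*)\setminus L$, writing $\rho^*x_v=W+\sum_{w\in N_{G^*-L}(v)}x_w$, and then sum over such $v$. The double sum collapses to $\sum_{w\in V(G^*)\setminus L}d_{G^*-L}(w)\,x_w$, which the degree bound controls by $(s_1-1)T$. This yields $(\rho^*-s_1+1)T\le(n-\gamma)W=\gamma(n-\gamma)x_L$. Substituting ($\ast$) and cancelling $x_L>0$ (legitimate because $X^*$ is positive on the connected graph $G^*$) produces
$$(\rho^*-s_1+1)(\rho^*+1-\gamma)\le \gamma(n-\gamma),$$
which after expansion is exactly (\ref{align.17}). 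The only inequality in the chain is the degree bound, and with $X^*$ strictly positive it is tight iff $d_{G^*-L}(w)=s_1-1$ for every $w$, i.e., $G^*-L$ is $(s_1-1)$-regular; combined with the $K_{1,s_1}$-minor freeness already in force, this gives the claimed equality characterization, and conversely any such $G^*-L$ turns every step into an equality. I do not anticipate a genuine obstacle here; the argument is essentially a single eigenvector computation whose success hinges on identifying the $L$-symmetry of $X^*$ up front.
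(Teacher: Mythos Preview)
Your argument is correct and reaches the same factored inequality $(\rho^*-s_1+1)(\rho^*-\gamma+1)\le\gamma(n-\gamma)$ as the paper, but by a slightly different route. The paper works pointwise: it takes $v^*\in V(G^*)\setminus L$ with maximal coordinate and bounds the eigenvalue equation at $v^*$ (giving $(\rho^*-s_1+1)x_{v^*}\le\gamma x_{u^*}$) and at $u^*\in L$ (giving $(\rho^*-\gamma+1)x_{u^*}\le(n-\gamma)x_{v^*}$), then multiplies. You instead use the exact identity $T=(\rho^*+1-\gamma)x_L$ from the dominating-vertex equation and a single summed inequality $(\rho^*-s_1+1)T\le\gamma(n-\gamma)x_L$ over all of $V(G^*)\setminus L$. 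Your version has the mild advantage that the equality analysis is cleaner: there is only one inequality in the chain, and with $X^*$ positive it is tight precisely when every $d_{G^*-L}(w)=s_1-1$, so the converse is immediate. In the paper's version one first deduces from equality at $u^*$ that all $x_v$ with $v\notin L$ coincide, and then infers regularity; the converse there uses that the join of two regular graphs has a two-valued Perron vector.
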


\begin{proof}
By Theorem \ref{thm1.2}, and in view of (\ref{align.16}),
we can observe that
$G^*-L$ is $K_{s_1,1}$-minor-saturated. 
Hence, $\Delta(G^*-L)\leq s_1-1$.
By symmetry, $x_u$ is constant for $u\in L$.
Choose $u^*\in L$
and $v^*\in V(G^*)\setminus L$ such that $x_{v^*}=\max_{v\in V(G^*)\setminus L}x_v.$
Note that $\rho^*x_{u}=\sum_{v\in N_{G^*}(u)}x_v$ for $u\in V(G^*)$. Thus,
\begin{eqnarray}\label{align.18}
\rho^*x_{v^*}\leq\gamma x_{u^*}+(s_1-1)x_{v^*}~~
\mbox{and}~~
\rho^*x_{u^*}\leq(\gamma-1)x_{u^*}+(n-\gamma)x_{v^*}.
\end{eqnarray}
Combining the two inequalities in (\ref{align.18}),
we obtain (\ref{align.17}) immediately.

Next, we characterize the equality case in (\ref{align.17}).
If the equality holds, then both
inequalities in (\ref{align.18}) must become equalities.
This implies that $d_{G^*-L}(v)=s_1-1$ for each $v\in V(G^*)\setminus L$,
meaning that $G^*-L$ is $(s_1-1)$-regular.
Conversely, if $G^*-L$ is $(s_1-1)$-regular,
then $G^*$ is the join of two regular graphs, which implies that
both $X^*\parallel_{L}$ and $X^*\parallel_{V(G^*)\setminus L}$
are constant vectors.
As a result, both inequalities in (\ref{align.18}) hold as equalities,
and consequently, (\ref{align.17}) holds as an equality.
\end{proof}

For an arbitrary $v\in V(G^*)\setminus L$,
we have $\rho^*x_v=\gamma x_{u^*}+\sum_{w\in N_{G^*}(v)\setminus L}x_w$,
and thus, $\rho^*x_v\geq\gamma x_{u^*}.$
Based on (\ref{align.18}), we obtain $(\rho^*-s_1+1)x_v\leq(\rho^*-s_1+1)x_{v^*}\leq\gamma x_{u^*}.$
Hence,
$\frac{\gamma}{\rho^*}x_{u^*}\leq x_v\leq\frac{\gamma}{\rho^*-s_1+1}x_{u^*}.$
For any $u,v\in V(G^*)\setminus L$ with $d_{G^*}(u)<d_{G^*}(v),$ we can further deduce that
\begin{eqnarray}\label{align.19}
x_u<x_v.
\end{eqnarray}

\begin{lem}\label{lem4.2}
Assume that $s_1\neq3$ and that $G$ is a connected
$K_{s_1,1}$-minor-free graph.
If $G$ is $(s_1-1)$-regular, then either $G\cong K_{s_1}$, or
$G\cong\overline{H_{s_1,1}}$ only for odd $s_1$.
\end{lem}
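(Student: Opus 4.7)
The plan is to first bound $|G|$ from above using the edge-count of a $K_{1,s_1}$-minor free connected graph (Lemma \ref{lem3.1}), then identify the only two possibilities compatible with $(s_1-1)$-regularity.

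First, since every vertex of $G$ has $s_1-1$ distinct neighbors, we have $|G|\ge s_1$. If $|G|=s_1$, then any $(s_1-1)$-regular graph on $s_1$ vertices is clearly $K_{s_1}$, and $K_{s_1}$ is trivially $K_{1,s_1}$-minor free (it has too few vertices). So assume $|G|\ge s_1+1$. Suppose for contradiction that $|G|\ge s_1+2$. Then by Lemma \ref{lem3.1},
\begin{eqnarray*}
\frac{(s_1-1)|G|}{2}=e(G)\le\binom{s_1}{2}+|G|-s_1,
\end{eqnarray*}
which rearranges to $(s_1-3)|G|\le s_1(s_1-3)$. Since $s_1\ge 4$, this forces $|G|\le s_1$, a contradiction. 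Hence $|G|=s_1+1$.

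Next I would analyze the case $|G|=s_1+1$ via the complement. If $G$ is $(s_1-1)$-regular on $s_1+1$ vertices, then $\overline{G}$ is $1$-regular on $s_1+1$ vertices, i.e. $\overline{G}$ is a perfect matching on $s_1+1$ vertices. This forces $s_1+1$ to be even, hence $s_1$ is odd. For odd $s_1$, the definition gives $H_{s_1,1}\cong\frac{s_1+1}{2}K_{1,1}$, which is precisely a perfect matching on $s_1+1$ vertices, so $\overline{G}\cong H_{s_1,1}$ and consequently $G\cong\overline{H_{s_1,1}}$. For even $s_1$, the parity obstruction $(s_1-1)(s_1+1)$ odd rules out a $(s_1-1)$-regular graph on $s_1+1$ vertices altogether, so the case $|G|=s_1+1$ contributes nothing when $s_1$ is even.

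Finally, it remains to verify that $\overline{H_{s_1,1}}$ (for odd $s_1$) is actually $K_{1,s_1}$-minor free, to confirm it satisfies the hypotheses. Since $|\overline{H_{s_1,1}}|=s_1+1=|K_{1,s_1}|$, any model of $K_{1,s_1}$ in $\overline{H_{s_1,1}}$ must use each branch set as a single vertex, so a $K_{1,s_1}$ minor would require an actual $K_{1,s_1}$ subgraph, hence a vertex of degree at least $s_1$. But $\Delta(\overline{H_{s_1,1}})=s_1-1$, ruling this out. The same argument applies to $K_{s_1}$.

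The main obstacle here is essentially bookkeeping: the Thomason-type bound in Lemma \ref{lem3.1} combined with regularity is what immediately trivializes the problem, so the substantive content is merely the parity analysis separating the two extremal shapes. No major technical difficulty is expected.
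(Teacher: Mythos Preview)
Your proof is correct and follows essentially the same approach as the paper: use Lemma~\ref{lem3.1} to show $|G|\le s_1+1$ when $s_1\ge4$, then analyze the two remaining orders via the complement. The only cosmetic difference is that the paper verifies $K_{s_1}$ and $\overline{H_{s_1,1}}$ are $K_{1,s_1}$-minor free at the start rather than the end, and states the inequality $e(G)<\frac12(s_1-1)|G|$ without writing out the rearrangement you made explicit.
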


\begin{proof}
We shall first note that both $K_{s_1}$ and $\overline{H_{s_1,1}}$
are $K_{s_1,1}$-minor-free.
Indeed, recall that $H_{s_1,1}\cong\frac{s_1+1}{2}K_{1,1}$ for odd $s_1$
and $H_{s_1,1}\cong\frac{s_1-2}{2}K_{1,1}\cup K_{2,1}$ for even $s_1$.
Thus, $|H_{s_1,1}|=s_1+1$ and $\Delta(\overline{H_{s_1,1}})=s_1-1$.
Therefore, $\overline{H_{s_1,1}}$
is $K_{s_1,1}$-minor-free,
and so is $K_{s_1}$.

Note that $G$ is connected and $(s_1-1)$-regular.
If $s_1\in\{1,2\}$ or $|G|=s_1$, then $G\cong K_{s_1}$.
If $|G|=s_1+1$, then $G$ can only be obtained from $K_{s_1+1}$ by
deleting a perfect matching,
which implies that $s_1$ is odd and $\overline{G}\cong\frac{s_1+1}2K_{1,1}\cong H_{s_1,1}.$
Consequently, $G\cong\overline{H_{s_1,1}}$.

Next, assume that $s_1\geq4$ and $|G|\geq s_1+2$.
Since $G$ is connected and $K_{s_1,1}$-minor-free,
by Lemma \ref{lem3.1} we have
$e(G)\leq{s_1\choose 2}+|G|-s_1$,
which implies that $e(G)<\frac12(s_1-1)|G|$.
Thus, $G$ cannot be $(s_1-1)$-regular,
a contradiction.
Hence, the lemma holds.
\end{proof}

From Lemmas \ref{lem4.1} and \ref{lem4.2},
we immediately obtain the following theorem.

\begin{thm}\label{thm4.3}
If $s_1$ is odd,
then $SPEX(n,\{B_{r-1,s_1}\}_{minor})\!=\!\{K_\gamma\nabla G^\blacktriangledown\}$,
where $\gamma\!=\!r-2$ and $G^\blacktriangledown$ takes over all 
$(n\!-\!\gamma)$-vertex $(s_1\!-\!1)$-regular $K_{s_1,1}$-minor-free graphs.
More precisely, every component of $G^\blacktriangledown$ is a cycle when $s_1=3$,
and is either $K_{s_1}$ or $\overline{H_{s_1,1}}$ otherwise.
\end{thm}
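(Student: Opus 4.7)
The plan is to derive Theorem \ref{thm4.3} from Lemmas \ref{lem4.1} and \ref{lem4.2} together with the framework in Section \ref{section3}. Let $G^*\in SPEX\big(n,\{B_{r-1,s_1}\}_{minor}\big)$. By Theorem \ref{thm1.1}, $G^*$ has a set $L$ of $\gamma=r-2$ dominating vertices, and Theorem \ref{thm1.2} together with (\ref{align.16}) forces $G^*-L$ to be $K_{1,s_1}$-minor free. Lemma \ref{lem4.1} then provides the upper bound (\ref{align.17}) on $\rho^*$, which is attained precisely when $G^*-L$ is $(s_1-1)$-regular. So the proof reduces to exhibiting a graph that attains this bound, and then classifying all $(n-\gamma)$-vertex $(s_1-1)$-regular $K_{1,s_1}$-minor free graphs.

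First, I would construct an explicit extremal candidate. For odd $s_1\geq5$, both $K_{s_1}$ and $\overline{H_{s_1,1}}\cong\overline{\frac{s_1+1}{2}K_{1,1}}$ are $(s_1-1)$-regular and $K_{1,s_1}$-minor free, as noted at the start of the proof of Lemma \ref{lem4.2}; they have orders $s_1$ and $s_1+1$ respectively. Since $\gcd(s_1,s_1+1)=1$, the Frobenius (Chicken--McNugget) theorem guarantees that, for $n$ sufficiently large, $n-\gamma$ admits a representation $as_1+b(s_1+1)$ with $a,b\geq 0$, so one may take $G^\blacktriangledown=aK_{s_1}\cup b\overline{H_{s_1,1}}$. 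For $s_1=3$, any disjoint union of cycles on $n-\gamma$ vertices works, since a $2$-regular graph has maximum degree $2$ and is therefore automatically $K_{1,3}$-minor free. In either case, Lemma \ref{lem3.6} together with (\ref{align.16}) shows that $K_\gamma\nabla G^\blacktriangledown$ is $B_{r-1,s_1}$-minor free, and since $G^\blacktriangledown$ is $(s_1-1)$-regular, this graph attains equality in (\ref{align.17}).

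Second, the existence of such a candidate together with Lemma \ref{lem4.1} forces every extremal $G^*$ to attain equality in (\ref{align.17}), so $G^*-L$ must itself be $(s_1-1)$-regular and $K_{1,s_1}$-minor free. To characterize its components, I apply Lemma \ref{lem4.2} componentwise when $s_1\geq5$: each connected component is a connected $(s_1-1)$-regular $K_{1,s_1}$-minor free graph, hence isomorphic to $K_{s_1}$ or $\overline{H_{s_1,1}}$. When $s_1=3$, each component is instead a connected $2$-regular graph, i.e., a cycle.

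The main potential hurdle is the existence step, namely confirming that for every sufficiently large $n$ there actually exists an $(n-\gamma)$-vertex $(s_1-1)$-regular $K_{1,s_1}$-minor free graph; without this, the upper bound from Lemma \ref{lem4.1} would only be formal and might not be tight. For $s_1=3$ existence is immediate (any partition of $n-\gamma$ into integers $\geq 3$ yields a union of cycles), and for $s_1\geq5$ it is a standard numerical-semigroup observation exploiting $\gcd(s_1,s_1+1)=1$. Once existence is secured, the rest is a routine amalgamation of Lemmas \ref{lem4.1} and \ref{lem4.2}.
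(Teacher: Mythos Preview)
Your proposal is correct and follows the same line as the paper, which simply states that the theorem ``follows immediately from Lemmas \ref{lem4.1} and \ref{lem4.2}.'' You spell out the existence step (that an $(n-\gamma)$-vertex $(s_1-1)$-regular $K_{1,s_1}$-minor free graph exists for large $n$) via the Frobenius coin argument, whereas the paper relegates this construction to the Remark following the theorem; otherwise the logic is identical.
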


\begin{remark}
By Theorem \ref{thm4.3},
$SPEX(n,\{B_{r-1,s_1}\}_{minor})$
is an infinite family for odd $s_1\geq3$.
Indeed, let $s_1\geq 5$ and $n-\gamma=ps_1+q$ \emph{($1\leq q\leq s_1$)}.
Then, $G^\blacktriangledown$ can be constructed as
the union of $p-c-(cs_1+q)$ copies of $K_{s_1}$
and $cs_1+q$ copies of $\overline{H_{s_1,1}}$,
where $c$ is an arbitrary non-negative integer.
\end{remark}

Let $\mathbb{G}_i$ denote the family of $i$-vertex components in $G^*-L$,
and let $|\mathbb{G}_i|$ represent the number of components in $\mathbb{G}_i$.
In the following, we characterize $G^*$ for even $s_1$.

\begin{lem}\label{lem4.3}
Let $s_1$ be even and $\mathbb{G}_i\neq\varnothing$. 
Then, for $s_1=4$, we have $s_1-1\leq i\leq s_1+3$, and for $s_1\geq4$, we have $s_1-1\leq i\leq s_1+1$.
\end{lem}

\begin{proof}
Recall that $G^*-L$ is $K_{s_1,1}$-minor-saturated. 
Assume that $s_1\geq4$. We first claim that $\mathbb{G}_i=\varnothing$ for each $i\geq3s_1$.
Indeed, if $G^*-L$ contains a component
$G_0$ with $|G_0|=as_1+b$, where $a\geq3$ and $0\leq b<s_1,$
then by Lemma \ref{lem3.1},
$e(G_0)\leq{s_1\choose2}+|G_0|-s_1<{s_1\choose2}+as_1$.
Now, let $G_0'=aK_{s_1}\cup bK_1$.
Then, $G_0'$ is $K_{s_1,1}$-minor-free and
$e(G_0')=a{s_1\choose2}$.
A straightforward calculation gives $e(G_0)<e(G_0')$
for $a\geq3$ and $s_1\geq4$,
which contradicts Theorem \ref{thm1.3}.

Secondly, we claim that $|\mathbb{G}_i|\leq s_1-1$ for
$i\neq s_1$.
Otherwise, there exists some
$i_1\neq s_1$ such that $|\mathbb{G}_{i_1}|\geq s_1$.
Let $G_1$ be the union of $s_1$ components in $\mathbb{G}_{i_1}$.
Then $\Delta(G_1)\leq s_1-1$, as $G_1$ is $K_{s_1,1}$-minor-free.
Moreover, $G_1$ is not $(s_1-1)$-regular
(otherwise, $G_1\cong s_1K_{s_1}$ by Lemma \ref{lem4.2}).
Thus,
$e(G_1)<\frac12(s_1-1)|G_1|=e(i_1K_{s_1}),$
which also contradicts Theorem \ref{thm1.3}.

The above two claims imply that
$\sum_{i\neq s_1}i|\mathbb{G}_i|$ is constant.
Hence, $s_1|\mathbb{G}_{s_1}|=n-\gamma-\sum_{i\neq s_1}i|\mathbb{G}_i|$
$\geq\frac{n}{2s_1}$.
For an arbitrary $i_2\in\{1,2,\ldots,3s_1-1\}\setminus\{s_1,s_1+1\}$,
we set $i_2=as_1+b$, where $0\leq a\leq 2$ and $0\leq b<s_1$.
If $\mathbb{G}_{i_2}\neq\varnothing$,
we choose a subgraph $G_2$ in $G^*-L$,
which consists of $b$ components from $\mathbb{G}_{s_1}$
and one component from $\mathbb{G}_{i_2}$.
For $a=0$, it is clear that $e(G_2)\leq b\cdot e(K_{s_1})+e(K_{b})$.
For $a\in\{1,2\}$, by Lemma \ref{lem3.1}, we get
$e(G_2)\leq b\cdot e(K_{s_1})+{s_1\choose 2}+(i_2-s_1)$.
Next, let $G_2'=aK_{s_1}\cup b\overline{H_{s_1,1}}.$
Then, $|G_2'|=|G_2|$ and $e(G_2')=a\cdot
e(K_{s_1})+b\cdot\frac12(s_1^2-2).$
A straightforward calculation shows that
$e(G_2)\le e(G_2'),$
with equality if and only if $i_2=s_1-1$ or $i_2\in\{s_1+2,s_1+3: ~s_1=4\}$.

Recall that
both $K_{s_1}$ and $\overline{H_{s_1,1}}$ are $K_{s_1,1}$-minor-free.
Therefore, $G_2'$ is also $K_{s_1,1}$-minor-free.
By Theorem \ref{thm1.3}, we have
$e(G_2)\geq e(G_2')$.
Hence, $e(G_2)=e(G_2')$.
Consequently,
$i_2=s_1-1$ or $i_2\in\{s_1+2,s_1+3: ~s_1=4\}$.
Given the choice of $i_2$, we have now completed the proof.
\end{proof}

\begin{lem}\label{lem4.4}
Let $s_1\geq4$ be even.
Then, $|\mathbb{G}_{s_1-1}|\leq 1$ and $|\mathbb{G}_{s_1+1}|\leq s_1-2$.
Specially, if $|\mathbb{G}_{s_1-1}|=1$, then
$\mathbb{G}_i=\varnothing$ for any $i\notin\{s_1-1,s\}$.
\end{lem}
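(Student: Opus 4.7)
The plan is to prove the three assertions of Lemma~\ref{lem4.4} by picking a small component subset $R\subseteq V(G^*)\setminus L$ and comparing $G^*[R]$ with an alternative $K_{1,s_1}$-minor-free graph on the same vertex set. Since $\Gamma(\mathbb{H})=\{K_{1,s_1}\}$ is a connected family, Theorem~\ref{thm1.3} gives $G^*-L\in EX(n-\gamma,\{K_{1,s_1}\}_{minor})$, and Lemma~\ref{lem3.7} forces $e(R)=ex(|R|,\{K_{1,s_1}\}_{minor})$. Whenever the alternative is strictly denser this equality is violated and the contradiction is immediate; when it achieves exactly the same edge count I will instead argue via the Rayleigh quotient using the Perron eigenvector $X^*$ of $G^*$.

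For $|\mathbb{G}_{s_1-1}|\le 1$ I would take $R$ to be the union of two size-$(s_1-1)$ components and compare against $K_{s_1}\cup K_{s_1-2}$ on $2(s_1-1)$ vertices; this alternative has maximum degree $s_1-1$ (so is $K_{1,s_1}$-minor-free) and carries exactly $2\binom{s_1-1}{2}+1$ edges, beating the trivial bound on $e(R)$. The "moreover" clause is handled similarly: with $|\mathbb{G}_{s_1-1}|=1$ and a second component of size $s_1+k$ (with $k\ge 1$ in general, and $k\in\{1,2,3\}$ possible only for $s_1=4$ by Lemma~\ref{lem4.3}), pairing it with the $K_{s_1-1}$-type component produces an $R$ for which a denser replacement is available: $2K_{s_1}$ when $k=1$, and the two exotic options $K_4\cup\overline{H_{4,1}}$ (on $9$ vertices) or $2\overline{H_{4,1}}$ (on $10$ vertices) in the sporadic $s_1=4$ cases. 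In every instance a short count produces a strictly positive edge surplus, and Lemma~\ref{lem3.7} delivers the contradiction.

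For the second bound $|\mathbb{G}_{s_1+1}|\le s_1-2$, assume $|\mathbb{G}_{s_1+1}|\ge s_1-1$ and split into two subcases. If $|\mathbb{G}_{s_1+1}|\ge s_1$, letting $R$ consist of $s_1$ copies of $\overline{H_{s_1,1}}$, the alternative $(s_1+1)K_{s_1}$ on the same $s_1(s_1+1)$ vertices is $K_{1,s_1}$-minor-free and carries exactly $\frac{s_1}{2}$ more edges than $s_1\overline{H_{s_1,1}}$, again contradicting Lemma~\ref{lem3.7}. The delicate subcase is $|\mathbb{G}_{s_1+1}|=s_1-1$: on $(s_1-1)(s_1+1)$ vertices the candidate replacement $(s_1-1)K_{s_1}\cup K_{s_1-1}$ is $K_{1,s_1}$-minor-free with the same edge count $(s_1-1)(s_1^2-2)/2$ as $(s_1-1)\overline{H_{s_1,1}}$, so edge counting alone cannot conclude. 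This is the main obstacle of the proof.

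To break the tie I pass to a Rayleigh-quotient comparison. Setting $R=V\bigl((s_1-1)\overline{H_{s_1,1}}\bigr)$, define $G^{**}$ by replacing $G^*[R]$ via grouping the $s_1-1$ centers $c_j$ (the unique degree-$(s_1-2)$ vertex in each copy) into a single $K_{s_1-1}$ and completing each copy's other $s_1$ vertices into a $K_{s_1}$. By the internal symmetry of $\overline{H_{s_1,1}}$, the pair-type vertices of the $s_1-1$ copies share a common Perron weight $x_a^*$ and the centers share a common weight $x_c^*$. Only three families of edges change, and a careful bookkeeping produces the perfect-square identity
\begin{align*}
\sum_{uv\in E(G^{**})\setminus E(G^*[R])}\!\!\! x_u^*x_v^*\;-\;\sum_{uv\in E(G^*[R])\setminus E(G^{**})}\!\!\! x_u^*x_v^*
\;=\;\tfrac{(s_1-1)(s_1-2)}{2}\bigl(x_a^*-x_c^*\bigr)^2.
\end{align*}
Since $c_j$ has strictly smaller $G^*$-degree than the pair-type vertices, inequality~(\ref{align.19}) forces $x_c^*<x_a^*$, so the right-hand side is strictly positive. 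The Rayleigh principle then gives $\rho(G^{**})>\rho^*$, contradicting the extremality of $G^*$. The heart of the proof is this perfect-square identity: it exploits the symmetric way in which the center-to-pair edges of $(s_1-1)\overline{H_{s_1,1}}$ redistribute into intra-$K_{s_1-1}$ and intra-$K_{s_1}$ edges, converting an apparent edge-count coincidence into a second-order spectral gain.
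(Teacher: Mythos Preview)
Your proof is correct and follows essentially the same strategy as the paper: edge-counting contradictions via Theorem~\ref{thm1.3}/Lemma~\ref{lem3.7} for the first assertions, and the Rayleigh-quotient perfect-square identity $(s_1-1)(s_1-2)(x_a^*-x_c^*)^2$ for the delicate bound $|\mathbb{G}_{s_1+1}|\le s_1-2$. Two minor organizational differences are worth noting. First, the paper handles $|\mathbb{G}_{s_1-1}|\le 1$ and the ``moreover'' clause in a single stroke: given $G_0\cong K_{s_1-1}$ and any other component $G_1$ with $|G_1|\ne s_1$, it picks a vertex $v\in V(G_1)$ of degree $\le s_1-2$ (such a vertex exists by Lemma~\ref{lem4.2} since $G_1$ is not $(s_1-1)$-regular) and replaces $G_0\cup G_1$ by $K_{s_1}\cup(G_1-\{v\})$, gaining at least one edge; this avoids your case analysis on $k$. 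Second, your separate treatment of $|\mathbb{G}_{s_1+1}|\ge s_1$ is unnecessary---the paper simply assumes $|\mathbb{G}_{s_1+1}|\ge s_1-1$ and runs the Rayleigh argument on $s_1-1$ copies, which already yields the contradiction. One small point you leave implicit but should state: each component in $\mathbb{G}_{s_1+1}$ is \emph{exactly} $\overline{H_{s_1,1}}$ (this follows from Theorem~\ref{thm1.3} together with the observation that on $s_1+1$ vertices, $K_{1,s_1}$-minor-free is equivalent to $\Delta\le s_1-1$), since the symmetry claim underpinning your identity depends on this structure.
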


\begin{proof}
We first assume that there exists a component $G_0\in \mathbb{G}_{s_1-1}$.
Since $G^*-L$ is $K_{s_1,1}$-minor-saturated,
we have $G_0\cong K_{s_1-1}$.
Choose an arbitrary component $G_1$ in $G^*-L$ other than $G_0$.
We now claim that $|G_1|=s_1$.
Indeed, suppose for contradiction that $|G_1|\neq s_1$.
Then, by Lemma \ref{lem4.2}, $G_1$ is not $(s_1-1)$-regular,
and thus there exists $v\in V(G_1)$ with $d_{G_1}(v)\leq s_1-2$.
Let $G$ be the graph obtained from $G^*$ by replacing $G_0\cup G_1$ with
$K_{s_1}\cup (G_1-\{v\})$.
Note that $e(K_{s_1})-e(G_0)=s_1-1$, but $e(G_1)-e(G_1-\{v\})\leq s_1-2.$
Hence, $e(G)>e(G^*)$, which contradicts Theorem \ref{thm1.3}.
Therefore, $|G_1|=s_1$, as claimed.
Now, we conclude that if there exists a component $G_0\in \mathbb{G}_{s_1-1}$,
then every other component in $G^*-L$ must belong to $\mathbb{G}_{s_1}$.
This implies that
$|\mathbb{G}_{s_1-1}|\leq 1$.
Furthermore, if $|\mathbb{G}_{s_1-1}|=1$, then
$\mathbb{G}_i=\varnothing$ for any $i\notin\{s_1-1,s\}$.

It remains to show that $|\mathbb{G}_{s_1+1}|\leq s_1-2$.
Suppose to the contrary that 
$|\mathbb{G}_{s_1+1}|\geq s_1-1$, and
let $G_2$ be a component in $\mathbb{G}_{s_1+1}.$
By Theorem \ref{thm1.3}, $e(G_2)=ex(s_1+1,\{K_{s_1,1}\}_{minor})$.
Thus, $G_2$ can only be the complement of $\frac{s_1-2}2K_{1,1}\cup K_{1,2}$,
i.e., $G_2\cong\overline{H_{s_1,1}}$.
Now, let $G$ be the graph obtained from $G^*$ by replacing
$(s_1-1)G_2$ with
$(s_1-1)K_{s_1}\cup K_{s_1-1}$.
By Lemma \ref{lem3.6},
$G$ is also $B_{r-1,s_1}$-minor-free.

Assume now that $V(G_2)=
\{v_0,v_1,\ldots,v_{s_1}\}$,
where the set of non-edges in $G_2$ is formed by $\{v_0v_1,v_0v_2\}\cup\{v_3v_4,v_5v_6,\ldots,v_{s_1-1}v_{s_1}\}$.
By symmetry, we have $x_{v_i}=x_{v_3}$ for each $i\in\{3,\ldots,s_1\}$.
Furthermore, since $d_{G_2}(v_0)=s_1-2$ and $d_{G_2}(v_3)=s_1-1$,
it follows that $x_{v_0}<x_{v_3}$ by (\ref{align.19}).

Observe that $(s_1-1)K_{s_1}\cup K_{s_1-1}$
can be obtained from $(s_1-1)G_2$
by replacing the edge set $\{v_0v_i:~ i=3,\ldots,s_1\}$ with $\{v_3v_4,v_5v_6,\ldots,v_{s_1-1}v_{s_1}\}$ in every copy of $G_2$
and then forming $s_1-1$ copies of $v_0$ into a copy of $K_{s_1-1}$.
Thus, $e(G)=e(G^*)$, and we can deduce that
\begin{eqnarray*}
\sum_{uv\in E(G)}\!\!\!\!2x_ux_v-\!\!\!\!\sum_{uv\in E(G^*)}\!\!\!\!2x_ux_v
\!\!\!\!\!&=&\!\!\!\!\! 2e(K_{s_1-1})x_{v_0}^2+2(s-1)\big(\sum_{i=2}^{s_1/2}x_{v_{2i-1}}x_{v_{2i}}-
\sum_{i=3}^{s_1}x_{v_0}x_{v_i}\big)\\
\!\!\!\!\!&=&\!\!\!\!\! (s_1-1)(s_1-2)(x^2_{v_0}+x^2_{v_3}-2 x_{v_0}x_{v_3}).
\end{eqnarray*}
Since $x_{v_0}<x_{v_3},$
we have $\rho(G)>\rho^*$, a contradiction.
Therefore, $|\mathbb{G}_{s_1+1}|\leq s_1-2$.
\end{proof}

\begin{lem}\label{lem4.5}
If $s_1=4$,
then we have $\mathbb{G}_{s_1+3}=\varnothing$,
and $\sum_{i\in\{-1,1,2\}}|\mathbb{G}_{s_1+i}|\leq1$.
\end{lem}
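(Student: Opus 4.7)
The plan is to treat the two assertions of the lemma separately, in each case deriving a contradiction from a local swap inside $G^{*}-L$. In every swap, Lemma \ref{lem3.6} combined with the fact that $K_3$, $K_4$, $\overline{H_{4,1}}$ and $S^{2}(K_4)$ are all $K_{1,4}$-minor free guarantees that the modified graph $G'$ is still $\{B_{r-1,4}\}$-minor free. Each swap will then yield one of two contradictions: either $e(G'-L)>e(G^{*}-L)$, contradicting the edge-extremality of $G^{*}-L$ from Theorem \ref{thm1.3}, or the Rayleigh quotient inequality applied to the unit eigenvector $X^{*}$ forces $\rho(G')>\rho^{*}$, contradicting $G^{*}\in SPEX\big(n,\{B_{r-1,4}\}_{minor}\big)$.

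To show $\mathbb{G}_{7}=\varnothing$, suppose $G_0\in\mathbb{G}_7$. By Theorem \ref{thm1.3} and Lemma \ref{lem3.2}, $G_0\cong S^{3}(K_4)$; write $V(G_0)=\{v_1,v_2,v_3,v_4\}\cup\{w_1,w_2,w_3\}$ so that $\{v_1,\dots,v_4\}$ are the original $K_4$-vertices and $v_3w_1w_2w_3v_4$ is the subdividing path. Replace $G_0$ by the disjoint union of $K_4$ on $\{v_1,\dots,v_4\}$ and $K_3$ on $\{w_1,w_2,w_3\}$; both have $9$ edges, the new edges are $\{v_3v_4,w_1w_3\}$ and the removed edges are $\{v_3w_1,w_3v_4\}$. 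The automorphism of $G^{*}$ swapping $v_3\leftrightarrow v_4$ with $w_1\leftrightarrow w_3$ gives $x_{v_3}=x_{v_4}=:x_\beta$ and $x_{w_1}=x_{w_3}=:x_\omega$, while $d_{G^{*}}(v_3)=\gamma+3>\gamma+2=d_{G^{*}}(w_1)$ forces $x_\beta>x_\omega$ by (\ref{align.19}). Hence the Rayleigh difference collapses to $2(x_\beta-x_\omega)^{2}>0$, a contradiction.

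For the bound $|\mathbb{G}_3|+|\mathbb{G}_5|+|\mathbb{G}_6|\le 1$, Lemma \ref{lem4.4} reduces the problem to the case $|\mathbb{G}_3|=0$, in which three sub-cases must be ruled out. If $|\mathbb{G}_6|\ge 2$, I replace two copies of $S^{2}(K_4)$ ($16$ edges on $12$ vertices) by $3K_4$ ($18$ edges); this strictly increases $e(G^{*}-L)$, contradicting Theorem \ref{thm1.3}. If $|\mathbb{G}_5|\ge 2$, I replace two copies of $\overline{H_{4,1}}$ on $\{a_1,\dots,a_4,w\}\cup\{a'_1,\dots,a'_4,w'\}$ by $K_4$ on $\{a_1,\dots,a_4\}$ disjoint from $S^{2}(K_4)$ on $\{a'_1,\dots,a'_4,w,w'\}$ (subdividing $a'_3a'_4$ through the path $a'_3ww'a'_4$); the added edges are $\{a_3a_4,a'_3w,ww',w'a'_4\}$ and the removed are $\{a_3w,a_4w,a'_3w',a'_4w'\}$, and with the common orbit values $x_\beta^{H}$ for $\{a_3,a_4,a'_3,a'_4\}$ and $x_\omega^{H}$ for $\{w,w'\}$, the Rayleigh difference telescopes to $2(x_\beta^{H}-x_\omega^{H})^{2}>0$, contradiction once again via (\ref{align.19}).

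The remaining sub-case $|\mathbb{G}_5|=|\mathbb{G}_6|=1$ is the main obstacle. Label $\overline{H_{4,1}}=\{a_1,\dots,a_4,w\}$ and $S^{2}(K_4)=\{b_1,\dots,b_4,w_1,w_2\}$, and swap their union to $K_4$ on $\{a_i\}$, $K_4$ on $\{b_i\}$ and $K_3$ on $\{w,w_1,w_2\}$: the added edges are $\{a_3a_4,b_3b_4,ww_1,ww_2\}$ and the removed are $\{a_3w,a_4w,b_3w_1,w_2b_4\}$. Writing $x_\beta^{H/S}$ and $x_\omega^{H/S}$ for the orbit values of the $\beta$-type and $\omega$-type vertices in each component, algebraic rearrangement produces
\[
\rho(G')-\rho^{*}\ge 2\bigl[(x_\beta^{H}-x_\omega^{H})^{2}+(x_\beta^{S}-x_\omega^{S})^{2}-(x_\omega^{H}-x_\omega^{S})^{2}\bigr],
\]
and the delicate point is that this is not obviously positive from (\ref{align.19}) alone, since all three squared differences are small. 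To handle this, I solve the equitable-partition eigenvector equations for each component to obtain the closed forms $x_\omega^{H}=(y_0+2x_\beta^{H})/\rho^{*}$ and $x_\omega^{S}=(y_0+x_\beta^{S})/(\rho^{*}-1)$, where $y_0=\gamma x_L$, together with the analogous formulas for $x_\alpha^{H/S}$ and $x_\beta^{H/S}$. Since $\rho^{*}\ge\sqrt{\gamma(n-\gamma)}$ is large by Lemma \ref{lem2.4}, an asymptotic expansion in $1/\rho^{*}$ yields $x_\beta^{H/S}-x_\omega^{H/S}=\Theta(y_0/(\rho^{*})^{2})$ while $x_\omega^{H}-x_\omega^{S}=O(y_0/(\rho^{*})^{3})$, so the two positive squared differences dominate of order $y_0^{2}/(\rho^{*})^{4}$ and the displayed quantity is strictly positive. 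This final contradiction completes the proof.
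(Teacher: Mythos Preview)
Your proof is correct and follows essentially the same strategy as the paper: the same case split via Lemma~\ref{lem4.4}, the same local swaps (your $\overline{H_{4,1}}$ is the paper's $S^1(K_4)$, and in each case the resulting graph is identical), and the same contradictions from Theorem~\ref{thm1.3} or from the Rayleigh quotient together with \eqref{align.19}.

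The only genuine difference is in the final sub-case $|\mathbb{G}_5|=|\mathbb{G}_6|=1$. After the common swap $\overline{H_{4,1}}\cup S^2(K_4)\to 2K_4\cup K_3$, you factor the Rayleigh increment as
\[
2\bigl[(x_\beta^H-x_\omega^H)^2+(x_\beta^S-x_\omega^S)^2-(x_\omega^H-x_\omega^S)^2\bigr],
\]
and then need a second-order eigenvector expansion to see that the subtracted square is of order $y_0^2/(\rho^*)^6$ while the two positive squares are of order $y_0^2/(\rho^*)^4$. This computation is correct (the $1+2/\rho^*$ terms of $x_\omega^H$ and $x_\omega^S$ do cancel), but the paper sidesteps it entirely by regrouping the same expression as
\[
2(x_\beta^H-x_\beta^S)^2+4(x_\beta^H-x_\omega^S)(x_\beta^S-x_\omega^H),
\]
which is manifestly positive from \eqref{align.19} alone, since every degree-$3$ entry strictly exceeds every degree-$2$ entry. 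The paper's rearrangement thus buys a one-line finish; yours works but trades that for an asymptotic calculation that must be carried out carefully to justify the $\Theta$ and $O$ claims.
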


\begin{proof}
Let $G_0$ be an arbitrary component in $G^*-L$.
Then, by Lemma \ref{lem4.3}, we have $|G_0|\leq s_1+3$,
and by Theorem \ref{thm1.3},
$e(G_0)=ex(|G_0|,\{K_{s_1,1}\}_{minor})$.
Furthermore,
by Lemma \ref{lem3.2},
every member in $\mathbb{G}_{s_1+i}$
is isomorphic to $S^i(K_{s_1})$ for $i\in\{1,2,3\}$.

We first show that $\mathbb{G}_{s_1+3}=\varnothing.$
Suppose, for contradiction, that there exists a component $G_0\in\mathbb{G}_{s_1+3}$.
Then, $G_0\cong S^3(K_4)$,
meaning that $G_0$ is obtained from $K_4$
by replacing an edge $v_1v_2$ with a path $v_1w_1w_2w_3v_2$.
Now, let $G_1=G_0-\{v_1w_1,v_2w_3\}+\{v_1v_2,w_1w_3\}.$
Then, $G_1\cong K_4\cup K_3$, and clearly $G_1$ is $K_{1,4}$-minor-free.
Define $G$ to be the graph obtained from $G^*$ by replacing $G_0$ with $G_1$.
By Lemma \ref{lem3.6}, $G$ is $B_{r-1,s_1}$-minor-free. Moreover, we have 
$$\rho(G)-\rho^*\geq
2(x_{v_1}x_{v_2}+x_{w_1}x_{w_3})-2(x_{v_1}x_{w_1}+x_{v_2}x_{w_3}).$$
By symmetry, $x_{v_1}=x_{v_2}$ and $x_{w_1}=x_{w_3}$.
Consequently, $\rho(G)-\rho^*\geq2(x^2_{v_1}+x^2_{w_1}-2x_{v_1}x_{w_1})$.
Note that $d_{G_0}(w_1)=2$ and $d_{G_0}(v_1)=3$.
By (\ref{align.19}),
we obtain
$x_{w_1}<x_{v_1},$
and hence $\rho(G)>\rho^*$, which leads to a contradiction.
Thus, $\mathbb{G}_{s_1+3}=\varnothing,$ as desired.

Secondly,
we claim that
$|\mathbb{G}_{s_1+i}|\leq1$ for $i\in\{1,2\}$.
Indeed, if $|\mathbb{G}_{s_1+2}|\geq2$, we replace
two copies of $S^2(K_4)$ in $\mathbb{G}_{s_1+2}$ with three copies of $K_4$.
Since $2e(S^2(K_4))=16<3e(K_4)$,
this results in a contradiction by Theorem \ref{thm1.3}.
Now, if $|\mathbb{G}_{s_1+1}|\geq2$,
we choose $G_0,G_1\in\mathbb{G}_{s_1+1}$.
For $j\in\{0,1\}$, $G_j\cong S^1(K_4)$, meaning $G_j$ is obtained from $K_4$
by replacing an edge $u_jw_j$ with a path $u_jv_jw_j$.
Now, define $G_2=(G_0\cup G_1)-\{u_1v_1,v_1w_1,u_0v_0\}+\{u_1w_1,u_0v_1,v_1v_0\}.$
Clearly, $G_2\cong K_4\cup S^2(K_4)$, and hence, $G_2$ is $K_{1,4}$-minor-free.
Let $G$ be the graph obtained from $G^*$ by replacing $G_0\cup G_1$ with $G_2$.
By Lemma \ref{lem3.6}, $G$ is $B_{r-1,s_1}$-minor-free, and we have
$$\rho(G)-\rho^*\geq
2(x_{u_1}x_{w_1}+x_{u_0}x_{v_1}+x_{v_1}x_{v_0})
-2(x_{u_1}x_{v_1}+x_{v_1}x_{w_1}+x_{u_0}x_{v_0}).$$
By symmetry, we know that $x_{v_0}=x_{v_1}$ and $x_{u_0}=x_{u_1}=x_{w_0}=x_{w_1}$.
Thus, $\rho(G)-\rho^*\geq2(x^2_{u_0}+x^2_{v_0}-2x_{u_0}x_{v_0})$.
Since $d_{G_0}(v_0)=2$ and $d_{G_0}(u_0)=3$,
by inequality (\ref{align.19}), we have $x_{v_0}<x_{u_0}$,
and hence $\rho(G)>\rho^*$, a contradiction.
Therefore, the claim holds.

Now, we are ready to prove that
$\sum_{i\in\{-1,1,2\}}|\mathbb{G}_{s_1+i}|\leq1$.
If $\mathbb{G}_{s_1-1}\neq\varnothing$, then by Lemma \ref{lem4.4}, we are done. 
Next, assume that $\mathbb{G}_{s_1-1}=\varnothing$.
It suffices to show that
$\sum_{i=1}^2|\mathbb{G}_{s_1+i}|\leq1$.
Suppose, for contradiction, that this is not the case.
By the above claim,
there must simultaneously exist $G_1\in \mathbb{G}_{s_1+1}$ and
$G_2\in \mathbb{G}_{s_1+2}$, where
$G_i\cong S^i(K_4)$ for $i\in\{1,2\}$.
Let $$G_3=(G_1\cup G_2)-\{u_1u_2,u_2u_3,v_1v_2,v_3v_4\}
+\{u_1u_3,v_1v_4,u_2v_2,u_2v_3\},$$
where $u_1u_2u_3$ is the induced path of length two in $G_1$
and $v_1v_2v_3v_4$ is the induced path of length three in $G_2$.
Then, $G_3\cong 2K_4\cup K_3$, and clearly, $G_3$ is $K_{1,4}$-minor-free.
Define $G$ to be the graph obtained from $G^*$ by replacing $G_1\cup G_2$ with $G_3$.
Then, $G$ is $B_{r-1,s_1}$-minor-free.
By symmetry, we have $x_{u_1}=x_{u_3}$, $x_{v_1}=x_{v_4}$ and $x_{v_2}=x_{v_3}$.
Thus, we can deduce that
\begin{eqnarray*}
\rho(G)-\rho^*
\!\!\!\!&\geq&\!\!\!\!
2(x_{u_1}x_{u_3}+x_{v_1}x_{v_4}+x_{u_2}x_{v_2}+x_{u_2}x_{v_3})
-2(x_{u_1}x_{u_2}+x_{u_2}x_{u_3}+x_{v_1}x_{v_2}+x_{v_3}x_{v_4})\\
\!\!\!\!&=&\!\!\!\! 2(x^2_{u_1}+x^2_{v_1}+2x_{u_2}x_{v_2})
-2(2x_{u_1}x_{u_2}+2x_{v_1}x_{v_2})\\
\!\!\!\!&=&\!\!\!\! 2(x_{u_1}-x_{v_1})^2+4(x_{u_1}-x_{v_2})(x_{v_1}-x_{u_2}).
\end{eqnarray*}
Note that $d_{G_1}(u_2)=d_{G_2}(v_2)=2$
and $d_{G_1}(u_1)=d_{G_2}(v_1)=3$.
By (\ref{align.19}), we obtain
$\max\{x_{u_2},x_{v_2}\}<\min\{x_{u_1},x_{v_1}\}$.
Hence, $\rho(G)>\rho^*$, a contradiction.
Therefore, $\sum_{i\in\{-1,1,2\}}|\mathbb{G}_{s_1+i}|\leq1$.
\end{proof}

We now determine $SPEX\big(n,\{B_{r-1,s_1}\}_{minor}\big)$ for even $s_1$.

\begin{thm}\label{thm4.4}
Let $n-\gamma=ps_1+q$, where $\gamma=r-2$ and $1\leq q\leq s_1$.
If $s_1$ is even, then we have $SPEX(n,\{B_{r-1,s_1}\}_{minor})
=\{K_\gamma\nabla G^\vartriangle\}$, where
$$G^\vartriangle=\left\{
\begin{aligned}
   &(p-1)K_{s_1} \cup S(\overline{H_{s_1,1}}) && ~\hbox{if}~ (q,s_1)=(2,4);\\
   &(p-q)K_{s_1} \cup q\overline{H_{s_1,1}} && ~\hbox{if}~ q\leq s_1-2 ~\hbox{and} ~ (q,s_1)\neq(2,4);\\
   &pK_{s_1}\cup K_q && \hbox{if}~ q>s_1-2.
\end{aligned}
\right.
$$
\end{thm}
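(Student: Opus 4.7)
The plan is to combine the reduction framework of Section 3 with the fine structural lemmas just established. First I would invoke Theorem \ref{thm1.1} to obtain a set $L$ of exactly $\gamma=r-2$ dominating vertices in $G^*$. Applying Theorem \ref{thm1.2} together with the identity $\Gamma^*_{s_1+1}(B_{r-1,s_1})=\{K_{1,s_1}\}$ from (\ref{align.16}) shows that $G^*-L$ is $K_{1,s_1}$-minor saturated, and since $K_{1,s_1}$ is a single connected graph, Theorem \ref{thm1.3} upgrades this to $G^*-L\in EX(n-\gamma,\{K_{1,s_1}\}_{minor})$. In particular, each component of $G^*-L$ is a connected $K_{1,s_1}$-minor free graph whose edge count attains the bound of Lemma \ref{lem3.1}.

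Next I would combine Lemmas \ref{lem4.3}, \ref{lem4.4}, and \ref{lem4.5} to restrict component orders to $\{s_1-1,s_1,s_1+1\}$ when $s_1\geq 6$ and to $\{3,4,5,6\}$ when $s_1=4$, subject to $|\mathbb{G}_{s_1-1}|\leq 1$, $|\mathbb{G}_{s_1+1}|\leq s_1-2$, and, in the case $s_1=4$, the sharper $\sum_{i\in\{-1,1,2\}}|\mathbb{G}_{s_1+i}|\leq 1$. Invoking $K_{1,s_1}$-minor saturation, the equality case of Lemma \ref{lem3.1}, and (for $s_1=4$) Lemma \ref{lem3.2}, each allowed component is forced to be isomorphic to $K_{s_1-1}$, $K_{s_1}$, $\overline{H_{s_1,1}}$, or (only for $s_1=4$) $S(\overline{H_{4,1}})\cong S^2(K_4)$, according to its order.

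With every component identified up to isomorphism, I would then do a case analysis on $q$ in $n-\gamma=ps_1+q$ with $1\leq q\leq s_1$. If $q>s_1-2$, so $q\in\{s_1-1,s_1\}$, the multiplicity bounds together with a vertex count force $G^*-L\cong pK_{s_1}\cup K_q$. If $q\leq s_1-2$ and $(q,s_1)\neq(2,4)$, the only way to realize the vertex count subject to the multiplicity constraints and the list of admissible components is $G^*-L\cong (p-q)K_{s_1}\cup q\overline{H_{s_1,1}}$, recovering the second branch of $G^\vartriangle$.

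The main obstacle is the special case $(q,s_1)=(2,4)$, because the two candidates $(p-2)K_4\cup 2\overline{H_{4,1}}$ and $(p-1)K_4\cup S(\overline{H_{4,1}})$ share the same vertex count and the same edge count, so Theorem \ref{thm1.3} alone does not separate them. Here I would run an edge-rearrangement argument in the spirit of Lemma \ref{lem4.5}: starting from a putative extremal graph containing two disjoint $\overline{H_{4,1}}$ components, I would use the symmetry of $X^*$ on each such component together with the strict inequality (\ref{align.19}) at vertices of unequal degree in $V(G^*)\setminus L$ to exhibit a local swap that merges the two $\overline{H_{4,1}}$'s into a single $K_4\cup S(\overline{H_{4,1}})$ while strictly increasing $\rho(G^*)$, contradicting extremality. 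This eliminates the $2\overline{H_{4,1}}$ configuration and establishes $G^\vartriangle\cong(p-1)K_4\cup S(\overline{H_{4,1}})$, completing the proof.
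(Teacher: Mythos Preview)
Your outline matches the paper's proof almost exactly: Theorems \ref{thm1.1}--\ref{thm1.3} reduce to identifying the components of $G^*-L$, Lemmas \ref{lem4.3}--\ref{lem4.5} pin down the admissible component orders and multiplicities, and a residue computation in $q$ finishes. Two small points are worth noting.

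First, the ``main obstacle'' you describe at $(q,s_1)=(2,4)$ is not actually an obstacle. You already invoked Lemma \ref{lem4.5} with the bound $\sum_{i\in\{-1,1,2\}}|\mathbb{G}_{s_1+i}|\leq 1$; this immediately rules out the configuration $(p-2)K_4\cup 2\overline{H_{4,1}}$, since it would have $|\mathbb{G}_5|=2$. Once at most one component of $G^*-L$ has order different from $4$, the congruence $n-\gamma\equiv 2\pmod 4$ forces that exceptional component to have order $6$, hence to be $S^2(K_4)\cong S(\overline{H_{4,1}})$. The edge-swap you propose is precisely the argument already used inside the proof of Lemma \ref{lem4.5} to establish $|\mathbb{G}_{s_1+1}|\leq 1$, so repeating it here is redundant rather than new work.

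Second, your plan silently omits the case $s_1=2$. Lemmas \ref{lem4.3}--\ref{lem4.5} all assume $s_1\geq 4$, so for $s_1=2$ you need the direct observation (as in the paper) that a $K_{1,2}$-minor saturated graph is a perfect or near-perfect matching, giving $G^*-L\cong pK_2\cup K_q$ with $q\in\{1,2\}$.
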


\begin{proof}
Let $G^*\in SPEX(n,\{B_{r-1,s_1}\}_{minor})$.
By Theorems \ref{thm1.1} and \ref{thm1.2},
$G^*$ has a set $L$ of dominating vertices,
where $|L|=\gamma$, and $G^*-L$ is $K_{s_1,1}$-minor-saturated.
Let $G_0$ be a component in $G^*-L$.
By Theorem \ref{thm1.3}, $e(G_0)=ex(|G_0|,\{K_{s_1,1}\}_{minor})$.
Therefore, $G_0\cong K_{|G_0|}$ if $|G_0|\in\{s_1-1,s_1\}$, and
$G_0\cong \overline{H_{s_1,1}}$ if $|G_0|=s_1+1$.
By Lemma \ref{lem3.2}, if $|G_0|=s_1+2=6$, then
$G_0\cong S^2(K_4)\cong S(\overline{H_{s_1,1}})$.
In the following, we distinguish
the proof into three cases.

If $s_1=2$, then $q\in\{1,2\}=\{s_1-1,s_1\}$,
and $G^*-L$ is $K_{1,2}$-minor saturated.
It is easy to see that $G^*-L\cong pK_{s_1}\cup K_q$,
as desired.

If $s_1=4$, then $1\le q\leq4$, and by Lemmas \ref{lem4.3} and \ref{lem4.5},
we get $|G_0|\in \{s_1+i: -1\leq i\leq2\}$.
From Lemma \ref{lem4.5}, we also know that $\sum_{i\in\{-1,1,2\}}|\mathbb{G}_{s_1+i}|\leq1$.
Thus, if $q=1=s_1-3$, then $|G_0|\in\{s_1,s_1+1\}$, and
$G_0\in\{K_{s_1},\overline{H_{s_1,1}}\}$ as stated above,
which gives $G^*-L\cong(p-1)K_{s_1}\cup\overline{H_{s_1,1}}$.
If $q=2=s_1-2$, then $|G_0|\in\{s_1,s_1+2\}$, and
$G_0\in\{K_{s_1},S(\overline{H_{s_1,1}})\}$ as discussed above,
which implies that $G^*-L\cong (p-1)K_{s_1}\cup S(\overline{H_{s_1,1}})$.
If $q\in\{3,4\}=\{s_1-1,s_1\}$, then $|G_0|\in\{s_1-1,s_1\}$, and thus
$G_0\cong K_{|G_0|}$.
It follows that $G^*-L\cong pK_{s_1}\cup K_q$.

If $s_1\geq6$, then by Lemma \ref{lem4.3},
we know that $|G_0|\in\{s_1+i: -1\leq i\leq1\}$.
From Lemma \ref{lem4.4}, we further have $|\mathbb{G}_{s_1-1}|\leq 1$, $|\mathbb{G}_{s_1+1}|\leq s_1-2$,
and $\mathbb{G}_{s_1+1}=\varnothing$ provided that $|\mathbb{G}_{s_1-1}|=1$.
Thus, if $q\in\{s_1-1,s_1\}$,
then $|G_0|\in\{s_1-1,s_1\}$ and so $G_0\cong K_{|G_0|}$,
which implies that $G^*-L\cong pK_{s_1}\cup K_{q}$.
If $q\leq s_1-2$,
then $|G_0|\in\{s_1,s_1+1\}$, and thus
$G_0\in\{K_{s_1},\overline{H_{s_1,1}}\}$,
which implies that $G^*-L\cong (p-q)K_{s_1}\cup q\overline{H_{s_1,1}}$.
This completes the proof.
\end{proof}

We conclude this section with the proof of Theorem \ref{thm1.8}.

\begin{proof}
Let $H=K_{s_1,s_2,\ldots,s_r}$, where $s_1\geq s_2\geq\cdots\geq s_r\geq1$,
and $\gamma=\sum_{i=2}^rs_i-1\geq1$.
Let $\beta=\lfloor\frac{s_1+1}{s_2+1}\rfloor$.
If $s_2\geq2$,
then Theorem \ref{thm1.8} holds by Theorem \ref{thm4.2}.
If $s_2=1$ and $s_1$ is odd,
then Theorem \ref{thm1.8} holds by Theorem \ref{thm4.3}.
If $s_2=1$ and $s_1$ is even,
then $\beta=\lfloor\frac{s_1+1}{s_2+1}\rfloor=\frac{s_1}2$.
Now, $2(\beta-1)=s_1-2$, and the case $(q,\beta,s_1)=(2,1,8)$ never occurs.
Hence, $G^\vartriangle=G^\blacktriangle$,
and Theorem \ref{thm1.8} follows from Theorem \ref{thm4.4}.
This completes the proof of Theorem \ref{thm1.8}.
\end{proof}

\section{Proof of the stability theorem}\label{section5}

In this section, we prove Theorem \ref{thm2.1}.
Let $G$ be an $n$-vertex $\mathbb{H}$-minor free graph
with $\rho(G)\geq\sqrt{\gamma_{\mathbb{H}}(n-\gamma_{\mathbb{H}})}$,
and let $X$ be a non-negative eigenvector corresponding to $\rho(G)$.
Choose $u^*\in V(G)$ such that $x_{u^*}=\max_{u\in V(G)}x_u$,
To prove Theorem \ref{thm2.1},
we need to find a set $L$ of $\gamma_\mathbb{H}$ vertices in $G$
such that $x_u\ge\big(1-\frac{1}{2(10C_\mathbb{H})^2}\big)x_{u^*}$ and $d_G(u)\ge\big(1-\frac{1}{(10C_\mathbb{H})^2}\big)n$
for every $u\in L$.
To this end, we choose a subset of $V(G)$ with respect to a positive constant $\lambda$ as follows:
$$L^{\lambda}=\{u\in V(G):                                    ~x_u\ge (10C_\mathbb{H})^{-\lambda}x_{u^*}\}.$$

\begin{lem}\label{lem2.5}
$|L^{\lambda}|<(10C_\mathbb{H})^{\lambda-10}n$.
\end{lem}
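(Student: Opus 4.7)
The plan is to exploit two ingredients: (1) the lower bound on the degree of any vertex $u \in L^\lambda$ coming from the eigenvalue equation, and (2) the edge count bound given by Lemma \ref{lem2.2} applied to a minimal member of $\mathbb{H}$. Combining these with the assumed lower bound on $\rho(G)$ will give the desired estimate directly.

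First, I would write down the basic eigenvalue inequality. For any $u \in V(G)$,
\[
\rho(G)\,x_u \;=\; \sum_{v\in N_G(u)} x_v \;\leq\; d_G(u)\,x_{u^*}.
\]
Hence for every $u \in L^\lambda$,
\[
d_G(u) \;\geq\; \frac{\rho(G)\,x_u}{x_{u^*}} \;\geq\; \rho(G)\,(10C_\mathbb{H})^{-\lambda}.
\]
Summing this inequality over $u \in L^\lambda$ and comparing with the total degree yields
\[
|L^\lambda|\cdot \rho(G)\,(10C_\mathbb{H})^{-\lambda} \;\leq\; \sum_{u \in L^\lambda} d_G(u) \;\leq\; 2\,e(G).
\]

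Next I would bound $e(G)$. Since $G$ is $\mathbb{H}$-minor free, it is in particular $H^*$-minor free for the minimal graph $H^* \in \mathbb{H}$ that achieves $C_\mathbb{H} = C_{H^*}$. Applying Lemma \ref{lem2.2} gives $e(G) < C_{H^*}\,n = C_\mathbb{H}\,n$. Substituting this into the previous display produces
\[
|L^\lambda| \;<\; \frac{2\,C_\mathbb{H}\,n\,(10C_\mathbb{H})^\lambda}{\rho(G)}.
\]

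Finally, the hypothesis $\rho(G) \geq \sqrt{\gamma_\mathbb{H}(n-\gamma_\mathbb{H})}$ together with $\gamma_\mathbb{H} \geq 1$ and $n$ sufficiently large implies $\rho(G) \geq 2\,C_\mathbb{H}\,(10C_\mathbb{H})^{10}$ (indeed, it suffices that $n-\gamma_\mathbb{H} \geq 4C_\mathbb{H}^2(10C_\mathbb{H})^{20}$). Plugging this lower bound in gives
\[
|L^\lambda| \;<\; (10C_\mathbb{H})^{\lambda-10}\,n,
\]
as desired. There is no essential obstacle here; the only point to watch is that ``$G$ is $\mathbb{H}$-minor free'' is invoked via the specific minimal $H^*$ witnessing $C_\mathbb{H}$, so that Lemma \ref{lem2.2} delivers exactly the constant $C_\mathbb{H}$ rather than a larger $C_H$.
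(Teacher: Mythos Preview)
Your proof is correct and follows essentially the same approach as the paper: bound $d_G(u)$ from below via the eigenvalue equation and the definition of $L^\lambda$, sum over $L^\lambda$, and compare with $2e(G)<2C_{\mathbb{H}}n$ from Lemma~\ref{lem2.2}, then use the lower bound on $\rho(G)$ for $n$ large. The only cosmetic difference is that the paper inserts the estimate $\rho(G)>2C_{\mathbb{H}}(10C_{\mathbb{H}})^{10}$ before summing, whereas you sum first and substitute afterward.
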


\begin{proof}
Given an arbitrary $u\in L^\lambda$,
we have
$\rho(G)x_u\geq
\sqrt{\gamma_{\mathbb{H}}(n-\gamma_{\mathbb{H}})}(10C_\mathbb{H})^{-\lambda}x_{u^*}$,
and so
$\rho(G)x_u>2C_\mathbb{H}(10C_\mathbb{H})^{10-\lambda}x_{u^*}$
for sufficiently large $n$.
On the other hand,
$\rho(G)x_u=\sum_{v\in N_G(u)}x_v$
$\le d_G(u)x_{u^*}$.
Combining the above two inequalities gives 
$2C_\mathbb{H}(10C_\mathbb{H})^{10-\lambda}<d_G(u)$.
Summing this inequality
over all vertices $u\in L^{\lambda}$,
we obtain
\begin{align*}
2C_\mathbb{H}(10C_\mathbb{H})^{10-\lambda}|L^{\lambda}|
\!<\!\sum_{u\in L^{\lambda}}\!_G(u)
\!\le\!\sum_{u\in V(G)}\!d_G(u)=2e(G).
\end{align*}
Recall that $e(G)<C_\mathbb{H}n$ by Lemma \ref{lem2.2}. Thus,
$|L^{\lambda}|<(10C_\mathbb{H})^{\lambda-10}n$.
\end{proof}

For a vertex $u\in V(G)$,
let $N_i(u)$ denote the set of vertices at distance $i$ from $u$ in $G$.
Furthermore, we use $L_i^{\lambda}\!(u)$ and $\overline{L}_i^{\lambda}\!(u)$
to denote $N_i(u)\cap L^{\lambda}$ and $N_i(u)\setminus L^{\lambda}$, respectively.
We also denote $L_{i,j}^{\lambda}(u)=L_i^{\lambda}\!(u)\cup L_j^{\lambda}\!(u)$
and $\overline{L}_{i,j}^{\lambda}(u)=\overline{L}_i^{\lambda}\!(u)\cup\overline{L}_j^{\lambda}\!(u)$ for simplicity.

\begin{lem}\label{lem2.6}
For every $u\in V(G)$
and every positive constant $\lambda$, we have
\begin{eqnarray}\label{align.02}
\gamma_{\mathbb{H}}(n-\gamma_{\mathbb{H}})x_u\leq |N_1(u)|x_u\!+\!\Big(\frac{2C_\mathbb{H}n}{(10C_\mathbb{H})^{10-\lambda}}
\!+\!\frac{2C_\mathbb{H}n}{(10C_\mathbb{H})^{\lambda}}\Big)x_{u^*}
\!+\!\!\!\!\sum_{v\in\overline{L}_1^{\lambda}\!(u)\!, w\in N_{L_{1,2}^{\lambda}\!(u)}\!(v)}\!\!\!\!x_w.
\end{eqnarray}
\end{lem}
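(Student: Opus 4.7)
The plan is to apply the eigenvalue identity $\rho(G)x_v=\sum_{w\in N_1(v)}x_w$ twice and then invoke the hypothesis $\rho(G)^2\ge\gamma_{\mathbb{H}}(n-\gamma_{\mathbb{H}})$. This gives
$$\gamma_{\mathbb{H}}(n-\gamma_{\mathbb{H}})x_u\le \rho(G)^2 x_u=\sum_{v\in N_1(u)}\sum_{w\in N_1(v)}x_w,$$
so it suffices to bound the right-hand side by the desired expression. The closed walks $u\to v\to u$ contribute exactly $|N_1(u)|x_u$, accounting for the first term on the right-hand side of (\ref{align.02}); all remaining walks satisfy $w\neq u$.

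I will partition the ordered pairs $(v,w)$ with $v\in N_1(u)$, $w\in N_1(v)$ and $w\neq u$ into three classes according to $L^\lambda$: (i) $v\in L_1^\lambda$ and $w\in L_{1,2}^\lambda$; (ii) $v\in\overline{L}_1^\lambda$ and $w\in L_{1,2}^\lambda$; (iii) $w\notin L^\lambda$, with any $v\in N_1(u)$. Because $w$ is at distance at most two from $u$, the condition $w\in L^\lambda$ with $w\neq u$ automatically forces $w\in L_{1,2}^\lambda$, so the three classes exhaust the remaining walks. Class (ii) is exactly the last sum appearing in (\ref{align.02}) and will be kept verbatim.

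For class (i), every pair $(v,w)$ is an edge of the induced subgraph $G[L^\lambda]$. Since $G[L^\lambda]$ is itself $\mathbb{H}$-minor free, Lemma \ref{lem2.2} gives $e(G[L^\lambda])<C_\mathbb{H}|L^\lambda|$, and combined with the bound $|L^\lambda|<(10C_\mathbb{H})^{\lambda-10}n$ from Lemma \ref{lem2.5} the number of ordered pairs in class (i) is at most $2C_\mathbb{H}|L^\lambda|<\tfrac{2C_\mathbb{H}n}{(10C_\mathbb{H})^{10-\lambda}}$; using $x_w\le x_{u^*}$ this supplies the first fraction. For class (iii), the definition of $L^\lambda$ yields $x_w<(10C_\mathbb{H})^{-\lambda}x_{u^*}$, while the number of such pairs is bounded trivially by $\sum_{v\in N_1(u)}d_G(v)\le 2e(G)<2C_\mathbb{H}n$, producing $\tfrac{2C_\mathbb{H}n}{(10C_\mathbb{H})^{\lambda}}x_{u^*}$. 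Summing the three classes and appending $|N_1(u)|x_u$ delivers (\ref{align.02}).

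The main subtlety is identifying the correct partition: the unusual exponent $10-\lambda$ in the first fraction exactly mirrors the conclusion of Lemma \ref{lem2.5}, and it arises only once one notices that walks lying entirely inside $L^\lambda$ must be counted via $e(G[L^\lambda])$ rather than through $\sum_{v\in L_1^\lambda}d_G(v)\le 2e(G)$, which would be off by a factor of $(10C_\mathbb{H})^{10-\lambda}$. Once the partition is chosen this way, the remaining estimates are routine applications of the global edge bound $e(G)<C_\mathbb{H}n$.
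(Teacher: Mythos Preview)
Your proof is correct and follows essentially the same approach as the paper. The only cosmetic difference is that the paper first splits according to whether $v\in L_1^{\lambda}$ or $v\in\overline{L}_1^{\lambda}$ and then according to whether $w\in L_{1,2}^{\lambda}$ or $w\in\overline{L}_{1,2}^{\lambda}$, giving four pieces, whereas you merge the two pieces with $w\notin L^{\lambda}$ into a single class (iii); the paper then recombines those same two pieces via the bound $e(L_1^{\lambda},\overline{L}_{1,2}^{\lambda})+2e(\overline{L}_1^{\lambda})+e(\overline{L}_1^{\lambda},\overline{L}_2^{\lambda})\le 2e(G)$, arriving at exactly your estimate.
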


\begin{proof}
Set $\rho:=\rho(G)$. Recall that $\rho\geq\sqrt{\gamma_{\mathbb{H}}(n-\gamma_{\mathbb{H}})}$. 
Then, we have
\begin{eqnarray}\label{align.03}
\gamma_{\mathbb{H}}(n-\gamma_{\mathbb{H}})x_{u}\le{\rho}^2x_{u}
=\!\!\sum_{v\in N_1(u)}\rho x_v
=|N_1(u)|x_u+\!\!\!\!\sum_{v\in N_1(u), w\in N_1(v)\setminus\{u\}}\!\!\!\!x_w.
\end{eqnarray}

Observe that $N_1(v)\setminus\{u\}\subseteq\cup_{i=1}^2N_i(u)=L_{1,2}^{\lambda}(u)\cup \overline{L}_{1,2}^{\lambda}(u)$
for any $u\in V(G)$ and any $v\in N_1(u)$. 
To bound the sum on the right-hand side of (\ref{align.03}),
we partition $N_1(u)$ into $L_1^{\lambda}\!(u)\cup\overline{L}_1^{\lambda}\!(u)$
and $N_1(v)\setminus\{u\}$ into $N_{L_{1,2}^{\lambda}(u)}\!(v)\cup N_{\overline{L}_{1,2}^{\lambda}(u)}\!(v).$
Note that $x_w<(10C_\mathbb{H})^{-\lambda}x_{u^*}$
for each $w\in V(G)\setminus L$.
Thus,
\begin{eqnarray}\label{align.04}
\sum_{v\in L_1^{\lambda}\!(u), w\in N_1(v)\setminus \{u\}}\!\!\!x_w
\!\!\!&\leq&\!\!\!\!\! \sum_{v\in L_1^{\lambda}\!(u), w\in N_{L_{1,2}^{\lambda}(u)}\!(v)}\!\!\!x_{u^*}+\!\!
\sum_{v\in L_1^{\lambda}\!(u), w\in N_{\overline{L}_{1,2}^{\lambda}(u)}\!(v)}\!\!\!(10C_\mathbb{H})^{-\lambda}x_{u^*} \nonumber\\
\!\!\!&\leq&\!\!\! 2e\big(L_{1,2}^\lambda\!(u)\big)x_{u^*}
+e\big(L_1^{\lambda}\!(u),\overline{L}_{1,2}^{\lambda}(u)\big)
(10C_\mathbb{H})^{-\lambda}x_{u^*}.
\end{eqnarray}
Similarly to inequality (\ref{align.04}), we have
\begin{eqnarray}\label{align.05}
\sum_{v\in\overline{L}_1^{\lambda}\!(u), w\in N_1(v)\setminus\{u\}}\!\!\!x_w
\leq
\!\!\!\sum_{v\in\overline{L}_1^{\lambda}\!(u), w\in N_{L_{1,2}^{\lambda}(u)}\!(v)}\!\!\!x_w
+2e\big(\overline{L}_{1,2}^{\lambda}\!(u)\big)(10C_\mathbb{H})^{-\lambda}x_{u^*}.
\end{eqnarray}

By Lemma \ref{lem2.2}, we obtain
$e\big(L_{1,2}^\lambda\!(u)\big)\leq e(L^\lambda)\leq C_\mathbb{H}|L^{\lambda}|$
and
$e\big(L_1^{\lambda}\!(u),\overline{L}_{1,2}^{\lambda}\!(u)\big)
+2e\big(\overline{L}_{1,2}^{\lambda}\!(u)\big)\le 2e(G)<2C_\mathbb{H}n$.
Furthermore, by Lemma \ref{lem2.5}, 
$|L^{\lambda}|<(10C_\mathbb{H})^{\lambda-10}n$.
Summing up \eqref{align.04} and (\ref{align.05}), and combining with (\ref{align.03}),
we immediately obtain inequality (\ref{align.02}).
\end{proof}


\begin{lem}\label{lem2.7}
$|L^4|<(10C_\mathbb{H})^{6}$.
\end{lem}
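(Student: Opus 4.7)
My plan is to refine the argument of Lemma~\ref{lem2.5} by replacing its crude bound $\rho(G)x_u \le |N_1(u)|x_{u^*}$ with the sharper Lemma~\ref{lem2.6}. Writing $s=10C_\mathbb{H}$ for brevity, for every $u\in L^4$ I apply Lemma~\ref{lem2.6} with $\lambda=4$, which gives
\[
\gamma_{\mathbb{H}}(n-\gamma_{\mathbb{H}})x_u \;\le\; d(u)x_u + \Bigl(\tfrac{2C_\mathbb{H}n}{s^{6}}+\tfrac{2C_\mathbb{H}n}{s^{4}}\Bigr)x_{u^*} + \!\!\!\sum_{v\in\overline{L}_1^{4},\,w\in N_{L_{1,2}^{4}}(v)}\!\!\! x_w.
\]
Swapping the order of summation in the tail and using $x_w\le x_{u^*}$ for $w\in L^4$, the tail is at most $x_{u^*}\sum_{w\in L^4\setminus\{u\}}d(w)$, which is the key quantity that ties the estimate back to the size of $L^4$.

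Next I would sum these inequalities over $u\in L^4$. On the left, the defining property $x_u\ge s^{-4}x_{u^*}$ of $L^4$ gives $\sum_{u\in L^4}x_u\ge |L^4|\,s^{-4}x_{u^*}$; on the right, Lemma~\ref{lem2.2} controls the degrees by $\sum_{u\in L^4}d(u)\le 2e(G)<2C_{\mathbb{H}}n$, and the same bound controls $\sum_{w\in L^4}d(w)$ in the tail. After dividing through by $x_{u^*}$ and collecting like terms, this yields an inequality of the shape
\[
|L^4|\,\gamma_{\mathbb{H}}(n-\gamma_{\mathbb{H}})\,s^{-4} \;\le\; 2C_{\mathbb{H}}n + |L^4|\!\cdot\!O(n/s^4) + |L^4|\!\cdot\!2C_{\mathbb{H}}n,
\]
from which, after rearrangement and use of $\gamma_{\mathbb{H}}(n-\gamma_{\mathbb{H}})/n\to\gamma_{\mathbb{H}}$ as $n\to\infty$, the desired constant bound $|L^4|<s^6$ should fall out.

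The main obstacle is that the displayed inequality, as it stands, yields only $|L^4|=O(\sqrt{n})$: the crude lower bound $d(u)\ge \rho s^{-4}=\Omega(\sqrt{n})$ for $u\in L^4$, combined with $\sum_{u\in L^4}d(u)<2C_{\mathbb{H}}n$, already gives this, and one notes that $\rho^2\le 2e(G)<2C_{\mathbb{H}}n$ exactly cancels the apparent gain from squaring the eigenvalue equation. To break this barrier and obtain a bound independent of $n$, I expect to need additional structural input: either (i) bootstrapping through several values of $\lambda$ in Lemma~\ref{lem2.6} to improve the lower bound on $d(u)$ from $\Omega(\sqrt{n})$ to $(1-o(1))n$ for $u\in L^4$, or (ii) exploiting the $\mathbb{H}$-minor freeness of $G$ via a common-neighbor argument in the spirit of Lemma~\ref{lem3.3}, which forbids too many vertices of near-maximum degree (since $\gamma_{\mathbb{H}}+1$ such vertices together with $\alpha_{\mathbb{H}}+1$ common neighbors would produce a $B_{\gamma_H+1,\alpha_H}$ minor, hence an $H$ minor for some $H\in\mathbb{H}$). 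Either route, combined with the chain of inequalities above, should yield $|L^4|<(10C_\mathbb{H})^6$.
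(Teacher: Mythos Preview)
Your initial approach of summing Lemma~\ref{lem2.6} over all $u\in L^4$ does not work, as you yourself observe: the tail bound $x_{u^*}\sum_{w\in L^4}d(w)\le 2C_{\mathbb H}n\,x_{u^*}$ swamps the left-hand side $|L^4|\gamma_{\mathbb H}(n-\gamma_{\mathbb H})s^{-4}x_{u^*}$, so nothing comes out. Your fallback~(ii), the common-neighbour argument, is the engine behind the later Lemmas~\ref{lem2.8} and~\ref{lem2.9} but is not used here. Your fallback~(i) points in the right direction---one does want to show that every $u\in L^4$ has degree linear in $n$ and then conclude via $\sum_{u\in L^4}d(u)\le 2e(G)<2C_{\mathbb H}n$---but the execution is not an iterated bootstrap and does not aim for $(1-o(1))n$; the paper gets $d(u)\ge (10C_{\mathbb H})^{-5}n$ in a single step.

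The step you are missing is a much sharper handling of the tail in~\eqref{align.02}, applied at a \emph{single} vertex and by contradiction. Suppose some $u_0\in L^4$ has $|N_1(u_0)|<(10C_{\mathbb H})^{-5}n$, and apply Lemma~\ref{lem2.6} with $\lambda=5$ (not $4$). The tail is $\sum_{v\in\overline L_1^5,\,w\in N_{L_{1,2}^5}(v)}x_w\le e(\overline L_1^5,L_{1,2}^5)\,x_{u^*}$; the point is that $\overline L_1^5\subseteq N_1(u_0)$ and $L_{1,2}^5\subseteq L^5$, so by Lemma~\ref{lem2.2} applied to the induced subgraph on $N_1(u_0)\cup L^5$ this edge count is at most $C_{\mathbb H}\bigl(|N_1(u_0)|+|L^5|\bigr)$. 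Now the assumption bounds $|N_1(u_0)|$, and Lemma~\ref{lem2.5} with $\lambda=5$ bounds $|L^5|<(10C_{\mathbb H})^{-5}n$. Thus the entire right-hand side of~\eqref{align.02} is $O\bigl((10C_{\mathbb H})^{-5}n\bigr)x_{u^*}$, while the left-hand side is at least $\gamma_{\mathbb H}(n-\gamma_{\mathbb H})(10C_{\mathbb H})^{-4}x_{u^*}$ since $u_0\in L^4$; this is a contradiction. The choice $\lambda=5$ is what balances the two middle terms of~\eqref{align.02} and, crucially, what makes Lemma~\ref{lem2.5} deliver $|L^\lambda|=o(n)$ with the right constant. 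Your bound $\sum_{w\in L^4}d(w)$ on the tail throws away precisely this structural information.
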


\begin{proof}
We first show that
$|N_1(u)|\geq(10C_\mathbb{H})^{-5}n$ for each $u\in L^4$.
Suppose, for contradiction, that there exists $u_0\in L^4$ such that
$|N_1(u_0)|<(10C_\mathbb{H})^{-5}n$.
Setting $u=u_0$ and $\lambda=5$ in \eqref{align.02}, we obtain
\begin{eqnarray}\label{align.06}
\gamma_{\mathbb{H}}(n-\gamma_{\mathbb{H}})x_{u_0}
\leq |N_1(u_0)|x_{u_0}+\frac{4C_\mathbb{H}n}{{(10C_\mathbb{H})}^5}x_{u^*}
+\!\!\!\sum_{v\in\overline{L}_1^{5}\!(u_0), w\in N_{L_{1,2}^5\!(u_0)}\!(v)}\!\!\!x_w.
\end{eqnarray}
Notice that $\overline{L}_1^{5}(u_0)\subseteq N_1(u_0)$
and $L_{1,2}^5(u_0)\subseteq L^5$.
By Lemma \ref{lem2.2}, we have
$e\big(\overline{L}_1^{5}\!(u_0),L_{1,2}^5\!(u_0)\big)\leq
e\big(N_1(u_0)\cup L^5\big)
\leq C_\mathbb{H}\big(|N_1(u_0)|
+|L^5|\big).$
By the assumption, $|N_1(u_0)|<(10C_\mathbb{H})^{-5}n$,
and by Lemma \ref{lem2.5}, we also have $|L^{5}|\leq(10C_\mathbb{H})^{-5}n$.
Hence,
$$|N_1(u_0)|x_{u_0}
\!+\!\!\!\sum_{v\in\overline{L}_1^{5}\!(u_0), w\in N_{L_{1,2}^5\!(u_0)}\!(v)}\!\!\!x_w\leq\Big(|N_1(u_0)|
\!+\!e\big(\overline{L}_1^{5}\!(u_0),L_{1,2}^5\!(u_0)\big)\Big)x_{u^*}
\leq\frac{(1+2C_\mathbb{H})n}{{(10C_\mathbb{H})}^5}x_{u^*}.
$$
Combining inequality \eqref{align.06}, we obtain
$\gamma_{\mathbb{H}}(n-\gamma_{\mathbb{H}})x_{u_0}
\leq(1+6C_\mathbb{H}){(10C_\mathbb{H})}^{-5}n x_{u^*}.$

However, recall that $C_{\mathbb{H}}>\gamma_{\mathbb{H}}\geq1$, and
note that $x_{u_0}\ge(10C_\mathbb{H})^{-4}x_{u^*}$ as $u_0\in L^4$.
Consequently,
$\gamma_{\mathbb{H}}(n-\gamma_{\mathbb{H}})x_{u_0}\geq
\frac{7}{10}nx_{u_0}
\geq7C_\mathbb{H}{(10C_\mathbb{H})}^{-5}n x_{u^*}$,
which leads to a contradiction.
Therefore, $|N_1(u)|\geq(10C_\mathbb{H})^{-5}n$ for each $u\in L^4$.
Summing this inequality over all vertices $u\in L^{4}$, we obtain
$$|L^4|(10C_\mathbb{H})^{-5}n\le\sum_{u\in L^4}|N_1(u)|\le\!\!\sum_{u\in V(G)}|N_1(u)|
=2e(G).$$
Since $e(G)\leq C_\mathbb{H}n,$ it follows that $|L^4|\leq 2C_\mathbb{H}(10C_\mathbb{H})^{5}<(10C_\mathbb{H})^{6}$, as desired.
\end{proof}

\begin{lem}\label{lem2.8}
For each vertex $u\in L^4$,
we have $|N_1(u)|\ge(\frac{x_u}{x_{u^*}}-\frac{1}{(10C_\mathbb{H})^3})n$.
\end{lem}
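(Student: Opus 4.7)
The plan is to apply Lemma \ref{lem2.6} with parameter $\lambda=5$, mirroring the strategy in the proof of Lemma \ref{lem2.7}. For $u\in L^4$ this yields
\[
\gamma_{\mathbb{H}}(n-\gamma_{\mathbb{H}})\,x_u\;\le\; |N_1(u)|\,x_u \;+\; \frac{4C_\mathbb{H}n}{(10C_\mathbb{H})^5}\,x_{u^*}\;+\;S_5,
\]
where $S_5=\sum_{v\in\overline{L}_1^{5}}\sum_{w\in N_{L_{1,2}^5}(v)}x_w$. I then plan to bound $S_5$ from above, rearrange to isolate $|N_1(u)|$, and use the lower bound $\gamma_{\mathbb{H}}(n-\gamma_{\mathbb{H}})\ge n-\gamma_{\mathbb{H}}^2\ge n-C_\mathbb{H}^2$ to produce a linear lower bound on $|N_1(u)|$ in terms of the ratio $x_u/x_{u^*}$.

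For the bound on $S_5$, I would first use $x_w\le x_{u^*}$ to write $S_5\le x_{u^*}\cdot e(\overline{L}_1^5,L_{1,2}^5)$, and then apply Lemma \ref{lem2.2} to the induced subgraph $G[N_1(u)\cup L^5]$, which is $\mathbb{H}$-minor free as a subgraph of $G$. This gives $e(\overline{L}_1^5,L_{1,2}^5)\le C_\mathbb{H}(|N_1(u)|+|L^5|)$, and combined with Lemma \ref{lem2.5}'s estimate $|L^5|<(10C_\mathbb{H})^{-5}n$ it yields $S_5\le C_\mathbb{H}|N_1(u)|x_{u^*}+\tfrac{C_\mathbb{H}n}{(10C_\mathbb{H})^5}x_{u^*}$. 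The key algebraic step is then to use the trivial inequality $|N_1(u)|x_{u^*}\ge |N_1(u)|x_u$ to convert the coefficient of $|N_1(u)|$ on the left from $x_u$ to $x_{u^*}$, so that after dividing by $x_{u^*}$ and writing $a:=x_u/x_{u^*}$ one obtains an estimate of the form $|N_1(u)|\ge an-\gamma_\mathbb{H}^2-C_1-S_5/x_{u^*}$ with only additive (not multiplicative) errors.

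The main obstacle is matching the sharp constant $1/(10C_\mathbb{H})^3$ in the statement. The naive bound on $S_5$ gives a contribution of order $C_\mathbb{H}n$ after division by $x_{u^*}$, which far exceeds the budget $n/(10C_\mathbb{H})^3$; solving the resulting inequality directly only gives $|N_1(u)|\ge an/(1+C_\mathbb{H})+O(n/(10C_\mathbb{H})^4)$, weaker than the claim by a factor $(1+C_\mathbb{H})$. To close this gap I plan to refine the $S_5$ bound by splitting $L_{1,2}^5=(L_{1,2}^5\cap L^4)\cup(L_{1,2}^5\setminus L^4)$: on the second piece every $x_w<(10C_\mathbb{H})^{-4}x_{u^*}$, so Lemma \ref{lem2.2} contributes at most $(10C_\mathbb{H})^{-4}\cdot C_\mathbb{H}n\cdot x_{u^*}=nx_{u^*}/(5(10C_\mathbb{H})^3)$, well within budget; on the first piece Lemma \ref{lem2.7} gives $|L^4|<(10C_\mathbb{H})^6$, so the support is of constant size and the contribution can be treated using the preliminary lower bound $|N_1(u)|\ge n/(10C_\mathbb{H})^5$ established in the proof of Lemma \ref{lem2.7} (possibly via a bootstrap: feed the first estimate back into the $S_5$ bound and reapply Lemma \ref{lem2.6}). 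Combining the two pieces together with $C_1\le n/(2(10C_\mathbb{H})^3)$ and $\gamma_\mathbb{H}^2\le C_\mathbb{H}^2$ produces the total error $\le n/(10C_\mathbb{H})^3$ for $n$ sufficiently large, completing the proof.
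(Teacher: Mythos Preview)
Your plan has a genuine gap in the treatment of the ``first piece'' $S_5^{(1)}=\sum_{v\in\overline{L}_1^{5}}\sum_{w\in N_{L_{1,2}^5\cap L^4}(v)}x_w$. Knowing that $|L^4|<(10C_\mathbb{H})^6$ is constant does \emph{not} make this sum small: what matters is the number of edges $e(\overline{L}_1^5,L_{1,2}^5\cap L^4)$, and each target vertex in $L^4$ can receive $\Theta(n)$ edges from $\overline{L}_1^5$ (indeed, in the extremal graph $B_{\gamma_\mathbb{H},n-\gamma_\mathbb{H}}$ this edge count is $\sim(\gamma_\mathbb{H}-1)n$). Any bound coming from Lemma~\ref{lem2.2} or from $|L^4|\cdot n$ gives at best $S_5^{(1)}\le C_\mathbb{H}|N_1(u)|\,x_{u^*}+O(1)\,x_{u^*}$, and plugging this back leaves you with $(1+C_\mathbb{H})|N_1(u)|\,x_{u^*}$ on the right---exactly the factor you already identified as fatal. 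A bootstrap does not help either: feeding a lower bound on $|N_1(u)|$ back into $S_5$ does nothing to reduce the \emph{coefficient} of $|N_1(u)|$, only the additive error.

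The missing idea is structural. The paper works with $\lambda=4$ and defines $L_0\subseteq\overline{L}_1^4$ as the set of vertices with at least $\gamma_{H^*}$ neighbours in $L_{1,2}^4$, where $H^*$ is minimal in~$\mathbb{H}$. If $|L_0|$ exceeded $\binom{|L_{1,2}^4|}{\gamma_{H^*}}|H^*|$ (a constant, since $|L_{1,2}^4|\le|L^4|<(10C_\mathbb{H})^6$), pigeonhole would force some $\gamma_{H^*}$-subset of $L_{1,2}^4$ to have $>|H^*|$ common neighbours in $L_0\subseteq N_1(u)$; together with $u$ this gives a $K_{\gamma_{H^*}+1,|H^*|}$ subgraph, which contracts to $B_{\gamma_{H^*}+1,\alpha_{H^*}}$ and hence contains an $H^*$ minor, a contradiction. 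Thus $|L_0|=O(1)$, and every vertex of $\overline{L}_1^4\setminus L_0$ has at most $\gamma_\mathbb{H}-1$ neighbours in $L_{1,2}^4$, yielding $e(\overline{L}_1^4,L_{1,2}^4)\le(\gamma_\mathbb{H}-1)|N_1(u)|+o(n)$. Substituting into \eqref{align.02} with $\lambda=4$ then gives $\gamma_\mathbb{H}(n-\gamma_\mathbb{H})x_u\le\gamma_\mathbb{H}\bigl(|N_1(u)|+O(n/(10C_\mathbb{H})^4)\bigr)x_{u^*}$, and the factor $\gamma_\mathbb{H}$ cancels cleanly to give the claimed bound. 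It is precisely this pigeonhole/minor argument---producing the coefficient $\gamma_\mathbb{H}-1$ rather than $C_\mathbb{H}$---that your proposal lacks.
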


\begin{proof}
Choose an arbitrary $u\in L^4$ and a minimal graph $H^*$
with respect to $\mathbb{H}$.
Then, $\gamma_\mathbb{H}=\gamma_{H^*}=|H^*|-\alpha_{H^*}-1$.
Recall that $L_i^4(u)=N_i(u)\cap L^4$,
$\overline{L}_i^4(u)=N_i(u)\setminus L^4$
and $L_{1,2}^4(u)=L_1^4(u)\cup L_2^4(u)$.
Let $L_0$ be the subset of $\overline{L}_1^4(u)$
in which each vertex has at least $\gamma_{H^*}$ neighbors in $L_{1,2}^4(u)$.

We first claim that $|L_0|\leq\varphi|H^*|$, where $\varphi=\binom{|L_{1,2}^4(u)|}{\gamma_{H^*}}$.
Indeed, if $|L_{1,2}^4(u)|\leq\gamma_{H^*}-1$,
then $L_0=\varnothing$, and we are done.
Now, consider the case where $|L_{1,2}^4(u)|\geq \gamma_{H^*}$.
Suppose, for contradiction, that $|L_0|>\varphi|H^*|$.
Since each vertex in $L_0$ has at most $\varphi$ choices for selecting 
$\gamma_{H^*}$ neighbors within $L_{1,2}^4(u)$,
the pigeonhole principle guarantees the existence of $\gamma_{H^*}$ vertices in $L_{1,2}^4(u)$
that have at least $|L_0|/\varphi>|H^*|$ common neighbors in $L_0$.
Clearly, $u\notin L_{1,2}^4(u)$
and $L_0\subseteq\overline{L}_1^4(u)\subseteq N_1(u)$.
Hence, $u$ and these $\gamma_{H^*}$ vertices must have
$|H^*|$ common neighbors in $L_0$, which implies that $G$ contains a bipartite subgraph
$G[S,T]$ isomorphic to $K_{\gamma_{H^*}+1,|H^*|}$.
As noted earlier, $|H^*|-(\gamma_{H^*}+1)=\alpha_{H^*}$.
Clearly, by contracting $\gamma_{H^*}+1$ independent edges in $G[S,T]$,
we obtain a new graph
isomorphic to the book $B_{\gamma_{H^*}+1,\alpha_{H^*}}$.
By Lemma \ref{lem2.3}, this implies that $G$ has an $H^*$ minor, leading to a contradiction.
Therefore, $|L_0|\leq\varphi|H^*|$, as claimed.

By Lemma \ref{lem2.7}, we have $|L_{1,2}^4(u)|\leq|L^4|<(10C_\mathbb{H})^{6}$,
which implies that $\varphi$ is constant.
Combining $|L_0|\leq\varphi|H^*|$, we obtain
$e(L_0,L_{1,2}^4(u))\leq|L_0||L_{1,2}^4(u)|
\leq(10C_\mathbb{H})^{-4}n$ for sufficiently large $n$.
On the other hand, by the choice of $L_0$, we know that
$e(\overline{L}_1^4(u)\setminus L_0,L_{1,2}^4(u))\leq|\overline{L}_1^4(u)\setminus L_0|(\gamma_{H^*}-1)\leq|N_1(u)|(\gamma_\mathbb{H}-1)$.
Summing up the above two inequalities gives
\begin{eqnarray}\label{align.07}
e\big(\overline{L}_1^4(u),L_{1,2}^4(u)\big)
\le|N_1(u)|(\gamma_\mathbb{H}-1)+(10C_\mathbb{H})^{-4}n.
\end{eqnarray}
Notice that
$\sum_{v\in\overline{L}_1^4(u)}\sum_{w\in N_{L_{1,2}^4(u)}(v)}x_w
\leq e\big(\overline{L}_1^4(u),L_{1,2}^4(u)\big)x_{u^*}.$
Now, by setting $\lambda=4$ in inequality \eqref{align.02} and combining with \eqref{align.07},
we obtain
\begin{eqnarray*}
 \gamma_\mathbb{H}(n-\gamma_\mathbb{H})x_u
\!\!\!&\leq&\!\!\! \Big(|N_1(u)|+\frac{2C_\mathbb{H}n}{(10C_\mathbb{H})^{6}}
 +\frac{2C_\mathbb{H}n}{(10C_\mathbb{H})^{4}}
 +|N_1(u)|(\gamma_\mathbb{H}-1)+\frac{n}{(10C_\mathbb{H})^{4}}\Big)x_{u^*}\\
\!\!\!&\leq&\!\!\! \gamma_\mathbb{H}\Big(|N_1(u)|+\frac{3C_\mathbb{H}n}{(10C_\mathbb{H})^4}\Big)x_{u^*}.
\end{eqnarray*}
This yields $|N_1(u)|\geq (n-\gamma_\mathbb{H})\frac{x_u}{x_{u^*}}-\frac{3C_\mathbb{H}n}{(10C_\mathbb{H})^4}.$
Since $\gamma_\mathbb{H}\frac{x_u}{x_{u^*}}\leq\gamma_\mathbb{H}
\leq\frac{7C_\mathbb{H}n}{(10C_\mathbb{H})^4}$ for sufficiently large $n$,
it follows that
$|N_1(u)|\ge(\frac{x_u}{x_{u^*}}-\frac{1}{(10C_\mathbb{H})^3})n$,
as desired.
\end{proof}


\begin{lem}\label{lem2.9}
For every vertex $u\in L^1$,
we have $x_u\ge\big(1-\frac{1}{2(10C_\mathbb{H})^2}\big)x_{u^*}$
and $|N_1(u)|\ge \big(1-\frac{1}{(10C_\mathbb{H})^2}\big)n$.
Moreover, we have $|L^{1}|=\gamma_\mathbb{H}$.
\end{lem}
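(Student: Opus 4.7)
The plan is to establish the three conclusions of the lemma in the following order: first the lower bound $|L^1|\ge\gamma_\mathbb{H}$ via a short second-moment spectral count, and then the upper bound $|L^1|\le\gamma_\mathbb{H}$ jointly with the pointwise estimates via an absorbing-style minor argument combined with Lemma~\ref{lem2.8}.

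To obtain $|L^1|\ge\gamma_\mathbb{H}$, I will apply the identity
\[\rho^2 x_{u^*}=\sum_{w\in V(G)}\bigl|N_1(u^*)\cap N_1(w)\bigr|\,x_w\]
and split the sum according to whether $w\in L^1$. On $L^1$, I bound crudely by $|L^1|\cdot n\cdot x_{u^*}$. Off $L^1$, I use $x_w<x_{u^*}/(10C_\mathbb{H})$ together with the global identity $\sum_w|N_1(u^*)\cap N_1(w)|=\sum_{v\in N_1(u^*)}d_G(v)\le 2e(G)\le 2C_\mathbb{H}\,n$ (from Lemma~\ref{lem2.2}) to obtain a contribution of at most $nx_{u^*}/5$. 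This yields $\rho^2\le |L^1|\,n+n/5$; combined with the hypothesis $\rho^2\ge\gamma_\mathbb{H}(n-\gamma_\mathbb{H})$ and the integrality of $|L^1|$, this forces $|L^1|\ge\gamma_\mathbb{H}$ once $n$ is large.

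For the upper bound and the pointwise estimates, I will suppose toward a contradiction that $|L^1|=k\ge\gamma_\mathbb{H}+1$. Fix a minimal $H^*\in\mathbb{H}$ (so $\gamma_{H^*}=\gamma_\mathbb{H}$ and $\alpha_{H^*}=\alpha_\mathbb{H}$). I will rerun the bipartite minor argument from the proof of Lemma~\ref{lem2.8}, but now with $L^1$ in place of $L_{1,2}^4$: if more than $\binom{k}{\gamma_\mathbb{H}+1}|H^*|$ vertices $v\notin L^1$ satisfied $d_{L^1}(v)\ge\gamma_\mathbb{H}+1$, pigeonholing over $(\gamma_\mathbb{H}+1)$-subsets of $L^1$ would supply $\gamma_\mathbb{H}+1$ vertices of $L^1$ with $|H^*|$ common neighbors outside $L^1$, giving a $K_{\gamma_\mathbb{H}+1,|H^*|}$ subgraph and hence, after contracting a perfect matching on the smaller side, a $B_{\gamma_\mathbb{H}+1,\alpha_{H^*}}$-minor; this is an $H^*$-minor by Lemma~\ref{lem2.3}, contradicting $\mathbb{H}$-minor freeness. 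Setting $\beta_u:=x_u/x_{u^*}$ for $u\in L^1$, combining this count with Lemma~\ref{lem2.8}'s inequality $|N_1(u)|\ge(\beta_u-1/(10C_\mathbb{H})^3)n$ and with the double count $\sum_{u\in L^1}|N_1(u)|=2e(L^1)+e(L^1,V(G)\setminus L^1)$ yields $\sum_{u\in L^1}\beta_u\le\gamma_\mathbb{H}+o(1)$ as $n\to\infty$. Starting from the a priori bound $k\le(10C_\mathbb{H})^6$ of Lemma~\ref{lem2.7} and iterating this estimate (each round uses the improved bound on $k$ to shrink the additive error in the application of Lemma~\ref{lem2.6}), I will bootstrap to $k\le\gamma_\mathbb{H}$ and simultaneously $\beta_u\ge 1-1/(2(10C_\mathbb{H})^2)$ for every $u\in L^1$; one final invocation of Lemma~\ref{lem2.8} then produces the degree bound $|N_1(u)|\ge(1-1/(10C_\mathbb{H})^2)n$.

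The main obstacle is the bootstrap step, which must bridge the crude direct consequence $k\le 10C_\mathbb{H}\gamma_\mathbb{H}$ (coming from $\sum\beta_u\le\gamma_\mathbb{H}+o(1)$ together with the weak lower bound $\beta_u\ge 1/(10C_\mathbb{H})$) and the sharp bound $k\le\gamma_\mathbb{H}$. The trick will be to retain the factor $(\beta_u+\gamma_\mathbb{H}-1)/\gamma_\mathbb{H}$ in Lemma~\ref{lem2.8}'s inequality rather than absorbing it into the error, so that any $\beta_u$ strictly below $1$ gives a correspondingly improved lower bound on $|N_1(u)|$ which the double-counting cap cannot accommodate when $k$ exceeds $\gamma_\mathbb{H}$.
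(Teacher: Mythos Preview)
Your argument for $|L^1|\ge\gamma_\mathbb{H}$ via $\rho^2 x_{u^*}=\sum_w|N_1(u^*)\cap N_1(w)|x_w$ is clean and correct; it is in fact a slicker variant of the paper's treatment of that inequality.

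The gap is in your bootstrap for the upper bound and the pointwise estimates. Write $\gamma=\gamma_\mathbb{H}$. Your double count together with the pigeonhole bound on $e(L^1,V\setminus L^1)$ gives, after dividing by $n$,
\[
\sum_{u\in L^1}g(\beta_u)\ \le\ \gamma+o(1),\qquad g(\beta)=\frac{\gamma\beta}{\beta+\gamma-1},
\]
where I have used the refined form of Lemma~\ref{lem2.8} that keeps the factor $\beta_u+\gamma-1$ in the denominator (this is indeed the sharper inequality, and you are right that $g(\beta)\ge\beta$ with equality only at $\beta=1$). The problem is that $g$ is increasing, so the only lower bound on each summand available to you is $g(1/(10C_\mathbb{H}))=\gamma/\bigl(1+10C_\mathbb{H}(\gamma-1)\bigr)$. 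Plugging this in yields $k\le 1+10C_\mathbb{H}(\gamma-1)+o(1)$, and iterating does not improve this: the fixed point of the recursion is of order $10C_\mathbb{H}\gamma$, not $\gamma$. Concretely, for $\gamma\ge 2$ the constraint $\sum_u g(\beta_u)\le\gamma+o(1)$ is satisfied by $k\approx 10C_\mathbb{H}\gamma$ vertices each with $\beta_u=1/(10C_\mathbb{H})$, so no amount of bootstrapping on $k$ alone will close the gap. Your stated ``trick''---that a $\beta_u$ below $1$ yields an improved lower bound on $|N_1(u)|$ that the cap cannot accommodate---goes in the wrong direction: smaller $\beta_u$ makes $g(\beta_u)$ \emph{smaller}, which makes the cap \emph{easier} to satisfy, not harder.

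The paper sidesteps this entirely by reversing your order. It first proves the pointwise bound $x_u\ge\bigl(1-\tfrac{1}{2(10C_\mathbb{H})^2}\bigr)x_{u^*}$ directly: assume some $u_0\in L^1$ violates it, apply inequality~\eqref{align.02} at $u=u^*$ with $\lambda=4$, and isolate the contribution of $u_0$ to the sum $\sum_{v\in\overline L_1^4,\,w\in N_{L_{1,2}^4}(v)}x_w$, weighting it by the actual value $x_{u_0}$ rather than $x_{u^*}$. The shortfall $x_{u^*}-x_{u_0}>x_{u^*}/\bigl(2(10C_\mathbb{H})^2\bigr)$ on at least $|\overline L_1^4\cap N_1(u_0)|\ge 9n/(100C_\mathbb{H})$ edges produces a deficit that the remaining error terms cannot cover, a contradiction. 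Once every $u\in L^1$ has degree $\ge\bigl(1-1/(10C_\mathbb{H})^2\bigr)n$, any $\gamma+1$ of them share at least $n-(\gamma+1)n/(10C_\mathbb{H})^2\ge|H^*|$ common neighbors outright (no pigeonhole needed), yielding the $B_{\gamma+1,\alpha_{H^*}}$ minor and hence $|L^1|\le\gamma$. The missing idea in your plan is precisely this: you must first pin down the eigenvector coordinates before you can count, not the other way around.
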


\begin{proof}
We first show the lower bounds of $x_u$ and $|N_1(u)|$.
Suppose, for contradiction, that there exists $u_0\in L^1$ with $x_{u_0}<\big(1-\frac{1}{2(10C_\mathbb{H})^2}\big)x_{u^*}$.
By the definition of $L^1$,
we know that $x_{u_0}\geq\frac{x_{u^*}}{10C_\mathbb{H}}$.
By Lemma \ref{lem2.8}, we have
$$|N_1(u^*)|\ge\big(1-\frac{1}{(10C_\mathbb{H})^3}\big)n~~~\text{and}
~~~|N_1(u_0)|\ge\big(\frac{1}{10C_\mathbb{H}}-\frac{1}{(10C_\mathbb{H})^3}\big)n.$$
Now, let $L_i^4=N_i(u^*)\cap L^4$
and $\overline{L}_i^4=N_i(u^*)\setminus L^4$.
By Lemma \ref{lem2.7},
we have $|L^4|<(10C_\mathbb{H})^{6}<\frac{n}{(10C_\mathbb{H})^3}$
for sufficiently large $n$.
Hence, $|\overline{L}_1^4|\geq|N_1(u^*)|-|L^4|
\ge\big(1-\frac{2}{(10C_\mathbb{H})^3}\big)n,$
and thus
\begin{eqnarray}\label{align.08}
\big|\overline{L}_1^4\cap N_1(u_0)\big|\ge\big|\overline{L}_1^4\big|
+\big|N_1(u_0)\big|-n\ge\Big(\frac{1}{10C_\mathbb{H}}-\frac{3}{(10C_\mathbb{H})^3}\Big)n
\geq\frac{9n}{100C_\mathbb{H}}.
\end{eqnarray}

In view of (\ref{align.08}), $u_0$ has neighbors in $\overline{L}_1^4$.
Since $\overline{L}_1^4\subseteq N_1(u^*)$,
we have $u_0\in N_1(u^*)\cup N_2(u^*)$.
Moreover, since $u_0\in L^1\subseteq L^4$,
we conclude that $u_0\in L_{1,2}^4 $,
where $L_{1,2}^4=L_1^4\cup L_2^4$.
Now, setting $u=u^*$ and $\lambda=4$ in inequality \eqref{align.02},
we deduce that
\begin{eqnarray*}
\!&&\! \gamma_\mathbb{H}(n-\gamma_\mathbb{H})x_{u^*}\\
\!&\leq&\! \Big(|N_1(u^*)|+\frac{2C_\mathbb{H}n}{(10C_\mathbb{H})^{6}}
+\frac{2C_\mathbb{H}n}{(10C_\mathbb{H})^{4}}
+e\big(\overline{L}_1^4,L_{1,2}^4\setminus\{u_0\}\big)\Big)x_{u^*}
+e\big(\overline{L}_1^4,\{u_0\}\big)x_{u_0}\\
\!&\leq&\! \Big(|N_1(u^*)|+\frac{2.5C_\mathbb{H}n}{(10C_\mathbb{H})^4}
+e\big(\overline{L}_1^4,L_{1,2}^4\big)\Big)x_{u^*}
+e\big(\overline{L}_1^4,\{u_0\}\big)\big(x_{u_0}-x_{u^*}\big).
\end{eqnarray*}
Here, $x_{u_0}-x_{u^*}<-\frac{x_{u^*}}{2(10C_\mathbb{H})^2}$, based on the previous assumption.

From (\ref{align.07}), we know that
$e\big(\overline{L}_1^4,L_{1,2}^4\big)
\leq(\gamma_\mathbb{H}-1)|N_1(u^*)|+\frac{C_\mathbb{H}n}{(10C_\mathbb{H})^{4}}.$
Moreover, it is easy to see that $\gamma_\mathbb{H}^2\leq\frac{0.5C_\mathbb{H}n}{(10C_\mathbb{H})^{4}}$
for sufficiently large $n$.
From the inequality above, we can deduce that
$$\gamma_\mathbb{H} n
\leq\gamma_\mathbb{H}|N_1(u^*)|+\frac{4C_\mathbb{H}n}{(10C_\mathbb{H})^4}
-\frac{e\big(\overline{L}_1^4,\{u_0\}\big)}{2(10C_\mathbb{H})^2}
<\gamma_\mathbb{H}n+\frac{4C_\mathbb{H}n}{(10C_\mathbb{H})^4}
-\frac{e\big(\overline{L}_1^4,\{u_0\}\big)}{2(10C_\mathbb{H})^2},$$
which yields 
$e\big(\overline{L}_1^4,\{u_0\}\big)<\frac{8n}{100C_\mathbb{H}},$
a contradiction with (\ref{align.08}).
Hence, we conclude that $x_u\ge\big(1-\frac{1}{2(10C_\mathbb{H})^2}\big)x_{u^*}$
for each $u\in L^1$.
Furthermore,
from Lemma \ref{lem2.8}, it follows that for each $u\in L^1$,
$$|N_1(u)|\geq
\Big(1-\frac{1}{2(10C_\mathbb{H})^2}-\frac{1}{(10C_\mathbb{H})^3}\Big)n
\ge\Big(1-\frac{1}{(10C_\mathbb{H})^2}\Big)n.$$

Now, it remains to show that $|L^1|=\gamma_\mathbb{H}$.
Suppose first that
$|L^{1}|\geq\gamma_\mathbb{H}+1.$
Then $|L^{1}|\geq\gamma_{H^*}+1$,
where $H^*$ is minimal with respect to $\mathbb{H}$.
Recall that $C_{\mathbb{H}}>\gamma_{\mathbb{H}}=\gamma_{H^*}$,
and notice that every vertex $u\in L^{1}$
has at most $n/(10C_\mathbb{H})^2$ non-neighbors.
Therefore, any $\gamma_{H^*}+1$ vertices chosen from $L^{1}$ have at least $n-\frac{(\gamma_{H^*}+1)}{(10C_\mathbb{H})^2}n\geq|H^*|$ common neighbors.
Hence, $G$ contains a bipartite subgraph $G[S,T]$ isomorphic to $K_{\gamma_{H^*}+1,|H^*|}$.
Note that $|H^*|-(\gamma_{H^*}+1)=\alpha_{H^*}$.
Contracting $\gamma_{H^*}+1$ independent edges in $K_{\gamma_{H^*}+1,|H^*|}$ yields a copy of $B_{\gamma_{H^*}+1,\alpha_{H^*}}$.
By Lemma \ref{lem2.3}, this implies that $G$ contains an $H^*$ minor, a contradiction.
Hence, $|L^1|\leq\gamma_\mathbb{H}$.

Next, suppose that $|L^1|\leq\gamma_\mathbb{H}-1$.
Since $u^*\in L^{1}\setminus L_{1,2}^4$,
we have $|L^{1}\cap L_{1,2}^4|\le\gamma_\mathbb{H}-2$,
and thus $e\big(\overline{L}_1^4,L_{1,2}^4\cap L^1\big)\leq
(\gamma_\mathbb{H}-2)n$.
On the other hand, by Lemma \ref{lem2.2}, we know that
$e\big(\overline{L}_1^4,L_{1,2}^4\setminus L^{1}\big)
\leq e(G)<C_\mathbb{H}n,$
and by the definition of $L^1$, we also know that
$x_w<\frac{x_{u^*}}{10C_\mathbb{H}}$
for each $w\in L_{1,2}^4\setminus L^1.$
Now, by setting $u=u^*$ and $\lambda=4$ in inequality \eqref{align.02},
we observe that
\begin{eqnarray*}
\gamma_\mathbb{H}(n-\gamma_\mathbb{H})x_{u^*}
\!&\leq&\! \Big(|N_1(u^*)|\!+\frac{2.5C_\mathbb{H}n}{(10C_\mathbb{H})^4}+\!
e\big(\overline{L}_1^4,L_{1,2}^4\cap L^1\big)\Big)x_{u^*}
\!+\!e\Big(\overline{L}_1^4,L_{1,2}^4\setminus L^{1}\Big)
\frac{x_{u^*}}{10C_\mathbb{H}}\\
\!&\leq&\! \Big(n+\frac{n}{10}+(\gamma_\mathbb{H}-2)n
+\frac{n}{10}\Big)x_{u^*}
= \big(\gamma_\mathbb{H}-\frac45\big)nx_{u^*}.
\end{eqnarray*}
This yields $\gamma_\mathbb{H}^2\geq\frac{4}{5}n$, a contradiction.
Therefore, we have $|L^1|=\gamma_\mathbb{H}$,
completing the proof.
\end{proof}

To prove Theorem \ref{thm2.1},
it suffices to find a set $L$ of exactly $\gamma_\mathbb{H}$ vertices in $G$
such that $x_u\ge\big(1-\frac{1}{2(10C_\mathbb{H})^2}\big)x_{u^*}$
and $d_G(u)\ge\big(1-\frac{1}{(10C_\mathbb{H})^2}\big)n$
for every $u\in L$.
By Lemma \ref{lem2.9}, taking $L=L^1$ immediately yields the desired result
in Theorem \ref{thm2.1}.


\begin{thebibliography}{99}
\setlength{\itemsep}{0pt}

\bibitem{Hua1}
Y. Akama, B.B. Hua, Y.H. Su, L.L. Wang, A curvature notion for planar graphs stable under planar duality,
\emph{Adv. Math.} \textbf{385} (2021), Paper No. 107731, 44 pp.

\bibitem{Alon}
N. Alon, Eigenvalues and expanders,
\emph{Combinatorica} \textbf{6} (1986) 83--96.

\bibitem{Alon1}
N. Alon, M. Krivelevich, B. Sudakov, Complete minors and average degree: a short proof,
\emph{J. Graph Theory} \textbf{103} (2023), no. 3, 599--602.

\bibitem{Ba}
L. Babai, Automorphism groups of planar graphs. II,
\emph{Colloq. Math. Soc. J\'{a}nos Bolyai} \textbf{10} (1975) 29--84.

\bibitem{BLL}
B. Bollob\'{a}s, J. Lee, S. Letzter, Eigenvalues of subgraphs of the cube,
\emph{European J. Combin.}
\textbf{70} (2018) 125--148.

\bibitem{BN}
B. Bollob\'{a}s, V. Nikiforov, Cliques and the spectral radius,
\emph{J. Comb. Theory, Ser. B} \textbf{97} (2007) 859--865.

\bibitem{BOOT}
B.N. Boots, G.F. Royle, A conjecture on the maximum value of the principal eigenvalue of a planar graph,
\emph{Geogr. Anal.} \textbf{23} (1991) 276--282.

\bibitem{CAO}
D.S. Cao, A. Vince, The spectral radius of a planar graph,
\emph{Linear Algebra Appl.} \textbf{187} (1993)
251--257.

\bibitem{Cha}
G. Chartrand, D. Geller, S. Hedetniemi, Graphs with forbidden subgraphs,
\emph{J. Combin. Theory Ser. B} \textbf{10} (1971) 12--41.

\bibitem{Zhang}
M-Z. Chen, A-M. Liu, X-D. Zhang,
The spectral radius of minor-free graphs,
\emph{European J. Combin.} \textbf{118}
(2024) 103875.

\bibitem{CRS}
M. Chudnovsky, B. Reed, P. Seymour, The edge-density for $K_{2,t}$ minors,
\emph{J. Combin. Theory Ser. B} \textbf{101} (2011) 18--46.

\bibitem{CB1}
S. Cioab\u{a}, D.N. Desai, M. Tait,
The spectral radius of graphs with no odd wheels,
\emph{European J. Combin.} \textbf{99} (2022) 103420.

\bibitem{Con}
J.H. Conway, H. Burgiel, C. Goodman-Strauss,
The symmetries of things, A K Peters, Ltd., Wellesley, MA, 2008.

\bibitem{CP}
D. Cvetkovi\'{c}, P. Rowlinson, The largest eigenvalue of a graph: a survey,
\emph{Linear Multilinear Algebra} \textbf{28} (1990) 3--33.

\bibitem{DJS}
G.L. Ding, T. Johnson, P. Seymour, Spanning trees with many leaves,
\emph{J. Graph Theory} \textbf{37} (2001) 189--197.

\bibitem{Ding}
G.L. Ding, B. Oporowski, D.P. Sanders, D. Vertigan,
Surfaces, tree-width, clique-minors, and partitions,
\emph{J. Combin. Theory Ser. B} \textbf{79} (2000), no. 2, 221--246.

\bibitem{DM}
Z. Dvo\v{r}\'{a}k, B. Mohar, Spectral radius of finite and
infinite planar graphs and of graphs of bounded genus,
\emph{J. Combin. Theory Ser. B} \textbf{100} (2010), no. 6, 729--739.

\bibitem{EZ}
M.N. Ellingham, X.Y. Zha, The spectral radius of graphs on surfaces,
\emph{J. Combin. Theory Ser. B} \textbf{78} (2000) 45--56.

\bibitem{Gh}
L. Ghidelli, On the largest planar graphs with everywhere positive combinatorial curvature,
\emph{J. Combin. Theory Ser. B} \textbf{158} (2023), no. 2, 226--263.

\bibitem{Hadwiger}
H. Hadwiger,
\"{U}ber eine Klassifikation der Streckenkomplexe,
\emph{Vierteljschr. Naturforsch. Ges. Z\"{u}rich}
\textbf{88} (1943) 133--142.

\bibitem{FENG}
X.C. He, Y.T. Li, L.H. Feng,
Spectral extremal graphs without intersecting triangles as a minor,
\emph{Electron. J. Combin.} \textbf{31} (2024), no. 3, Paper No. 3.7, 18 pp.

\bibitem{Hi}
Y. Higuchi, Combinatorial curvature for planar graphs,
\emph{J. Graph Theory} \textbf{38} (2001), no. 4, 220--229.

\bibitem{Hong1}
Y. Hong, On the spectral radius and the genus of graphs,
\emph{J. Combin. Theory Ser. B}
\textbf{65} (1995) 262--268.

\bibitem{Hong2}
Y. Hong, Upper bounds of the spectral radius of graphs in terms of genus,
\emph{J. Combin. Theory Ser. B}
\textbf{74} (1998) 153--159.

\bibitem{Hong3}
Y. Hong, Tree-width, clique-minors, and eigenvalues, \emph{Discrete Math.}
\textbf{274} (2004) 281--287.

\bibitem{Hoory}
S. Hoory, N. Linial, A. Widgerson, Expander graphs and their applications,
\emph{Bull. Amer. Math. Soc. (N.S.)}, \textbf{43} (2006) 439--561.

\bibitem{Hua2}
B.B. Hua, Y.H. Su, The set of vertices with positive curvature
in a planar graph with nonnegative curvature,
\emph{Adv. Math.} \textbf{343} (2019) 789--820.

\bibitem{Hua3}
B.B. Hua, Y.H. Su, The first gap for total curvatures of planar graphs with nonnegative curvature,
\emph{J. Graph Theory} \textbf{93} (2020), no. 3, 395--439.

\bibitem{Huang}
H. Huang, Induced subgraphs of hypercubes and a proof of the sensitivity conjecture,
\emph{ Ann. of Math.} \textbf{(2) 190} (2019), no. 3, 949--955.

\bibitem{Jiang}
Z. Jiang, On spectral radii of unraveled balls,
\emph{J. Combin. Theory Ser. B} \textbf{136} (2019) 72--80.

\bibitem{Jiang1}
Z. Jiang, J. Tidor, Y. Yao, S. Zhang, Y. Zhao, Equiangular lines with a fixed angle,
\emph{Ann. of Math.} \textbf{(2) 194} (2021), no. 3, 729--743.

\bibitem{Ked}
K.S. Kedlaya, Outerplanar partitions of planar graphs,
\emph{J. Combin. Theory Ser. B} \textbf{67} (1996) 238--248.

\bibitem{Kos}
A.V. Kostochka, The minimum Hadwiger number for graphs with a given mean degree of vertices,
\emph{Metody Diskret. Anal.} \textbf{38} (1982) 37--58.

\bibitem{Kos1}
A.V. Kostochka, Lower bound for the Hadwiger number for graphs by their average degree,
\emph{Combinatorica} \textbf{4} (1984) 307--316.

\bibitem{Lin}
H.Q. Lin, B. Ning, A complete solution to the Cvetkovi\'{c}-Rowlinson conjecture,
\emph{J. Graph Theory} \textbf{97} (2021), no. 3, 441--450.

\bibitem{LSV}
E. Lubetzky, B. Sudakov, V. Vu, Spectra of lifted Ramanujan graphs,
\emph{Adv. Math.} \textbf{227} (2011), no. 4, 1612--1645.

\bibitem {Mader}
W. Mader, Homomorphieeigenschaften und mittlere Kantendichte von Graphen,
\emph{Math. Ann.} \textbf{174} (1967) 265--268.

\bibitem{Mader1}
W. Mader, Homomorphies\"{a}tze f\"{u}r Graphen,
\emph{Math. Ann.} \textbf{178} (1968) 154--168.

\bibitem{Ma}
P. Mani, Automorphismen von polyedrischen Graphen,
\emph{Math. Ann.} \textbf{192} (1971) 279--303.

\bibitem{MY}
J.S. Myers, The extremal function for unbalanced bipartite minors,
\emph{Discrete Math.} \textbf{271} (2003) 209--222.

\bibitem{NIKI1}
V. Nikiforov, Bounds on graph eigenvalues II,
\emph{Linear Algebra Appl.} \textbf{427} (2007) 183--189.

\bibitem{NIKI}
V. Nikiforov, The spectral radius of graphs with no $K_{2,t}$-minor,
\emph{Linear Algebra Appl.} \textbf{531} (2017) 510--515.

\bibitem{NPS}
S. Norin, L. Postle, Z. Song, Breaking the degeneracy barrier for coloring graphs with no $K_t$ minor,
\emph{Adv. Math.} \textbf{422} (2023), Paper No. 109020, 23 pp.

\bibitem{RS}
N. Robertson, P.D. Seymour, Graph minors-a survey,
Surveys in combinatorics 1985 (Glasgow, 1985), 153--171,
London Math. Soc. Lecture Note Ser., 103,
Cambridge Univ. Press, Cambridge, 1985.

\bibitem{Se}
B. Servatius, H. Servatius, Symmetry, automorphisms, and self-duality of infinite planar graphs and tilings,
in: International Scientific Conference on Mathematics. Proceedings, (\v{Z}ilina, 1998), 83--116, Univ.
\v{Z}ilina, \v{Z}ilina, 1998.

\bibitem{Tait}
M. Tait, The Colin de Verdi\`{e}re parameter, excluded minors, and the spectral radius,
\emph{J. Combin. Theory Ser. A } \textbf{166} (2019) 42--58.

\bibitem{Tait1} M. Tait, J. Tobin, Three conjectures in extremal spectral graph theory,
\emph{J. Combin. Theory Ser. B} \textbf{126} (2017) 137--161.

\bibitem{Thomason1}
A. Thomason, An extremal function for contractions of graphs,
\emph{Math. Proc. Cambridge Philos. Soc.} \textbf{95} (1984) 261--265.

\bibitem{Thomason2}
A. Thomason, The extremal function for complete minors,
\emph{J. Combin. Theory Ser. B} \textbf{81} (2001)
318--338.

\bibitem {Thomason}
A. Thomason, Disjoint complete minors and bipartite minors,
\emph{European J. Combin.} \textbf{28} (2007), no. 6, 1779--1783.

\bibitem{WAGNER}
K. Wagner, \"{U}ber eine Eigenschaft der ebenen Komplexe,
\emph{Math. Ann.} \textbf{114} (1937) 570--590.

\bibitem{WANG}
B. Wang, W.W. Chen, L.F. Fang, Extremal spectral radius of $K_{3,3} / K_{2,4}$-minor free graphs,
\emph{Linear Algebra Appl.} \textbf{628} (2021) 103--114.

\bibitem{WILF}
H. Wilf, Spectral bounds for the clique and independence numbers of graphs,
\emph{J. Comb. Theory, Ser. B} \textbf{40} (1986) 113--117.

\bibitem{WU}
B.F. Wu, E.L. Xiao, Y. Hong, The spectral radius of trees on $k$ pendant vertices, \emph{Linear
Algebra Appl.} \textbf{395} (2005) 343--349.

\bibitem{ZL}
M.Q. Zhai, H.Q. Lin, Spectral extrema of $K_{s,t}$-minor
free graphs--On a conjecture of M. Tait, \emph{J. Combin. Theory, Ser. B}
\textbf{157} (2022) 184--215.

\bibitem{ZHAI}
M.Q. Zhai, B. Wang, Proof of a conjecture on the spectral radius of $C_4$-free graphs,
\emph{Linear Algebra Appl.} \textbf{437} (2012) 1641--1647.
\end{thebibliography}
\end{document}